\documentclass[a4paper,11pt, oneside]{article}

\usepackage[english]{babel}
\usepackage[T1]{fontenc}
\usepackage{enumerate}
\usepackage{amsmath}
\usepackage{amsthm}
\usepackage{amssymb}
\usepackage{subfig}
\usepackage{tikz}

\newcommand{\Qp}{\mathbb{Q}_p}

\newcommand{\Zp}{\mathbb{Z}_p}

\newcommand{\Fp}{\mathbb{F}_p}
\newcommand{\Cp}{\mathbb{C}_p}

\newcommand{\Q}{\mathbb{Q}}
\newcommand{\R}{\mathbb{R}}

\newcommand{\C}{\mathbb{C}}
\newcommand{\Z}{\mathbb{Z}}
\newcommand{\N}{\mathbb{N}}

\newcommand{\rightleftarrow}{\leftrightarrow}

\newtheorem{theorem}{Theorem}[section]

\newtheorem{prop}[theorem]{Proposition}

\newtheorem{lemma}[theorem]{Lemma}
\newtheorem{cor}[theorem]{Corollary}
\newtheorem{definition}[theorem]{Definition}
\newtheorem*{Fact}{Fact}
\newtheorem{claim}{Claim}

\theoremstyle{remark}
\newenvironment{Remark}{\begin{trivlist}\item[\hskip \labelsep {\bfseries Remark.}]}{\end{trivlist}}
\newenvironment{acknowledgement}{\begin{trivlist}\item[\hskip \labelsep {\bfseries Acknowledgements.}]}{\end{trivlist}}
\newtheorem{Exp}{Example}[section]

\newenvironment{notation}{\begin{trivlist}\item[\hskip \labelsep {\bfseries Notations.}]}{\end{trivlist}}

\newcommand{\Wsyst}[1]{ [\![{#1}]\!]}

\title{Effective model-completeness for $p$-adic analytic structures}
\author{Nathana\"el Mariaule}
\date{}
\begin{document}
\maketitle

\begin{abstract}In \cite{Denef-vdD}, J. Denef and L. van den Dries prove that the theory of the ring of $p$-adic integers admits the elimination of quantifiers in the language of $p$-adic restricted analytic functions expanded by a division symbol. In this paper, we are interested in restrictions of this language: Let $F$ be any family of $p$-adic restricted analytic functions, we construct an expansion of $F$ so that the theory of the ring of $p$-adic integers is model-complete in the corresponding language. Next, we give conditions on $F$ so that the model-completeness is effective. Finally, we apply our results in the context of the $p$-adic exponential ring. 
\end{abstract}

\section{Introduction}

\par Let $\mathcal{L}_{an}$ be the expansion of the language of rings by unary predicates for the set of $n$th powers and function symbols for all restricted $p$-adic analytic function (i.e.\ functions defined by power series convergent on $\Zp$). The model theory of $\Zp$ in this language was first considered by J. Denef and L. van den Dries in \cite{Denef-vdD}. In particular, they proved that this theory admits the elimination of quantifiers if we expand $\mathcal{L}_{an}$ by a symbol of division $D$. $\mathcal{L}_{an}$-definable sets are the $p$-adic subanalytic sets. In this paper, we consider reduct of this language i.e.\ let $F$ be any family of  restricted analytic functions, we consider $\mathcal{L}_F$ the expansion of the language of rings by function symbols for each $f\in F$ and predicates for the set of $n$th powers. 

\par A careful inspection of \cite{Denef-vdD} (one needs some new arguments) gives the following: if $F$ is a Weierstrass system (meaning roughly that it is closed under Weierstrass division, see section \ref{Weierstrass system}), then the theory of $\Zp$ admits quantifier elimination in the language $\mathcal{L}_F$ expanded by the symbol of division. This was developed in \cite{Cluckers-Lipshitz} in much greater generality. We recall this in section \ref{Weierstrass system} and adapt the result to our setting.
\par For general $F$, it seems very unlikely that the theory of $\Zp$ in this language also admits quantifier elimination. In this paper, we give conditions so that this theory admits the next best thing after quantifier elimination: strong model-completeness.

\par A. Macintyre \cite{Macintyre4} proved a model-completeness result for $F=\{(1+p)^x\}$.
In this case, $\mathcal{L}_F$ induces a structure of exponential ring on $\Zp$ (which can be thought as the $p$-adic equivalent of the structure $(\R,+, -, \cdot, 0,1,<, exp\mathord{\upharpoonright}_{[-1,1]})$). A. Wilkie proved that the theory the field of reals with restricted exponentiation is model-complete in \cite{Wilkie}. In the $p$-adic case, the ideas of the proof of Macintyre goes back to  \cite{vdDries4}.
\par In this paper \cite{vdDries4}, L. van den Dries proves the strong model-completeness of the structure $(\R, +, -, \cdot,$ $ exp\mathord{\upharpoonright}_{[-1,1]}, \sin\mathord{\upharpoonright}_{[-1,1]}, \cos\mathord{\upharpoonright}_{[-1,1]})$. A key property is that the structure $(\C, +, -, \cdot,$ $ exp,$ $ \sin, \cos)$ is definable in $(\R, +, -, \cdot,$ $ exp, \sin, \cos)$ (where all functions are restricted to a compact set). In the $p$-adic case, the algebraic closure is an extension of infinite degree and therefore is not definable. But it is sufficient to interpret the natural structure attached to the valuation ring of any finite algebraic extension i.e.\ we want the structure $(V,+, -, \cdot, 0,1, f; f\in F)$ to be $\mathcal{L}_F$-definable in $\Zp$ for any $V$ valuation ring of a finite algebraic extension $K$. In general it may not be the case and so we expand $F$ by a family $\widetilde{F}$ of functions called the \emph{decomposition functions} so that $(V,+, -, \cdot, 0,1, f; f\in \widetilde{F})$  becomes definable in $(\Zp,+, -, \cdot, 0,1, f; f\in \widetilde{F})$ (see section \ref{Decomposition functions}). For this expansion of the language, finite algebraic extensions are definable and the proof of model-completeness follows roughly by mimicking the proof in the real case. The first main theorem is:
{
\renewcommand{\thetheorem}{\ref{strong model-completeness}}
\begin{theorem}
Let $F$ be a family of restricted analytic functions. Assume that the set of $\mathcal{L}_F$-terms is closed under derivation. Let $\widetilde{F}$ be the extension of $F$ by the decomposition functions of $f$ for each $f\in F$. Then, $\mathbb{Z}_{p,\widetilde{F}}$ is strongly model-complete in $\mathcal{L}_{\widetilde{F}}$.
\end{theorem}
\addtocounter{theorem}{-1}
} 

\par The main idea of the proof is to construct a Weierstrass system $W_F$ generated by $F$.  We give the definition of this Weierstrass system in section \ref{Weierstrass system generated by}. If $F$ is closed under decomposition function, we show that any element $f\in W_F$ is strongly definable in $\mathcal{L}_F$ under the condition that the set of $\mathcal{L}_F$-terms is closed under derivation. This will be done in section \ref{Strong model-completeness}. We combine these existential definitions with the quantifier elimination in the language with all functions of the Weierstrass system $W_F$ to get the result of strong model-completeness in Theorem \ref{strong model-completeness}.

\par In the second part of this paper, we are interested in decidability issues. We consider the following problem: to give conditions on $F$ so that Theorem \ref{strong model-completeness} is effective i.e.\ there is an algorithm which takes for entry a formula in our language and returns an existential formula equivalent to it.
\par Most of the proof of Theorem \ref{strong model-completeness} is already effective but there is some issues related to the following problem: let $f=\sum a_I(\overline{X})\overline{Y}^I$ in the Weierstrass system $W_F$. Let $\mathcal{I}$ be the ideal generated by the elements $a_I$ in $\Zp\{\overline{X}\}$, the ring of restricted power series. We need to determine an integer $d(f)$ so that $\mathcal{I}$ is generated by $d(f)$ elements (we will also need some additional properties that will appear in section \ref{Weierstrass system}). 

\par In \cite{Denef-vdD}, they obtain this bound $d(f)$ as $\Zp\{\overline{X}\}$ is Noetherian. The existence is also guaranteed in our case but we need an explicit computation. Let $g=\sum b_i(\overline{X})Y^i\in W_F$. It is known that for all $\overline{x}\in\Zp^k$, there is a bound $S(g)$ on the number of zeros (counting multiplicities) of $g(\overline{x},Y)$ in the valuation ring of $\Cp$ independent of the choice of $\overline{x}$ whenever this number is finite (it is a consequence of Weierstrass preparation theorem in the style of \cite{Denef-vdD}). In section \ref{Weierstrass system}, we show that $d(f)$ can be determined in an effective way in terms of $S(h_1),\cdots, S(h_k)$ for some $h_i\in W_F$ constructed from $f$ in a explicit way.

\par In section \ref{Effective Weierstrass system}, we discuss this issue and show that it is sufficient to compute $S(g)$ for all $g\in W_F$. Furthermore, we will see that $S(g)$ is determined by the number of zeros of a system of equations of $\mathcal{L}_F$-terms. Given such a system, we will prove a counting point theorem in section \ref{Effective bound} using results of tropical analytic geometry due to J. Rabinoff \cite{Rabinoff}. This counting point theorem relates the number of zeros of a system to the integers $d(h)$ where $h$ are now terms. So, assuming that we can compute $d(h)$ for all $h$ terms in our language, we can compute $d(f)$ for all $f\in W_F$. One needs further assumptions on $F$ to apply the counting point theorem: first, we require that any $f\in F$ is overconvergent i.e.\ it is convergent on a ball $B$ that strictly contains the valuation ring (in $\Cp$). Second, we ask that $d(\widetilde{f})$ is computable where $\widetilde{f}(\overline{x})=f(t\overline{x})$ for $t\in B$ of negative valuation. In that case, we say that $W_F^{(0)}$ has an extended effective Weierstrass bound. Under these assumptions, we prove:
{
\renewcommand{\thetheorem}{\ref{effective model completeness of L_F}}
\begin{theorem}Let F be an effective family of restricted analytic functions such that the set of $\mathcal{L}_F$-terms is closed under derivation. Let $\widetilde{F}$ be the extension of $F$ by all decomposition functions of elements in $F$. Assume that each $\mathcal{L}_{\widetilde{F}}$-term is overconvergent and that $W_F^0$ has an extended effective Weierstrass bound.
\par Then, the theory of $\mathbb{Z}_{p,\widetilde{F}}$ is effectively strongly model-complete in the language $\mathcal{L}_{\widetilde{F}}$. 
\end{theorem}
\addtocounter{theorem}{-1}
} 

\par Finally, in section \ref{exponential ring} we apply our result in the case $F=\{(1+p)^x\}$. In that case, the decomposition functions are polynomial combinations of exponential terms of type $e^{\alpha^i x}$ where $\alpha$ is in a suitable algebraic extension of $\Qp$. By our first main theorem, the theory of the ring of $p$-adic integers in the language of exponential rings extended by these decomposition functions is model-complete. That particular case was proved by A. Macintyre in \cite{Macintyre4}. We extend this model-completeness : We prove that this theory is effectively model-complete. In a next paper, the author will prove that this theory is decidable assuming a $p$-adic version of Schanuel's conjecture.

\begin{notation}Within this text, $\Qp$ will denote the field of $p$-adic numbers. We will denote the $p$-adic valuation by $v$. $\Cp$ will denote the $p$-adic completion of the algebraic closure of $\Qp$ and $\mathcal{O}_p$ its valuation ring. Given a ring $A$, we denote the set of nonzero elements by $A^*$ and the set of units by $A^\times$. If $K$ is a field, we denote its algebraic closure by $K^{alg}$.  The set of restricted power series is denoted by 
$$\Zp\{\overline{X}\}=\left\{\sum_I a_I \overline{X}^I\left|\vphantom{\Big\{}\right. a_I\in  \Zp,\ v(a_I)\rightarrow \infty \right\} $$
where $\overline{X}=(X_1,\cdots, X_n)$ and we use multi-index notation.
\end{notation}

\section{Weierstrass system and quantifier elimination}\label{Weierstrass system}

\begin{definition}
Let $f(\overline{X},Y)\in\Zp\{\overline{X},Y\}$. Let $\pi$ be the canonical projection $\Zp\{\overline{X},Y\}\rightarrow \Fp[\overline{X},Y]$. We say that $f$ is \emph{regular in $Y$ of order $d$} if $\pi(f(\overline{X},Y))$ is a monic polynomial in $Y$ of degree $d$. We say that $f(\overline{X},\overline{Y})\in\Zp\{\overline{X},\overline{Y}\}$ is \emph{preregular} of order $K$ in $\overline{Y}$ if
$$\pi(f(\overline{X},\overline{Y}))=\sum_{I\leq K} a_I(\overline{X})\overline{Y}^I$$
and  $a_K(\overline{X})=1$ ($\leq$ is the lexicographic order on $\N^n)$.
\end{definition}

\begin{definition}\label{def W-sys}
A \emph{Weierstrass system} over $\Zp$ is a family of rings $\Zp\Wsyst{X_1,\cdots,X_n}$, $n\in\N$, such that for all $n$, the following conditions hold: 
\begin{enumerate}
\item $\Z[\overline{X}]\subseteq \Zp\Wsyst{\overline{X}}\subseteq \Zp\{\overline{X}\}$;
\item For all permutations $\sigma$ of $\{1,\cdots, n\}$, if $f(\overline{X})\in \Zp\Wsyst{\overline{X}}$, then $f(X_{\sigma(1)},\cdots, X_{\sigma(n)})\in \Zp\Wsyst{\overline{X}}$;
\item If $f\in \Zp\Wsyst{\overline{X}}$ has an inverse $g$ in $\Zp\{\overline{X}\}$, then $g\in \Zp\Wsyst{\overline{X}}$;
\item Let $g \in \Zp\Wsyst{\overline{X}}$. If $f\in \Zp\Wsyst{\overline{X}}$ is divisible by $g(0)$ in $\Zp\{\overline{X}\}$, then $f/g(0)\in \Zp\Wsyst{\overline{X}}$;
\item (Weierstrass division) If $f\in \Zp\Wsyst{X_1,\cdots , X_{n+1}}$ and $f$ is regular of order $d$ in $X_{n+1}$, then, for all $g\in \Zp\Wsyst{X_1,\cdots , X_{n+1}}$, there are $A_0,\cdots, A_{d-1}\in \Zp\Wsyst{\overline{X'}}$ (where $\overline{X'}=(X_1,\cdots , X_n)$) and $Q\in \Zp\Wsyst{\overline{X}}$  such that
$$ g(\overline{X}) = Q(\overline{X})\cdot f(\overline{X}) + \Big(X_{n+1}^{d-1}A_{d-1}(\overline{X'})+\cdots + A_0(\overline{X'})\Big).$$  
\end{enumerate}
\end{definition}
\begin{Remark}
\begin{enumerate}
	\item More general definitions of Weierstrass system can be found in the literature e.g.\ \cite{Cluckers-Lipshitz}. One want to have quantifier elimination in these languages (expanded by division symbols).  Note that in \cite{Cluckers-Lipshitz}, the Weierstrass system are required to have the so-called \emph{Strong Noetherian property}. This property is crucial for quantifier elimination. In our case, this property is always true as we will see later in this section (strong Noetherian property is implied by Proposition \ref{c-prop 1}). The author is not aware of any general setting where we would have quantifier simplification in the style of Theorem \ref{strong model-completeness}.
	\item Let $W$ be a Weierstrass system. Then, by closure under Weierstrass division, it follows that $W$ is closed under derivation and composition. 
Let $f(X),g(Y)\in W$ (we only prove the one variable case, the generalization should be obvious). Then, $H^2$ (resp. X-g(Y)) are regular in H of order 2 (resp. in X of order 1). So, by Weierstrass division,
$$f(X_1+H)= Q(X,H)H^2+[A_1(X)H+A_0(X)] $$
$$f(X) = Q'(X,Y)(X-g(Y)) + R(Y). $$
Identifying coefficients, one see that $A_1(X)=f'(X)$. Replacing $X$ by $g(Y)$ in the second equality shows that $f(g(Y))=R(Y)$. Therefore, $f'(X), f(g(Y))\in W$.
\end{enumerate}
\end{Remark}

 Let $\mathcal{L}_{Mac}=(+, -, \cdot,0,1,P_n; n\in \N)$ be the Macintyre's language for $p$-adically closed fields i.e.\ $+,-, \cdot,0,1$ are interpreted in $\Zp$ by the natural operations and $P_n$ is a unary predicate for the set of $n$th powers i.e.\
$$\Zp \vDash P_n(x)\mbox{ iff } \exists y\in \Zp\ x=y^n.$$
\par Fix a Weierstrass system $W=(\Zp\Wsyst{X_1,\cdots,X_n})_{n\in \N}$. Let $\mathcal{L}_{W}$ be the extension of the language $\mathcal{L}_{Mac}$  by function symbols $f$ for each $f\in \Zp\Wsyst{ X_1,\cdots,X_n}$ and $\mathcal{L}_{W}^D$ be the expansion of $\mathcal{L}_{W}$ by a division symbol $D$ interpreted in $\Zp$ by:
$$D(x,y)=\left\{\begin{array}{ll}
x/y&\mbox{if }v(x)\geq v(y)\mbox{ and } y\not=0\\
0 &\mbox{otherwise.}
\end{array}\right.$$
Let $\mathbb{Z}_{p,W}$ (resp. $\mathbb{Z}_{p,W}^D$) be the structure with underlying set $\Zp$ and natural interpretations for the symbol of $\mathcal{L}_W$ (resp. $\mathcal{L}_W^D$).
Then,
\begin{theorem}\label{W-EQ}
The theory of $\mathbb{Z}_{p,W}^D$  admits the elimination of quantifiers. 
\end{theorem}

\begin{definition}
Let $\mathcal{M}$ be a $\mathcal{L}$-structure with underlying set $M$. We say that $\mathcal{M}$ is \emph{strongly model-complete} if for any $\mathcal{L}$-formula $\Psi (\overline{y})$, there is an existential $\mathcal{L}$-formula $\exists \overline{x} \Phi (\overline{x},\overline{y})$, where $\Phi$ is quantifier-free, such that for all $\overline{a}\in M^n$, 
$$\mathcal{M} \vDash  \Psi (\overline{a}) \rightleftarrow \exists \overline{x} \Phi (\overline{x},\overline{a}),$$
and furthermore, for each $\overline{a}$ such that $M\vDash \Psi(\overline{a})$, there is a unique tuple $\overline{b}$ in $M^m$ such that $M\vDash \Phi(\overline{b},\overline{a})$.
\par A set $X$ is \emph{strongly definable} if 
$$X= \{ \overline{a}\in M^n \mid\ \mathcal{M}\vDash \exists \overline{y} \Phi ( \overline{a}, \overline{b}, \overline{y})\},$$ 
and, for each $\overline{a}\in X$, there is a unique tuple $\overline{c}$ in $M^m$ such that $M\vDash \Phi ( \overline{a}, \overline{b}, \overline{c})$.
 A function is strongly definable if its graph and the complement of its domain are strongly definable. A structure is strongly definable if its domain as well as functions, relations and constant symbols of the language are strongly definable.
\end{definition}  
\par Note that  the graph of the function $D$ is strongly definable in $\mathcal{L}_W$. So, as an immediate corollary of the above theorem, we have
\begin{cor}The theory of $\mathbb{Z}_{p,W}$  is strongly model-complete. 
\end{cor}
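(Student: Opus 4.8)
The plan is to transfer the quantifier elimination of Proposition~\ref{W-EQ} from $\mathbb{Z}_{p,W}^D$ down to its reduct $\mathbb{Z}_{p,W}$, eliminating the division symbol $D$ at the cost of finitely many \emph{uniquely witnessed} existential quantifiers.

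First I would take an arbitrary $\mathcal{L}_W$-formula $\Psi(\overline{y})$, regard it as an $\mathcal{L}_W^D$-formula, and apply Proposition~\ref{W-EQ} to obtain a quantifier-free $\mathcal{L}_W^D$-formula $\theta(\overline{y})$ with $\mathbb{Z}_{p,W}^D\models\forall\overline{y}\,(\Psi\leftrightarrow\theta)$. Since $\mathbb{Z}_{p,W}$ is the reduct of $\mathbb{Z}_{p,W}^D$ to $\mathcal{L}_W$, we also have $\mathbb{Z}_{p,W}\models\Psi(\overline{a})\Leftrightarrow\mathbb{Z}_{p,W}^D\models\theta(\overline{a})$ for every $\overline{a}$. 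The only feature preventing $\theta$ from being an $\mathcal{L}_W$-formula is the presence of $D$ inside its terms.

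Next I would unnest the occurrences of $D$. List the subterms of $\theta$ of the form $D(\sigma,\tau)$, ordered so that each appears after all $D$-subterms it properly contains, say $D(\sigma_1,\tau_1),\dots,D(\sigma_k,\tau_k)$, and pick fresh variables $z_1,\dots,z_k$. Replacing, innermost first, every occurrence of the $j$-th such subterm by $z_j$ turns $\sigma_j,\tau_j$ into $\mathcal{L}_W$-terms in $(\overline{y},z_1,\dots,z_{j-1})$ and turns $\theta$ into a quantifier-free $\mathcal{L}_W$-formula $\theta^{\ast}(\overline{y},z_1,\dots,z_k)$. By the Remark preceding the statement, the graph of $D$ is strongly definable in $\mathcal{L}_W$: there is a quantifier-free $\mathcal{L}_{Mac}$-formula $\delta(u,v,w,s)$ such that $D(a,b)=c\Leftrightarrow\exists s\,\delta(a,b,c,s)$ for all $a,b,c\in\Zp$, the witness $s$ being unique. (Concretely one may take $\delta(u,v,w,s)$ to be $(v\neq 0\wedge wv=u\wedge s=0)\vee(v=0\wedge w=0\wedge s=0)\vee(v\neq 0\wedge u\neq 0\wedge w=0\wedge v=psu)$; a short case analysis on $v(u),v(v)$ shows that for each $(u,v)$ exactly one disjunct is satisfiable and that it determines $(w,s)$ uniquely.) Then, over $\mathbb{Z}_{p,W}$,
$$\Psi(\overline{y})\ \longleftrightarrow\ \exists z_1\cdots\exists z_k\,\exists s_1\cdots\exists s_k\,\Big(\bigwedge_{j=1}^{k}\delta(\sigma_j,\tau_j,z_j,s_j)\ \wedge\ \theta^{\ast}(\overline{y},z_1,\dots,z_k)\Big),$$
the forward direction using the true values of the $D$-subterms and their witnesses, and the converse because the conjuncts $\delta(\sigma_j,\tau_j,z_j,s_j)$ force $z_j=D(\sigma_j,\tau_j)$, so that $\theta^{\ast}$ collapses to $\theta$. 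This exhibits $\Psi$ as an existential $\mathcal{L}_W$-formula $\exists\overline{x}\,\Phi(\overline{x},\overline{y})$ with $\Phi$ quantifier-free, where $\overline{x}=(z_1,\dots,z_k,s_1,\dots,s_k)$.

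Finally I would verify the uniqueness clause. If $\mathbb{Z}_{p,W}\models\Psi(\overline{a})$ and $(\overline{z},\overline{s})$ satisfies $\Phi(\overline{x},\overline{a})$, then by induction on $j$ the conjunct $\delta(\sigma_j,\tau_j,z_j,s_j)$ says that $z_j$ equals $D$ evaluated at the already-determined values $\sigma_j(\overline{a},\overline{z}_{<j}),\tau_j(\overline{a},\overline{z}_{<j})$, so $z_j$ is forced, and then $s_j$ is forced by uniqueness of the witness to the graph of $D$. Hence $(\overline{z},\overline{s})$ is unique, and $\mathbb{Z}_{p,W}$ is strongly model-complete. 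I expect the only genuinely delicate point to be the unnesting step: ordering nested occurrences of $D$ correctly, checking the displayed equivalence, and — so that the auxiliary witnesses $s_j$ are unique rather than arbitrary — ensuring that $\delta$ pins down a canonical value (here $s=0$) in the branches where the value of $s$ is otherwise irrelevant. The rest is purely formal.
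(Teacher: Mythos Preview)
Your proposal is correct and follows exactly the approach the paper sketches: the paper merely remarks that the graph of $D$ is strongly definable in $\mathcal{L}_W$ and declares the corollary immediate from Proposition~\ref{W-EQ}, whereas you have carefully unpacked this by unnesting the $D$-subterms and exhibiting an explicit strong definition $\delta$ of the graph of $D$. Your concrete $\delta$ and the uniqueness verification are fine; the only thing worth noting is that the paper treats all of this as a one-line observation, so your level of detail exceeds what the paper provides rather than diverging from it.
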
 

We give now a proof of the theorem.
\begin{proof}
We follow the proof of quantifier elimination in \cite{Denef-vdD}.
By (1.3) from \cite{Denef-vdD}, it is sufficient to prove the following : for all $\Phi (\overline{X},Y_1,\cdots, Y_n)$ quantifier-free $\mathcal{L}_{W}$-formula, there exists $\Psi(\overline{X},Z_1,\cdots , Z_{n-1})$ quantifier-free $\mathcal{L}_W^D$-formula such that 
\begin{itemize}
	\item $\Zp\vDash (\exists \overline{Y}\ \Phi (\overline{X},\overline{Y}))\rightleftarrow (\exists \overline{Z}\ \Psi(\overline{X},\overline{Z}))$;
	\item In $\Psi$ the symbol $D$ is only applied to terms not involving the variables $\overline{Z}$.
\end{itemize} 
Let $\Phi (\overline{X},Y_1,\cdots, Y_n)$ be a quantifier-free $\mathcal{L}_{W}$-formula. Let $f(\overline{X},\overline{Y})=\sum_I a_I(\overline{X})\overline{Y}^I$ in $W$ occurring in $\Phi$. For convenience of the reader we recall the following lemma from \cite{Denef-vdD} :
\begin{Fact}[Lemma 1.4 in \cite{Denef-vdD}]\label{Denef-vdD lemma 1.4 bis}
Let $f(\overline{X},\overline{Y})=\sum a_I(\overline{X})\overline{Y}^I\in \Zp\{\overline{X},\overline{Y}\}$. Then, there is $d\in \N$ such that, for all $I$ with $|I|\geq d$ (where $|I|=i_1+\cdots+i_m$),
$$a_I(\overline{X}) = \sum_{|J|<d}b_{IJ}(\overline{X}) a_J(\overline{X}), $$
where $b_{IJ}(\overline{X})\in \Zp\{\overline{X} \}$ with $\|b_{IJ}(\overline{X})\|<1$ (where $\|\sum c_I\overline{X}^I\|=\min \{v(c_I)\}$) and $\|b_{IJ}\|\rightarrow 0$ as $|I|\rightarrow \infty$.
\end{Fact}
 Let $Z_f(\overline{X})$ be the formula $$\bigwedge_{|I|<d} a_I(\overline{X})=0,$$
with $d$ given by the above Fact.
Fix $\overline{x}\in \Zp^m$. Note that $\Zp \vDash Z_f(\overline{x}) \rightleftarrow (\forall \overline{Y}\ f(\overline{x},\overline{Y})=0)$. If $Z_f(\overline{x})$ does not hold then there is $J$ ($|J|<d$) such that $\mu_{J,f}(\overline{x})$ holds where $\mu_{J,f}$ is the formula:
$$a_J(\overline{X})\not=0\wedge \bigwedge_{I<J, |I|<d} |a_I(\overline{X})|\leq |a_J(\overline{X})| \wedge \bigwedge_{J<I, |I|<d}  |a_I(\overline{X})|< |a_J(\overline{X})|.$$
Assume that $\Zp\vDash \mu_{I,f}(\overline{x})$. Then by Fact \ref{Denef-vdD lemma 1.4 bis}, there are $b_{IJ}\in p\Zp\{\overline{X},\overline{Y}\}$ such that 
\begin{align*}
f=& \sum_{I<J, |I|<d} a_I\overline{Y}^I + a_J\overline{Y}^J + \sum_{J<I, |I|<d} a_I\overline{Y}^I\\
	&	 \sum_{|I|\geq d}\left\{\sum_{K<J, |I|<d} b_{IK}a_K + b_{IJ}a_J + \sum_{J<K, |K|<d} b_{IK}a_K\right\}\overline{Y}^I.
\end{align*}
Then one can divide by $a_J$ and replace the quotients $a_I/a_I$ by new variables $V_I$ or $pV_I$ according to whether $I<J$ or $J<I$. One can define a series $\widetilde{f}$
\begin{align*}
\widetilde{f}=& \sum_{I<J, |I|<d} V_I\overline{Y}^I + \overline{Y}^J + \sum_{J<I, |I|<d} pV_I\overline{Y}^I\\
	&	 \sum_{|I|\geq d}\left\{\sum_{K<J, |I|<d} b_{IK}V_K + b_{IJ} + \sum_{J<K, |K|<d} b_{IK}pV_K\right\}\overline{Y}^I.
\end{align*}
such that $\Zp\vDash \mu_{J,f}(\overline{x})\rightarrow f(\overline{x},\overline{Y})=a_J(\overline{x})\widetilde{f}(\overline{x}, \overline{v}(\overline{x}),\overline{Y})$, where $v_I(\overline{x})=D(a_I(\overline{x}),a_J(\overline{x}))$ if $I<J$ and $v_I(\overline{x})=D(a_I(\overline{x}),pa_J(\overline{x}))$ otherwise. Note that so far we have that $\widetilde{f}, b_{IJ}\in \Zp\{\overline{X},\overline{V},\overline{Y}\}$ but we will prove in Lemma \ref{Weierstrass Denef-vdD lemma 1.4} that we can take $\widetilde{f}, b_{IJ}\in W$.
\par By construction, $\widetilde{f}$ is preregular of order $J$ in $\overline{Y}$. Then up to a change of variables $T$ determined by $T(Y_i)=Z_i+Z_n^{d^n-i}$ if $i<n$ and $T(Y_n)=Z_n$, we have that $T(\widetilde{f})$ is regular of order $E:=j_n+j_{n-1}d+\cdots +j_1d^{n-1}$. Therefore by Weierstrass preparation theorem there exist $U, A_0,\cdots, A_{E-1}\in W$ such that 
$$T(\widetilde{f})= U(Z_n^E+A_{E-1}Z_n^{E-1}+\cdots + A_0).$$
This latter function is polynomial in $Z_n$. So, using similar transformations for any function in $\Phi$ (one can take $d$ independent of the choice of $f$ in $\Phi$), it allows to apply quantifier elimination for $p$-adically closed fields and to eliminate the quantifier attached to $Z_n$. We refer to \cite{Denef-vdD} (1.5) for the details.
\par Let us remark that except for the use of Fact \ref{Denef-vdD lemma 1.4 bis} and for the definition of $\widetilde{f}$, we only use the properties 1-5 of the definition of Weierstrass system. We will prove in the next lemma that we may assume that $\widetilde{f}$ and the $b_{IJ}$'s belong to the Weierstrass system $W$. This will complete the proof of the theorem.
\end{proof}

\begin{prop}\label{Weierstrass Denef-vdD lemma 1.4}
Let $f(\overline{X},\overline{Y})=\sum a_I(\overline{X})\overline{Y}^I\in \Zp\Wsyst{\overline{X},\overline{Y}}$. Then, there is $d\in \N$ such that, for all $I$ with $|I|\geq d$ (where $|I|=i_1+\cdots+i_m$),
$$a_I(\overline{X}) = \sum_{|J|<d}b_{IJ}(\overline{X}) a_J(\overline{X}), $$
where $b_{IJ}(\overline{X})\in \Zp\Wsyst{\overline{X} }$ with $\|b_{IJ}(\overline{X})\|<1$ (where $\|\sum c_I\overline{Y}^I\|=\min \{v(c_I)\}$) and $\|b_{IJ}\|\rightarrow 0$ as $|I|\rightarrow \infty$.
Furthermore, let $\widetilde{f}$ as defined above.

Then, $\widetilde{f}\in \Zp\Wsyst{\overline{X},\overline{Y},\overline{V}}$.

\end{prop}

The proof is based on \cite{Cluckers-Lipshitz}. In fact, it follows from the next Lemma \ref{c-lemma 1} together with Lemma 4.2.14 and Theorem 4.2.15 from \cite{Cluckers-Lipshitz}. On the other hand, in the second part of the paper we will be interested in an effective version of Theorem \ref{W-EQ}. For, it will be crucial to have a computable bound for $d$ in the above proposition. 
So we will pay special attention to the relation between the $d$ produced by the lemma and the following constant:
\begin{definition}
Let $f(\overline{X},Y)\in \Zp\{\overline{X},Y\}$. Then $S(f)$ denotes a constant so that for all $\overline{x}\in \mathcal{O}_p$ either $f(\overline{x},Y)$ is identically zero or has less than $S(f)$ roots in $\mathcal{O}_p$ (counting multiplicities).
\end{definition}
In particular, we will see that the $d$ we obtained can be bounded by a constant $K(f)$ that depends only on $S(f), S(g_1),\cdots, S(g_l)$ for some functions $g_1,\cdots, g_l$ constructed from $f$ (in the Weiertrass system). So, in the above proposition, we can take $d=K(f)$. Indeed, let $d$ obtained from the proposition and $d'>d$. Take ${b'}_{IJ}=0$ if $|I|\geq d'$ and $d'>|J|\geq d$ and ${b'}_{IJ}=b_{IJ}$ if $|I|\geq d'$ and $|J|<d$. Then the series  ${b'}_{IJ}$ satisfies all requirement of the first part of the proposition. So we may assume $d=K(f)$.

\begin{claim}\label{c-claim 1}
Let $a_1,\cdots, a_n\in \Zp$ with $v(a_i)=v(a_j)$ for all $i,j$. Then, there is $t\in \mathcal{O}_p$ with $v(t)=0$ such that $v(\sum a_it^i)=v(a_j)$.
\end{claim}
\begin{proof}
Without loss of generality, we may assume that $v(a_i)=0$. Assume that the claim is false i.e.\ that for all $t\in \mathcal{O}_p$, $v(\sum a_it^i)>0$. Then, $\sum Res(a_i)t^i=0$ for all $t\in \Fp^{alg}$. So, $Res(a_i)=0$ which contradicts the assumption that $v(a_i)=0$.
\end{proof}
\begin{claim}\label{c-claim 2}
If $f(\overline{X})\in\Zp\{\overline{X}\}$, $\|f\|=1$. Then for all $\overline{x}\in\Zp^n$ with $v(\overline{x})=0$ there is $\overline{t}\in \mathcal{O}_p^n$ with $v(\overline{t})=1$ such that $v(f(\overline{xt}))=0$.
\end{claim}
\begin{proof}
Let $f(\overline{X})=\sum a_I\overline{X}^I$ with $\|f\|=1$. As $f\in\Zp\{\overline{X}\}$ there is a finite set $\mathcal{K}$ such that $v(a_I)=0$ for all $I\in \mathcal{K}$ and $v(a_I)>0$ for all $I\notin \mathcal{K}$. Let $\overline{x}\in\Zp^n$ such that $v(\overline{x})=0$. Then by claim \ref{c-claim 1}, there is $\overline{t}\in \mathcal{O}_p^n$ with $v(\overline{t})=0$ such that
$$v\left( \sum_{I\in\mathcal{K}} a_I\overline{tx}^I\right) = v(a_I\overline{tx}^I)=v(a_I)=0. $$
So, $v(f(\overline{tx}))=0$.
\end{proof}

\par First, we start with the special case of Proposition \ref{Weierstrass Denef-vdD lemma 1.4} where no parameters are involved i.e.\ $|\overline{X}|=0$. In the next lemma, the functions $b_{IJ}$'s are given by the coefficients of the functions $g_J$'s (in that case the series $b_{IJ}$ are just constants in $\Zp$).
\begin{lemma}\label{c-lemma 1}
Let $f(\overline{Y})=\sum a_I\overline{Y}^I\in \Zp\Wsyst{\overline{Y}}$. Then, there is $A(f)$ such that for all $J$ such that $|J|<A(f)$, there is $g_J\in \Zp\Wsyst{\overline{Y}}$ with
$$f=\sum_{|J|<A(f)} a_J\overline{Y}^J(1+pg_J(\overline{Y})).$$
Furthermore, $A(f)$ can be bounded in terms of $S(f), S(h_1),\cdots, S(h_l)$ where $h_i$ is obtained from $f$ by derivation or by composition with a polynomial.
\end{lemma}

\begin{proof}
By induction on $n:=|\overline{Y}|$:
\par If $n=1$: $f=\sum_i a_iY^i$. Then, there is $D$ so that for all $i>D$ and $j<D$, $v(a_D)\leq v(a_j)$ and $v(a_D)<v(a_i)$. So,
$$f=a_0+a_1Y+\cdots + a_DY^D(1+\frac{a_{D+1}}{a_D}Y+\cdots).$$
And, by definition of $D$, $v\left(\frac{a_{D+k}}{a_D}\right)>0$ for all $k>0$. Take $A(f)=D+1$,
$$g_i=0 \mbox{ for $i<D$, }g_D= \frac{f-\sum_{i\leq D} a_iY^i}{pa_DY^D}.$$
Note that by Strassmann's theorem (which states that, for $D$ as defined above, $f$ has $D$ roots in $\mathcal{O}_p$ (counting multiplicities), see \cite{Robert} section VI 2.1 for instance), $A(f)\leq S(f)+1$.
\par If $f=\sum a_I\overline{Y}^I$. Let $I$ so that for all $J$ such that $j_k>i_k$ for some $k$ and $j_l\geq i_l$ for all $l$, we have $v(a_I)<v(a_J)$ and $v(a_I)$ is minimal among the $v(a_K)$ (we can pick $I$ so that $v(a_I)$ is minimal and $|I|$ is maximal for this property). Then,
$$\sum_{J, j_l\geq i_l} a_J\overline{Y}^J=a_I\overline{Y}^I\left(1+\sum_{K\in \N^n} \frac{a_{K+I}}{a_I}\overline{Y}^K\right).$$
Let $pg_I=\sum_{K\in \N^n} \frac{a_{K+I}}{a_I}\overline{Y}^K$.
\par For all $k\leq n$, $s<i_k$. By $J_{k,s}$ we denote an index in $\N^n$ whose $k$th coordinate is $s$. Then, by closure under derivation,
$\sum_{J_{k,s}} a_{J_{k,s}} \overline{Y}^{J_{k,s}}\in \Zp\Wsyst{\overline{Y}}$. So, by induction,
$$f_{k,s}=\sum_{J\in \N^{n-1}} a_{J_{k,s}} \overline{Y'}^{J}=\sum_{|J|<A(f_{s,k})} a_{J_{k,s}}\overline{Y'}^{J} (1+h_J(\overline{Y'})).$$
Take $g_{J_{k,s}}=h_J$. Set $A(f)=\max\{ |I|+1, A(f_{s,k})\}$, then rearranging the series, one can obtain series in $\Zp\Wsyst{\overline{Y}}$ which satisfy the properties of the lemma.
\par To prove that $A(f)$ is bounded in terms of $S(f), S(h_1),\cdots, S(h_l)$, it is sufficient by induction to give an upper bound for $|I|$. Let $g(\overline{Y},\overline{T})=f(\overline{T\cdot Y})$. Then, by claim \ref{c-claim 1} there is $\overline{t}$ such that $$v\left( \sum_{|J|=|I|}a_J\overline{t}^J\right) = v(a_I).$$
Let $\widetilde{g}(Z,\overline{T})=g(Z,\cdots, Z, \overline{T})$. By Strassmann's theorem, $\widetilde{g}(Z,\overline{t})$ has $|I|$ roots in $\mathcal{O}_p$. So, $|I|\leq S(\widetilde{g})$.
\end{proof}

\begin{lemma}\label{c-lemma 2}
Let $f(\overline{X},\overline{Y})=\sum a_I(\overline{X})\overline{Y}^I\in \Zp\Wsyst{\overline{X},\overline{Y}}$. Then, there is $B(f)$ such that for all $J$ with $|J|<B(f)$, there is  $g_{J}\in\Zp\Wsyst{\overline{X},\overline{Y}}$ so that 
$$f=\sum_{|J|<B(f)}a_J(\overline{X})g_J(\overline{X},\overline{Y}).$$
Furthermore, $B(f)$ is bounded in terms of constants $A(g)$ (see Lemma \ref{c-lemma 1}) where $g$ is obtained from $f$ using Weierstrass divisions.
\end{lemma}
The existence part is Lemma 4.2.14 in \cite{Cluckers-Lipshitz}. We recall here the proof in order to show the second statement.
\begin{proof}
Let $f(\overline{X},\overline{Y})=\sum_J a_J(\overline{X})\overline{Y}^J=\sum_{IJ} a_{IJ}\overline{X}^I\overline{Y}^J$. Then by Lemma \ref{c-lemma 1}, 
$$f=\sum_{|IJ|<A(f)} a_{IJ}\overline{X}^I\overline{Y}^J(1+g_{IJ}(\overline{X},\overline{Y})). $$
Let $I_0J_0$ maximal (for the lexicographic order) so that $v(a_{I_0J_0})$ is minimal among the $v(a_{KL})$. Then, $a_{I_0J_0}^{-1}f$ is preregular of order $I_0J_0$. To keep the notation simpler, we will now replace $f$ by $a_{I_0J_0}^{-1}f$. We have that $a_{J_0}(\overline{X})$ is preregular of order $I_0$. So, after a bijective change of variables (as in the proof of Theorem \ref{W-EQ}), $a_{I_0}(\overline{Z})$ is regular of order $s$ in $Z_n$. By Weierstrass division,
$$f=a_{I_0}(\overline{Z})U(\overline{Z},\overline{Y}) + (R_0(\overline{Z'},\overline{Y})+\cdots+ R_{s-1}(\overline{Z'},\overline{Y})Z_n^{s-1}),$$
where $U,R_i\in \Zp\Wsyst{\overline{Z},\overline{Y}}$. Let $R_i=\sum_JR_{iJ}(\overline{Z'})\overline{Y}^J$, $R=\sum_i R_iZ_n^i$ and $R_J=\sum_i R_{iJ}Z_n^i$. Then, by induction on $|\overline{Z}|$ (if $|\overline{Z}|=0$, we can take $B(R_0)=A(R_0)$), we have that
$$R_0=\sum_{|I|<B(R_0)}R_{0I}g_I.$$ Also, identifying the coefficients, one see that $a_I=a_{I_0}U_I+R_I$. Then,
\begin{align*}
 f-a_{I_0}U& -\sum_{|I|<B(R_0)} R_I g_I\\
 &=R-R_0-\sum_{|I|<B(R_0)}\sum_{0<i<s} R_{iJ}g_I\\
		& = \sum_{0<i<s}Z_n\Big(R_i-\sum_{|I|<B(R_0)}R_{iI}g_I\Big)\\
		&=: Z_n \sum_I S_I(\overline{Z})\overline{Y}^J=Z_n S(\overline{Z},\overline{Y}).
\end{align*}
where $S$ is a polynomial in $Z_n$. Then as
$$S_J=a_J+a_{I_0}U_J-\sum_{|I|<B(R_0)} R_Ig_{IJ} $$
we can conclude the proof of the lemma by induction on $|\overline{Z}|$ and $s$.
\par It remains to prove that $B(f)$ is bounded by constants $A$ obtained in \ref{c-lemma 1}. By the proof of Lemma \ref{c-lemma 1}, we see that we can assume $|I_0J_0|=A(f)-1$. So, $s\leq i_{0n}+\cdots + i_{01}A(f)^{n-1}\leq nA(f)^n$. $B(R_0)$ is bounded by $A(R_0)$ in the case $|\overline{Z}|=0$ otherwise we proceed by induction. Similarly for $B(S(\overline{Z},\overline{Y}))$. Then, $B(f)\leq \max\{ A(f), B(R_0), B(S)\}$.
\end{proof}

\begin{prop}\label{c-prop 1}
Let $f(\overline{X},\overline{Y})\in \Zp\Wsyst{\overline{X},\overline{Y}}$. Then, there is $C(f)$ and for all $|J|<C(f)$, there is  $g_{J}\in\Zp\Wsyst{\overline{X},\overline{Y}}$ so that 
$$f=\sum_{|J|<C(f)}a_J\overline{Y}^J(1+pg_J).$$
Furthermore, $C(f)$ is bounded in terms of $B(f), S(f)$ and $S(h_1),\cdots, S(h_l)$ where $h_i$ is constructed from $f$.
\end{prop}
The existence is proved in \cite{Cluckers-Lipshitz} Theorem 4.2.15. Again, we go through their proof in order to prove the second statement.
\begin{proof}
Let $\|(a_1,\cdots, a_n)\|=\max\{|a_i|\}$.
By Lemma \ref{c-lemma 2}, $f=\sum_{|I|<B(f)}a_I(\overline{X})g_I(\overline{X},\overline{Y})$. Let $g_{I}(\overline{X},\overline{Y})=\sum_{J}g_{IJ}(\overline{X})\overline{Y}^J$.  For $I,J\in \{0,\cdots, B(f)\}^m$, we can assume that $g_{IJ}=1$ if $I=J$ and $0$ otherwise. Indeed, let 
$$\widetilde{g}_I=\left(g_I-\sum_{\|J\|< B(f)} g_{IJ}\overline{Y}^J+\overline{Y}^I\right),$$
if $|I|<B(f)$ and $\widetilde{g}_I=\overline{Y}^I$ if $|I|\geq B(f), \|I\|<B(f)$. Then,
\begin{align*}
\sum_{\|I\|<B(f)} a_I\widetilde{g}_I&= \sum_{|I|<B(f)} a_I\left(\overline{Y}^I+\sum_{\|J\|\geq B(f)} g_{IJ}\overline{Y}^I\right) + \sum_{|I|\geq B(f), \|I\|<B(f)} a_I \overline{Y}^I\\
&= \sum_{|I|<B(f)} a_I\overline{Y}^I + \sum_{|I|<B(f)}\sum_{\|J\|\geq B(f)} a_Ig_{IJ}\overline{Y}^J + \sum_{|I|\geq B(f), \|I\|<B(f)} a_I \overline{Y}^I\\
&= \sum_{ \|I\|<B(f)} a_I \overline{Y}^I+ \sum_{|I|<B(f)}\sum_{\|J\|\geq B(f)} a_Ig_{IJ}\overline{Y}^J\\
&= \sum_{ \|I\|<B(f)} a_I \overline{Y}^I+ \sum_{\|J\|\geq B(f)}\sum_{|I|<B(f)} a_Ig_{IJ}\overline{Y}^J\\
&= \sum_{ \|I\|<B(f)} a_I\overline{Y}^I + \sum_{\|J\|\geq B(f)} a_J\overline{Y}^J=f.
\end{align*}
For $M(f)$ large enough, we can furthermore assume that $\|g_{IJ}\|<1$ for all $\|IJ\|\geq M(f)$.
\par Let us start the proof of the last part of the proposition: assume $M(f)$ minimal such that it satisfies the property. Then, if $M(f)\not=0$, there is $I_0$ with $|I_0|=M(f)-1$ and $\|a_{I_0}\|=1$. So by claim \ref{c-claim 2}, for all $\overline{x}\in \Zp^n$ with $v(\overline{x})=0$ there is $\overline{t}\in \mathcal{O}_p^m$ such that $v(a_{I_0}(\overline{tx}))=0$. By claim \ref{c-claim 1}, there is $\overline{u}\in \mathcal{O}_p^n$ such that 
$$v\left(\sum_{|I|=|I_0|} a_I(\overline{tx})\right)=v(a_{I_0}).$$
Let $g(\overline{X},\overline{T},\overline{U},Z):=f(\overline{XT}, \overline{U}Z)$. Then by Strassmann's theorem and claim \ref{c-claim 1} there is $\overline{u}$ such that $g(\overline{x},\overline{t},\overline{u}, Z)= f(\overline{xt},\overline{u}Z)$ has $|I_0|=M(f)-1$ root in $\mathcal{O}_p$. So, $M(f)\leq S(g)+1$. 
\par Let $\mathcal{K}=\{1,\cdots , M(f)\}^m\cup \{I\mid\ i_k\geq M(f) \mbox{ for all }k\}$. Then (if we take $M(f)$ also bigger than $B(f)$),
$$f_{\mathcal{K}}=\sum_{I\in\mathcal{K}}a_I\overline{Y}^I = \sum_{I\in \{0,\dots, M(f)\}^m} a_I\overline{Y}^I(1+h_I(\overline{X},\overline{Y}))$$
where $h_I=\widetilde{g_I}-\overline{Y}^I\in \Zp\Wsyst{\overline{X},\overline{Y}}$. Then, take $C(f_{\mathcal{K}})=M(f)$.
\par $f-f_{\mathcal{K}}$ is handled by induction: like in Lemma \ref{c-lemma 1}, we consider $f_{s,k}=\sum_{I_{k,s}} a_{I_{k,s}}\overline{Y'}Y_k^s$. We use the inductive hypothesis to show that $f_{s,k}$ satisfies the proposition. Then, the proposition is proved as $f=f_{\mathcal{K}}+\sum_{s< M(f),k\leq m}f_{s,k}$ : take $C(f) = \max\{M(f), C(f_{s,k})\}$.
\end{proof}
Then in the proof of Theorem \ref{W-EQ} and Proposition \ref{Weierstrass Denef-vdD lemma 1.4} , one can take 
$$\widetilde{f}:=\sum_{J<I} V_J\overline{X}^I(1+pg_J)+\overline{X}^I+\sum_{I<J, |J|<d}pV_J\overline{X}^I(1+pg_J)$$
 where the $g_J$ are given by Proposition \ref{c-prop 1}. The function $b_{IJ}$ are determined by the coefficients of $pg_J$: $pg_J(\overline{X},\overline{Y})=\sum b_{IJ}(\overline{Y})\overline{X}^I$ (i.e.\ $b_{IJ}$ is a derivative of $pg_J$ evaluated at zero) and $d$ is bounded by $C(f)$. This completes the proof of Proposition \ref{Weierstrass Denef-vdD lemma 1.4} and of Theorem \ref{W-EQ}.

\section{Weierstrass system generated by a set of restricted analytic functions}\label{Weierstrass system generated by}

\par Let $F$ be a family of restricted analytic functions. As before, we denote by $\mathcal{L}_F$ the expansion of the language $\mathcal{L}_{Mac}$ by the elements of $F$. We will prove that under the condition that the set of $\mathcal{L}_F$-terms is closed under derivation and \emph{decomposition functions} (to be defined later), the theory $\Z_{p,F}$ is strongly model-complete.
\par Let $W$ be any Weierstrass system which contains $F$. Then the theory of $\Zp$ eliminates the quantifiers in the language $\mathcal{L}_W^D$. In particular, if the functions in $W$ are $\mathcal{L}_F$-existentially definable, we are done. In this section, we will define a Weierstrass system $W_F$ such that any function in $W_F$ is constructible from the data set $F$ i.e.\ for all $f\in W_F$, there exists a finite collection of functions $f_1,\cdots , f_k\in F$ from which one can construct $f$ using polynomial combinations, Weiestrass divisions, permutations of the variables and inverses. We will see in the next section that under the above assumptions on $F$, any function in $W_F$ is actually existentially definable.

\par We define \emph{the Weierstrass system generated by the $\mathcal{L}_F$-terms} by:
\par For each $n$, let $W_{F,n}^{(0)}$ be the set of $\mathcal{L}_F$-terms with $n$ variables.
We define $W_{F,n}^{(m+1)}$ by induction on $m$. Assume that we have defined $W_{F,n}^{(k)}$ for each $n\in \N$ and for each $k\leq m$. Then, $W_{F,n}^{(m+1)}$ is the ring generated by:
\begin{enumerate}[(a)]
	\item $W_{F,n}^{(m)}\subset W_{F,n}^{(m+1)}$;
	\item For all $f\in W_{F,n}^{(m)}$, for all permutations $\sigma$, $f(X_{\sigma(1)},\cdots , X_{\sigma(n)})\in W_{F,n}^{(m+1)}$;
	\item For all $f\in W_{F,n}^{(m)}$, if $f$ is invertible in $\Zp\{\overline{X}\}$, then $f^{-1}\in W_{F,n}^{(m+1)}$;
	\item For all $f,g\in W_{F,n}^{(m)}$, if $f$ is divisible by $g(0)$ in $\Zp\{\overline{X}\}$, then $f/g(0)\in W_{F,n}^{(m+1)}$;
	\item For each $f\in W_{F,n+1}^{(m)}$ regular of order $d$ in $X_{n+1}$, for each $g\in W_{F,n+1}^{(m)}$, the functions $A_0, \cdots , A_{d-1}\in \Zp\{X_1,\cdots , X_{n}\}$ and $Q\in \Zp\{X_1,\cdots , X_{n+1}\}$ given by the Weierstrass division and their partial derivatives belong to $W_{F,n}^{(m+1)}$ and $W_{F,n+1}^{(m+1)}$ respectively.
\end{enumerate}
Let $W_{F,n} := \bigcup_m W_{F,n}^{(m)}$. It is clear that these sets determine a Weierstrass system over $\Zp$. We denote this system by $W_F$. Then, by Theorem \ref{W-EQ}, the theory of $\mathbb{Z}_{p}$  admits elimination of quantifiers in $\mathcal{L}_{W_F}^D$. We will show that each function of $W_F$ is strongly definable in $\mathcal{L}_F$ (under extra assumptions on $F$).

\par Note that by definition, for all $f\in W_{F,n}^{(m+1)}$, there exist $g_1,\cdots, g_k\in W_{F,n+1}^{(m)}$ such that $f$ is obtained from $g_1,\cdots , g_k$ using the above operations (a)-(e) and polynomial combinations. We denote this property by $f\in \langle g_1,\cdots , g_k\rangle$. We denote $f\in \langle f_1,\cdots, f_k\rangle^*$ if we have a family of functions $f_{i,j}$ ($1\leq i\leq k$, $1\leq j \leq n$) such that
\begin{itemize}
	\item  $f\in W_F^{(m+n)}, f_{i,j}\in W_F^{(m+n-j )}$ for all $i, j$;
	\item $f_{1,n}=f_1,\cdots, f_{k,n}=f_k$;
	\item $f\in \langle f_{1,1}, \cdots ,  f_{k,1}\rangle$ and $f_{ij}\in\langle f_{1, j+1},\cdots , f_{k,j+1}\rangle$ for all $i, j$.
\end{itemize}
It should be clear that by induction one can find for each $f\in W_F$ a finite collection of $\mathcal{L}_F$-terms $f_1,\cdots, f_d$ such that $f\in \langle f_1,\cdots , f_d\rangle^*$. Furthermore,
\begin{lemma}\label{lemma 3.1}Let $\Psi(\overline{X})\equiv \exists Y_1,\cdots , Y_n\phi(\overline{X},\overline{Y})$ be a $\mathcal{L}_F$-formula where $\phi$ is quantifier-free that is a boolean combination of formulae of the form $f(\overline{X},\overline{Y})=0$ or $P_n(g(\overline{X},\overline{Y}))$.
Then, there exists $\phi'$ a quantifier-free $\mathcal{L}_{W_F}^D$-formula such that
$$\Zp\vDash \forall \overline{X}\Big(\Psi(\overline{X})\rightleftarrow \exists Z_1, \cdots , Z_{n-1} \phi'(\overline{X},\overline{Z})\Big).$$
Furthermore, for any subterm $f$ in $\phi'$ (not involving $D$), there exists a subterm $h$ in $\phi$ and $P_1,\cdots , P_m$ polynomials with coefficients in $\Z$ such that $f\in \langle h, P_1,\cdots , P_m\rangle^*$ 
\end{lemma}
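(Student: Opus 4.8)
The plan is to prove this by a double induction: an outer induction on the quantifier rank (here just the single existential block $\exists Y_1,\dots,Y_n$, so really an induction on $n$, eliminating one variable at a time) and an inner induction that reduces a quantifier-free $\mathcal{L}_F$-formula to a ``prepared'' shape amenable to eliminating one variable. The first step is to record that a quantifier-free $\mathcal{L}_F$-formula $\phi(\overline{X},\overline{Y})$ is a boolean combination of atomic formulas, which in $\mathcal{L}_{Mac}$ (hence in $\mathcal{L}_F$) are of the form $t(\overline{X},\overline{Y})=0$ and $P_m(t(\overline{X},\overline{Y}))$ for $\mathcal{L}_F$-terms $t$; pushing negations through $P_m$ costs nothing since $\neg P_m(x)$ is a finite disjunction of $P_m(\lambda x)$ over coset representatives $\lambda$, and these compositions $\lambda x$ are again $\mathcal{L}_F$-terms built from the $t$'s and integer polynomials, so we stay inside $\langle t,\ \text{polys}\rangle$ up to this point.

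Next, to eliminate the last quantified variable $Y_n$, I would mimic the core step of the Denef--van den Dries quantifier-elimination argument inside the Weierstrass system $W_F$: view the relevant terms $t(\overline{X},\overline{Y}',Y_n)$ as elements of $\Zp\Wsyst{\overline{X},\overline{Y}',Y_n}$, and after the usual preparations (a change of variables making the finitely many terms regular in $Y_n$, handled by permutations and the algebraic normalizations already built into clauses (b)--(d) of the construction of $W_F$, and splitting the range of $Y_n$ into a residue disc where a Weierstrass polynomial captures the zeros and a complementary region), apply Weierstrass division (clause (e)) to replace each term by a remainder that is polynomial in $Y_n$ of bounded degree with coefficients in $W_{F,n}$ in the remaining variables, plus a unit times the divisor. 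At that point $Y_n$ ranges over a definable set cut out by finitely many polynomial and $P_m$-conditions whose coefficients lie in $W_F$; eliminating it introduces the division symbol $D$ (to solve for the leading behaviour / to express the discriminant-type conditions) exactly as in the $p$-adic cell-decomposition step, producing finitely many new parameters — but one can arrange that no new \emph{existential} variable beyond the bookkeeping is needed for this innermost step, so the count drops from $n$ to $n-1$ as claimed after reintroducing $\exists Y_1,\dots,Y_{n-1}$. The key bookkeeping claim, which I would prove by induction alongside this, is that every subterm of the resulting $\phi'$ not involving $D$ is obtained from the original subterms $g$ of $\phi$ and integer polynomials $P_1,\dots,P_m$ by the operations (a)--(e) \emph{applied in a stratified way} — i.e. each term produced at ``level $j$'' of the elimination is in $\langle\ \text{terms at level }j+1\ \rangle$ — which is precisely the definition of $\langle g,P_1,\dots,P_m\rangle^*$. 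Here the $f_{i,j}$ of that definition are the intermediate terms appearing at stage $j$ of the (inner) reduction, and the membership relations $f_{i,j}\in\langle f_{\cdot,j+1}\rangle$ are read off from the fact that each Weierstrass division, permutation, inversion or integer division consumes only finitely many previously-produced terms.

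The main obstacle I anticipate is not the logical skeleton but making the ``stratified'' claim $f\in\langle g,P_1,\dots,P_m\rangle^*$ come out with the \emph{level indices matching up}: the Weierstrass preparation in the Denef--van den Dries argument is not a single division but an iterated procedure (one must first make a term regular, which may itself require dividing by a discriminant, then divide, then recurse on the coefficients), and one has to check that this iteration has depth bounded in terms of $n$ alone and that at each depth the newly created terms genuinely lie in the $\langle\cdot\rangle$-span of the terms from the immediately preceding depth rather than from several levels back. I would handle this by defining the levels generously — allowing $f\in W_F^{(m+n)}$ with the slack already present in the statement — and by proving a small lemma that says: if $h$ is obtained from $h_1,\dots,h_r$ by one round of Denef--van den Dries preparation-and-division in one variable, then $h\in\langle h_1,\dots,h_r,P_1',\dots\rangle$ for suitable integer polynomials $P_i'$, with the new terms sitting exactly one $W_F$-level higher. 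Granting that lemma, the rest is assembling the stratification and invoking Proposition~\ref{W-EQ} to know that the $\mathcal{L}_{W_F}^D$-formula we have produced is genuinely equivalent to $\Psi$ over $\Zp$.
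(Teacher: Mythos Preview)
Your proposal is correct and takes essentially the same approach as the paper: the paper's entire proof is the single sentence ``This follows immediately from the proof of proposition~\ref{W-EQ},'' i.e.\ one inspects the Denef--van den Dries elimination of one variable and observes that every new term arises from old ones via operations (a)--(e), which is exactly what you have spelled out. Your worry about the iteration depth being ``bounded in terms of $n$ alone'' is unnecessary, since the definition of $\langle\,\cdot\,\rangle^*$ allows an arbitrary finite chain of levels; the depth depends on the terms, not just on the number of quantifiers, and that is fine.
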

This follows immediately from the proof of Theorem \ref{W-EQ}. And, by induction, there exists a quantifier-free $\mathcal{L}_{W_F}^D$-formula $\varphi(\overline{X})$ equivalent to $\Psi$ such that for any term $f$ in $\phi$, $f\in \langle g_1,\cdots , g_l, P_1,\cdots ,P_s\rangle^*$ where $g_1,\cdots , g_l$ are the $\mathcal{L}_F$-subterms in $\Psi$ and $P_1,\cdots ,P_s$ are polynomials with coefficients in $\Z$.

\section{Decomposition functions and definability of finite algebraic extensions}\label{Decomposition functions}

\par Let $F$ be a family of restricted analytic functions and $W_F$ be the Weierstrass system generated by the $\mathcal{L}_F$-terms. We want to prove that any function of $W_F$ is $\mathcal{L}_F$-existentially definable. Let $f\in W_F^{(m+1)}$. Then there are $g_1,\cdots, g_k\in W_F^{(m)}$ such that $f\in \langle g_1,\cdots , g_k\rangle$. Assume that each function $g_i$ is existentially definable. Then so is $f$ if it is constructed from $g_1,\cdots, g_k$ using the operations (a)-(d) and polynomial combinations. However, it is not clear whether it is also the case when $f$ is obtained using Weierstrass division. In general, we couldn't conclude that this is the case. So, we will add extra-conditions on $F$ so that the functions involved in the Weierstrass division are existentially definable from the data set.
First, we illustrate the main idea of the existential definition on a simple example:
\par Let $f$ be a $\mathcal{L}_F$-term regular of order $d$ in $X_{n+1}$. Then, by the Weierstrass preparation theorem, there are $A_0,\cdots ,A_{d-1}\in W_{F,n}^{(1)}$ and a unit $U\in 
W_{F,n+1}^{(1)}$ such that:
$$ f(X_1,\cdots , X_{n+1}) = \Big[X_{n+1}^d + A_{d-1}(\overline{X'})X_{n+1}^{d-1} + \cdots +A_0(\overline{X'})\Big]\cdot U(\overline{X}),$$
where $\overline{X'}=(X_1,\cdots , X_n)$. We want to give an existential definition of the functions $A_0,\cdots , A_{d-1}, U$. 
\par Fix $\overline{x'} = (x_1,\cdots , x_n)\in \Zp^n$. It is rather clear that $U(\overline{x'},X)$ is strongly definable in terms of $f$  and $A_0(\overline{x'}),\cdots , A_{d-1}(\overline{x'})$. The graph of $U$ is determined by the graph of $f, A_{d-1}, \cdots, A_0$ (roughly, $U=f/(X_{n+1}^d+\cdots+A_0)$; we refer to \cite{vdDries4} Lemma 3.4 for the precise definition).
\par Let $\alpha_1,\cdots , \alpha_d$ be the roots of $P(X) := \sum A_i(\overline{x'})X^i+ X^d$ in $\Qp^{alg}$. Note that these are exactly the roots of $f(\overline{x'},X)$ in $\Qp^{alg}$ with nonnegative valuation. Then, the coefficients $A_i(\overline{x'})$ are uniquely determined by $\alpha_1,\cdots , \alpha_d$. For instance, if the roots are nonsingular (i.e.\ if $\alpha_i\not= \alpha_j$ for all $i\not= j$), the coefficients $A_i(\overline{x'})$ are uniquely determined by the system:
$$T(\overline{\alpha}, A_0(\overline{x'}),\cdots, A_{d-1}(\overline{x'}))\equiv
\left(\begin{array}{cccc}
1 & \alpha_1 &\cdots & \alpha_1^{d-1}\\
\vdots & \vdots && \vdots \\ 
1 & \alpha_d &\cdots &\alpha_d^{d-1}
\end{array}\right)
\cdot
\left(\begin{array}{c}
A_0(\overline{x'})\\
\vdots\\
A_{d-1}(\overline{x'})
\end{array}\right) =
\left(\begin{array}{c}
\alpha_1^d\\
\vdots\\
\alpha_d^d
\end{array}\right).
$$
Other similar systems determine the coefficients in the case where the roots are singular. The above relation leads to an existential formula which determines the graphs of the functions $A_i$: $(\overline{x'},\overline{a})\in \mbox{Graph}(A_0,\cdots, A_{d-1})$ iff the formula
 $$\Psi (\overline{x'},\overline{a})\equiv \exists \overline{\alpha}\in \Qp^{alg}\ \bigwedge_i f(\overline{x'},\alpha_i)=0\wedge\Big(\bigwedge_{i\not=j}\alpha_i\not=\alpha_j 
 \wedge \bigwedge_i v(\alpha_i)\geq 0 \wedge T(\overline{\alpha},\overline{a})\Big) \bigvee \Big[\cdots\Big]$$
  is satisfied in $\Zp$, where $\Big[\cdots\Big]$ holds for the disjunction of the systems determining $A_0(\overline{x'}),\cdots , A_{d-1}(\overline{x'})$ in all possible singular cases.
 However, the existential quantifiers in this formula quantify over elements in $\Qp^{alg}$ (the $\alpha_i$'s). Actually, using properties of finite extension of $\Qp$ we can replace $\Qp^{alg}$ by a finite algebraic extension in the above formula:

\par It follows from Krasner's lemma that the $p$-adic field $\Qp$ has finitely many algebraic extensions of a given degree (which can be assumed generated by elements algebraic over $\Q$). So, we can construct a sequence of finite algebraic extensions $K_1\subseteq K_2 \subseteq \cdots$ such that:
\begin{itemize}
\item $K_n$ is the splitting field of $Q_n(X)$ polynomial of degree $N_n$ with coefficients in $\Q$;
\item $K_n= \Qp(\beta_n)$ for all $\beta_n$ root of $Q_n$;
\item any extension of degree $n$ is contained in $K_n$ and its valuation ring is contained in $V_n:=\Zp[\beta_n]$.
\end{itemize}
Let us remark that for all $\overline{x'}\in \Zp^n$, $\alpha_1,\cdots , \alpha_d\in V_d$. So, in the above formula $\Psi$, we can quantify over $V_d$ instead of $\Qp^{alg}$.
\par Let $f\in F$. Then, $f$ defines an analytic function on $V_d$. So, we can consider the structure $(V_d,+, -, \cdot ,0,1, P_n, f;\ n\in \N, f\in F)$. If this structure is existentially definable in $\mathbb{Z}_{p,F}$ then the above formula $\Psi$ can be translated in $\Zp$ and we are done.\\
It is well known that the structure of ring is definable but this may not be the case for the elements of $F$. We will extend $F$ by a family of functions  $\widetilde{F}$ so that the structure $(V_d,+, -, \cdot ,0,1, P_n, f;\ n\in \N, f\in \widetilde{F})$ is existentially definable in $\mathbb{Z}_{p,\widetilde{F}}$.
\par For this, it is sufficient to describe the decomposition of $f$ in the basis of $V_d$ over $\Zp$. Fix $f\in F$ and $y= \sum y_i\beta_d^i\in V_d^k$ (where $y_i\in \Zp^k$). We decompose $f(y)$ in the basis of $V_d$ over $\Zp$:
$$ f(y) =f\left(\sum y_i\beta_d^i\right) = c_{0,f,d}(\overline{y}) + c_{1,f,d}(\overline{y})\beta_d + \cdots + c_{N_d-1,f,d} (\overline{y})\beta_d^{N_d-1},\qquad (*)$$
where $\overline{y}=(y_0,\cdots, y_{N_d-1})$. It determines functions $c_{i,f,d}\in \Zp\{X_1,\cdots, X_{kN_d}\}.$ We call these functions \emph{the decomposition functions of $f$ in $K_d$}. Note that these functions are independent of the choice of $\beta_d$. Indeed, for all $\sigma$ in the Galois group of $K_d$ over $\Qp$ (denoted by $Gal(K_d/\Qp)$), 
$$ f(y^\sigma) =f\left(\sum y_i{\beta_d^\sigma}^i\right) = c_{0,f,d}(\overline{y}) + c_{1,f,d}(\overline{y}){\beta_d^\sigma} + \cdots + c_{N_d-1,f,d} (\overline{y}){\beta_n^\sigma}^{N_d-1}, (**)$$
by continuity of $\sigma$. Let $\widetilde{F} := F\cup \{c_{i,f,d}\mid\ f\in F,\ d\in \N \mbox{ and } i< N_d\}$. Then, by definition,
\begin{lemma}
For all $d$, the structure $(V_d,+, -, \cdot ,0,1, P_n, f;\ n\in \N, f\in F)$ is existentially definable in $\mathbb{Z}_{p,\widetilde{F}}$.
\end{lemma}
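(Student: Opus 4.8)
The plan is to transport the whole structure on $V_d$ to a Cartesian power of $\Zp$ along the coordinate map attached to the $\Zp$-module structure of $V_d$, and then to check that each basic symbol becomes existentially (indeed, mostly quantifier-free) definable in $\mathbb{Z}_{p,\widetilde{F}}$; the only symbols carrying real content are the $f\in F$, and these are handled precisely by the decomposition functions, which is why the statement is essentially true "by definition". Concretely, after rescaling $\beta_d$ by a power of $p$ I may assume $v(\beta_d)\ge 0$, so $\beta_d$ is integral over $\Zp$ and $V_d=\Zp[\beta_d]$ is a free $\Zp$-module on $1,\beta_d,\dots,\beta_d^{N_d-1}$; let $\iota_d\colon \Zp^{N_d}\to V_d$, $(y_0,\dots,y_{N_d-1})\mapsto \sum_i y_i\beta_d^i$, be the corresponding coordinate bijection. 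Inside $\mathbb{Z}_{p,\widetilde{F}}$ the copy of $V_d$ is then $\Zp^{N_d}$ equipped with the operations pulled back along $\iota_d$, and it remains to identify those operations.

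First I would treat the ring structure. Addition pulls back to coordinatewise addition, a $\Z$-polynomial map. Multiplication is governed by the structure constants $\beta_d^i\beta_d^j=\sum_k m_{ijk}\beta_d^k$ obtained by reducing modulo the monic minimal polynomial of $\beta_d$ over $\Qp$; this minimal polynomial lies in $\Zp[X]$ (as $\beta_d$ is integral) and its coefficients are, up to sign, elementary symmetric functions of the $\Qp$-conjugates of $\beta_d$, which are algebraic numbers, so the $m_{ijk}$ lie in $\Q^{alg}\cap\Zp$. Every element of $\Q^{alg}\cap\Zp$ is $\emptyset$-definable in $\mathcal{L}_{Mac}$, so multiplication pulls back to a polynomial map with $\emptyset$-definable coefficients; likewise $0\mapsto(0,\dots,0)$ and $1\mapsto(1,0,\dots,0)$. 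Thus $(V_d,+,\cdot,0,1)$ is quantifier-free interpreted in $\mathbb{Z}_{p,\widetilde{F}}$ with no parameters, and $P_n$ on $V_d$, i.e. $\exists z\in V_d\ z^n=y$, becomes existential by relativizing the quantifier to $\Zp^{N_d}$ and composing with the definitions just obtained.

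The point where the construction of $\widetilde{F}$ enters is the interpretation of the $f\in F$. Fix a $k$-ary $f=\sum_I a_I\overline{X}^I$ with $a_I\in\Zp$ and $v(a_I)\to\infty$; then $v(a_I x^I)\ge v(a_I)\to\infty$ for every $x$ in the valuation ring of $K_d$, so $f$ converges on $V_d^k$, and since $V_d$ is a finitely generated (hence closed) $\Zp$-module, $f$ defines a total map $V_d^k\to V_d$. Substituting $y_j=\sum_i y_{j,i}\beta_d^i$ and collecting powers of $\beta_d$ gives $f(\overline{y})=\sum_{i<N_d}c_{i,f,d}(\overline{y})\,\beta_d^i$ with $c_{i,f,d}\in\Zp\{\overline{X}\}$ — these are exactly the decomposition functions of $f$ in $K_d$, and by definition of $\widetilde{F}$ they are function symbols of $\mathcal{L}_{\widetilde{F}}$. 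Hence, transported along $\iota_d$, the graph of $f\colon V_d^k\to V_d$ is $\{(\overline{y},\overline{w})\mid w_i=c_{i,f,d}(\overline{y})\text{ for }i<N_d\}$, which is quantifier-free in $\mathcal{L}_{\widetilde{F}}$. Putting all of this together shows every basic symbol of $(V_d,+,\cdot,0,1,P_n,f;\,n\in\N,f\in F)$ pulls back to an existentially definable set in $\mathbb{Z}_{p,\widetilde{F}}$, which is the claim.

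The only step that is not pure bookkeeping, and the one I would be most careful about, is that the $c_{i,f,d}$ genuinely belong to $\Zp\{\overline{X}\}$: one must check that reorganizing a series convergent on $V_d^k$ into its $\beta_d$-components still yields power series whose coefficients tend to $\infty$ in valuation. This follows from $v(a_I)\to\infty$ together with the fact that multiplication by $\beta_d$ and reduction modulo its monic integral minimal polynomial do not decrease valuations, but it really belongs to the definition of the decomposition functions rather than to this lemma. The secondary things to keep honest are that the coordinatization uses no parameters (via $\Q^{alg}\cap\Zp$-definability) and that the restriction of $f$ to $V_d^k$ is total, so that no condition on a complement of a domain arises here.
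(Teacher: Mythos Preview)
Your proposal is correct and is precisely the argument the paper has in mind: the lemma is stated immediately after the definition of the decomposition functions with the words ``Then, by definition,'' and no further proof is given, so you have simply spelled out that definition-chasing in full. One minor simplification: since the paper takes $Q_d\in\Q[X]$ to be the minimal polynomial of $\beta_d$ over $\Qp$, the structure constants $m_{ijk}$ already lie in $\Q\cap\Zp$, so you do not need to pass through $\Q^{alg}\cap\Zp$.
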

At this point, one may expect that we will need to add further decomposition functions so that the structure $(V_d,+, -, \cdot ,0,1, P_n, f;\ n\in \N, f\in \widetilde{F})$ is also definable. However this is not the necessary. Indeed, let us remark that the $c_{i,f,d}(\overline{y})$ are linear combinations of the $f(y^\sigma)$: by $(**)$,
$$
\left(\begin{array}{c}
c_{0,f,d}(\overline{y})\\
\vdots\\
c_{N_d-1,f,d}(\overline{y})
\end{array}\right)
=
V^{-1}
\left(\begin{array}{c}f(y^{\sigma_1})\\
\vdots\\
f(y^{\sigma_{N_d}})
\end{array}\right),
$$
where $V$ is the Vandermonde matrix of the roots of $Q_d$ and $\sigma_i$ are the elements of $Gal(K_d/\Qp)$. So, as power series,
$$det\ V\cdot c_{i,f,d}(\overline{y})=\sum a_i\beta _d^i f\left(\sum R_i(\overline{y})\beta_d^i\right), $$
where $a_i\in \Q\cap \Zp$ and $R_i$ is a polynomial with coefficients in $\Zp\cap \Q$. Therefore, the above relation holds for all $\overline{y}\in V_l^{kN_d}$. So,

\begin{prop}\label{interpretation of V_d}
For all $d$, the structure $(V_d,+, -, \cdot ,0,1, P_n,f;\ n\in \N, f\in\widetilde{F})$ is existentially definable in $\mathbb{Z}_{p,\widetilde{F}}$.
\end{prop}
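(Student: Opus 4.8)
The plan is to leverage the previous lemma, which already shows $(V_d,+,\cdot,0,1,P_n,f;\ n\in\N,\ f\in F)$ is existentially definable in $\mathbb{Z}_{p,\widetilde{F}}$, together with the explicit linear-algebra identity exhibited just above the statement. The only thing that remains to be done is to show that each $c_{i,f,d}$, regarded now as a \emph{function symbol} of $\widetilde{F}$ acting on the structure $(V_l,+,\cdot,0,1,P_n;\ n\in\N)$ for an \emph{arbitrary} index $l$, is existentially definable there; combined with the already-known existential definability of the pure ring structure on $V_l$ and of the predicates $P_n$, this gives the proposition. So first I would fix $d$ and an arbitrary $l$, note that $V_d\subseteq V_{\mathrm{lcm}(d,l)}$ and $V_l\subseteq V_{\mathrm{lcm}(d,l)}$, and reduce to working inside a single field $K_m$ containing both, whose ring structure (with predicates) is existentially definable in $\mathbb{Z}_{p,\widetilde F}$ by the preceding lemma applied to the pure ring language.

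Next I would use the displayed power-series identity
$$c_{i,f,d}(\overline{y})=\sum a_i\beta_d^i\, f\!\left(\sum R_i(\overline{y})\beta_d^i\right),$$
with $a_i\in\Q\cap\Zp$ and $R_i\in(\Q\cap\Zp)[\overline{X}]$, and observe that since it is an identity of restricted power series over $\Zp$ it persists after substituting any tuple $\overline{y}$ with entries in $V_m$ (this is exactly the point made in the paragraph preceding the statement). The right-hand side only involves: the ring operations of $V_m$, multiplication by the fixed algebraic constants $\beta_d^i$ and $a_i$ (each of which is a fixed element of $V_m$, hence a constant in an existentially definable structure, or can be eliminated by an existential quantifier pinning down its minimal polynomial), the polynomials $R_i$, and a single application of $f\in F$ on a point of $V_m$. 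Each of these is existentially definable in $\mathbb{Z}_{p,\widetilde F}$ by the previous lemma, so the composite function $c_{i,f,d}$ restricted to $V_m$ — and a fortiori its restriction to the sub-ring $V_l$, which is itself existentially definable in $V_m$ — is existentially definable. Since the graph of $c_{i,f,d}$ and the complement of its domain (which is all of $V_l^{kN_d}$, so trivial) are both existentially definable, $c_{i,f,d}$ is strongly definable as required by the definition of "strongly definable function."

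The residual points to check are bookkeeping: that the constants $\beta_d$, $\beta_m$ and the Galois conjugations $\sigma_i$ used to pass between the two presentations are handled uniformly — here one uses that $K_d\subseteq K_m$ is again a splitting field of a polynomial over $\Q$, that its valuation ring sits inside $V_m$, and that $\beta_d\in V_m$ is a root of $Q_d$, so the Vandermonde matrix $V$ over the roots of $Q_d$ has entries and inverse-entries lying in $V_m$ (after clearing the discriminant, which is a nonzero rational, using closure of a Weierstrass system under division by integers). I expect the only genuinely delicate step to be making sure the existential definability is preserved under the two nested interpretations (first $V_m$ inside $\mathbb{Z}_{p,\widetilde F}$, then $V_l$ inside $V_m$), i.e. that composing existential interpretations yields an existential interpretation and that the uniqueness clause in the definition of strong definability survives composition; this is routine but must be stated, and it is exactly where the choice to phrase everything in terms of $\langle\,\cdot\,\rangle^*$ and strong (rather than merely existential) definability pays off.
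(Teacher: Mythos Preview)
Your proposal is correct and follows essentially the same route as the paper: the paper's ``proof'' is just the observation, made in the paragraph immediately preceding the proposition, that the power-series identity $c_{i,f,d}(\overline{y})=\sum a_i\beta_d^i\, f(\sum R_i(\overline{y})\beta_d^i)$ persists for $\overline{y}\in V_l^{kN_d}$, so that each new function in $\widetilde F$ is expressible on $V_l$ via ring operations, fixed algebraic constants, and applications of an $f\in F$ inside a larger $V_m$ --- all of which are existentially definable by the preceding lemma. Two minor remarks: since the $K_n$ are constructed as an increasing chain you may simply take $m=\max(d,l)$ rather than $\mathrm{lcm}(d,l)$, and your closing discussion of strong definability and nested interpretations, while not wrong, goes beyond what the proposition actually asserts (it only claims existential definability).
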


Finally note that if the set of $\mathcal{L}_F$-terms is closed under derivation, so is the set of $\mathcal{L}_{\widetilde{F}}$-terms. This follows immediately from the above equality $(*)$.

\section{Strong model-completeness}\label{Strong model-completeness}

\par First, let us describe the existential definitions of the functions in $W_F$.
\begin{prop}\label{strong def W-syst}
Let $F$ be a family of functions in $\Zp\{\overline{X}\}$. Assume that the set of $\mathcal{L}_F$-terms is closed under derivation. Let $\widetilde{F}$ be the extension of $F$ by the decomposition functions in $K_d$ of each $f\in F$ (for all $d\in \N$). Let $g\in W_{\widetilde{F}}$. Then $g$ is strongly definable in $\mathcal{L}_{\widetilde{F}}$. Furthermore, for all $d$, the structure $(V_d,+, -, \cdot, 0, 1, g)$ is strongly definable in $\mathbb{Z}_{p,\widetilde{F}}$.
\end{prop}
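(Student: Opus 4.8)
The plan is to prove Proposition \ref{strong def W-syst} by induction on the level $m$ for which $g \in W_{\widetilde{F},n}^{(m)}$, showing simultaneously that (i) $g$ is strongly definable in $\mathcal{L}_{\widetilde{F}}$ and (ii) for every $d$, the function $g$ regarded as an analytic function on $V_d^n$ has strongly definable graph in $\mathbb{Z}_{p,\widetilde{F}}$. The base case $m=0$ is immediate: an $\mathcal{L}_{\widetilde{F}}$-term is by definition already given by a quantifier-free $\mathcal{L}_{\widetilde{F}}$-formula, hence strongly definable; and evaluation on $V_d$ is strongly definable by Proposition \ref{interpretation of V_d}, since the decomposition functions of each $f \in \widetilde{F}$ are available in the language (this is precisely why we passed to $\widetilde{F}$). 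For the inductive step, fix $g \in W_{\widetilde{F},n}^{(m+1)}$; by the remark following the construction of $W_F$, there are $g_1,\dots,g_k \in W_{\widetilde{F}}^{(m)}$ with $g \in \langle g_1,\dots,g_k\rangle$, and by the induction hypothesis each $g_i$ is strongly definable, both over $\Zp$ and over each $V_d$.

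Next I would treat the five closure operations (a)–(e) one at a time. Cases (a) (inclusion) and (b) (permutation of variables) are trivial: strong definability is preserved under renaming variables. Case (c): if $g_i$ is a unit in $\Zp\{\overline{X}\}$ then its inverse $h$ satisfies $h(\overline{x})\cdot g_i(\overline{x}) = 1$ for all $\overline{x}$ and $g_i(\overline{x}) \neq 0$ everywhere, so the graph of $h$ is $\{(\overline{x},t) : t\cdot g_i(\overline{x})=1\}$, which is quantifier-free definable from the strongly definable graph of $g_i$; uniqueness of $t$ is clear, so $h$ is strongly definable, and the same formula works over $V_d$. Case (d) ($g_i/k$ with $k\in\Z$, $k\mid g_i$ in $\Zp\{\overline{X}\}$) is analogous: the graph is $\{(\overline{x},t): k\cdot t = g_i(\overline{x})\}$, again quantifier-free in the data and with unique $t$. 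Closure under ring operations (sums and products) trivially preserves strong definability by composing the defining formulas. The genuinely delicate case is (e), Weierstrass division.

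For (e), I would follow the template laid out in Section \ref{Decomposition functions}. Given $f \in W_{\widetilde{F},n+1}^{(m)}$ regular of order $d$ in $X_{n+1}$, the Weierstrass preparation theorem yields the monic polynomial $P(\overline{X'},X_{n+1}) = X_{n+1}^d + A_{d-1}(\overline{X'})X_{n+1}^{d-1}+\cdots+A_0(\overline{X'})$ and the unit $U$; the coefficients $A_i(\overline{x'})$ are the elementary-symmetric-function data of the roots of $f(\overline{x'},\cdot)$ of nonnegative valuation, all of which lie in $V_d$. Using Proposition \ref{interpretation of V_d} (applied to $f$, which is strongly definable over $V_d$ by the induction hypothesis) one quantifies over $V_d$ to assert existence of roots $\alpha_1,\dots,\alpha_d$ with $f(\overline{x'},\alpha_i)=0$, $v(\alpha_i)\geq 0$, and writes the appropriate linear system $T$ (Vandermonde in the nonsingular case, and the finitely many explicit variants in each singular multiplicity pattern) forcing $\overline{a}=(A_0(\overline{x'}),\dots,A_{d-1}(\overline{x'}))$; since the $A_i$ are literally the coefficients of $\prod(X-\alpha_j)$, this system determines $\overline{a}$ uniquely once a complete list of roots with multiplicity is fixed, and the disjunction over multiplicity patterns still pins down a unique $\overline{a}$. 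This gives strong definability of the $A_i$; then $U$ is strongly definable since $U(\overline{x'},X_{n+1}) = f(\overline{x'},X_{n+1})/P(\overline{x'},X_{n+1})$ on the complement of the zero set of $P$, with $U$ a unit hence nonvanishing, so its graph is the quotient graph (again unique). Finally, the quotient $Q$ and remainder coefficients from dividing an arbitrary $g \in W_{\widetilde{F},n+1}^{(m)}$ by $f$ are obtained from $A_0,\dots,A_{d-1}$, $U$ and $g$ by the explicit Euclidean-division formulas (polynomial division by the monic $P$ followed by multiplication by $U^{-1}$), so they are strongly definable, as are their partial derivatives by the closure-under-derivation hypothesis and the chain/quotient rules, the derivatives of strongly definable terms being again $\mathcal{L}_{\widetilde{F}}$-terms. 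Throughout, the same formulas make sense with all free and bound variables ranging over $V_d$ instead of $\Zp$ — the roots then lie in $V_{d\cdot d'}$ or an appropriate larger $V$, which is still existentially definable by Proposition \ref{interpretation of V_d} — giving the "furthermore" clause.

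The main obstacle is the Weierstrass-division case (e), and specifically the bookkeeping needed to handle singular roots: one must exhibit, for each multiplicity pattern of the roots of $f(\overline{x'},\cdot)$ in the closed unit ball, a concrete system of polynomial equations over $V_d$ that uniquely determines the $A_i$, and check that taking the disjunction over patterns still yields a formula whose witness $\overline{a}$ is unique for each $\overline{x'}$ in the domain — this is where one genuinely uses that the $A_i$ are the coefficients of $\prod_{j}(X-\alpha_j)$ and not merely solutions of some Vandermonde-type system. A secondary point requiring care is that the existential quantification over $V_d$ (and over the larger valuation rings arising when one iterates) must be uniformly translatable into $\mathbb{Z}_{p,\widetilde{F}}$: this is exactly what Proposition \ref{interpretation of V_d} provides, and the induction is arranged so that the hypothesis "$g_i$ strongly definable over $V_d$ for all $d$" is available whenever it is needed for the next level.
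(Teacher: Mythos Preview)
Your inductive scheme and your handling of cases (a)--(d) match the paper's. In case (e), however, there are two genuine gaps.

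First, your formula for the $A_i$ is existential but not \emph{strongly} existential. Strong definability requires the entire witness tuple to be unique, not just the output $\overline{a}$; but the roots $\alpha_1,\dots,\alpha_d$ you quantify over are only determined up to permutation. The paper confronts this explicitly: after writing down exactly the Vandermonde-type systems you describe, it notes that ``the $\alpha_i$'s are only unique up to permutation\dots so far, we have only existentially defined the graphs of the $A_i$'s,'' and then invokes Denef's definable-selection formula for finite subsets of $\Qp$ to single out a canonical ordering of the roots. Without this device or a substitute, you have only established existential (not strong) definability.

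Second, your treatment of $Q$ and of derivatives is too quick. The functions $A_i,\,Q,\,U$ are not $\mathcal{L}_{\widetilde{F}}$-terms, so the closure-under-derivation hypothesis does not apply to them; invoking chain and quotient rules to get $\partial A_i/\partial X_j$ presupposes the very strong definability you are trying to prove. The paper (Claims~\ref{strong def W-unit} and~\ref{strong def W-deriv1}, following van den Dries) instead produces separate explicit linear systems---analogous to the one for the $A_i$---expressing $Q$, $\partial^j Q/\partial Y^j$, and $\partial^I A_i/\partial\overline{X}^I$ directly in terms of values of $f,g$ and \emph{their} derivatives (which lie in $W_{\widetilde{F}}^{(m)}$ by the remark after the construction of $W_F$, so the inductive hypothesis applies). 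Relatedly, ``polynomial division of $g$ by the monic $P$'' is not a first-order recipe: the quotient of a power series by a polynomial is again a power series, not the output of finitely many arithmetic steps, so you need a pointwise definition of $Q(\overline{x},y)$ (essentially $(g-\sum A_iY^i)/f$ away from zeros of $f$, with derivative data at the zeros) rather than an appeal to Euclidean division.
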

Given a function $f\in \Zp\{X_1,\cdots ,X_n\}$, we denote the set $\left\{\frac{\partial^k f}{\partial X_i^k}; 1\leq i \leq n, k\in\N \right\}$ by $[f]$.
\begin{proof}
The proof is very similar to the corresponding results in \cite{vdDries4}. The existential definitions given below are the $p$-adic equivalent of the real case.
\par Let us recall that for all $f\in W_{\widetilde{F},n}^{(m+1)}$, there exist $g_1,\cdots , g_k\in W_{\widetilde{F}}^{(m)}$ such that $f\in \langle g_1,\cdots , g_k\rangle$.
So, it is sufficient to prove by induction on $m$ that
\begin{enumerate}
	\item For all $f\in W_{\widetilde{F},n}^{(m+1)}$, $f$ and its derivatives are strongly definable in terms of functions in $W_{\widetilde{F},n+1}^{(m)}$ (and their derivatives);
	\item The definitions remain true uniformly over the algebraic extensions $V_d$ i.e.\ the graphs of the function $f:V_d^k\rightarrow V_d$ and of its derivatives are strongly definable in terms of functions in $W_{\widetilde{F},n+1}^{(m)}$ (and their derivatives). 
\end{enumerate} 
By definition of the language $\mathcal{L}_{\widetilde{F}}$ and by Proposition \ref{interpretation of V_d}, it is clear that the extensions of the functions in $W_{\widetilde{F},n}^{(0)}$ to $V_d$ are definable. And so are the graphs of their derivatives as the set of $\mathcal{L}_{\widetilde{F}}$-terms is closed under derivation.  So, we assume by induction that the graph of the extension to $V_d$ of any function in $W_{\widetilde{F},n}^{(k)}$ (or one of its derivative) is strongly definable in our structure for all $d$, for all $n$ and for all $k\leq m$.
\par Let $f\in W_{\widetilde{F},n}^{(m+1)}$. Then, $f=P(f_1,\cdots , f_k)$ where $P\in \Z[\overline{Y}]$ and $f_1,\cdots , f_k\in W_{\widetilde{F},n}^{(m+1)}$ are functions of the type (a)-(e) in the definition of Weierstrass system generated by the $\mathcal{L}_F$-terms. If the functions $f_1,\cdots, f_k$ satisfy properties 1. and 2., then $f$ also satisfies these properties.
Indeed, the graph of $f$ is strongly definable in terms of $f_1,\cdots, f_k$ as $(\overline{x},y)$ is a point of the graph of $f$ as functions from $\Zp$ to itself (or as function from $V_d$ to itself if the below formula is satisfied in $V_d$) iff

$$\Zp\vDash \exists t_1\cdots \exists t_k \bigwedge t_i=f_i(\overline{x})\wedge y=P(t_1,\cdots , t_k). $$
Similarly for the derivatives of $f$.
So, we can assume that $f$ is a function of the type (a)-(e).
\par The cases where $f$ is obtained as the division of a function $g\in W_{\widetilde{F},n}^{(m)}$ by division by $h(0)$ or is a function $g$ in $W_{\widetilde{F},n}^{(m)}$ (i.e.\ $h=1$) are obvious: $(\overline{x},y)\in Graph(f)$ iff
$$\Zp \vDash h(0)y=g(\overline{x}). $$
If $f(\overline{X})=g(X_{\sigma(1)},\cdots, X_{\sigma(n)})$ where $\sigma$ is a permutation of $\{1,\cdots , n\}$ then the tuple $(\overline{x},y)$ belongs to the graph of $f$ iff 
$$ \Zp\vDash  \exists \overline{t}\ \bigwedge_i t_i=x_{\sigma(i)} \wedge y=g(\overline{t}).$$
If $f$ is the inverse of a function $g$, then $(\overline{x},y)$ belongs to the graph of $f$ iff
$$\Zp\vDash yg(\overline{x})=1. $$
Therefore, in these cases (a)-(d), both the graphs of $f$, of its derivatives and their extensions to $V_d$ are strongly definable in terms of $[g]$.
So, we are reduced to the case (e):
\par Let $f,g\in W_{\widetilde{F},n+1}^{(m)}$ where $f$ has order $d$ in $Y=X_{n+1}$. Then, there are $A_0,\cdots , A_{d-1}\in W_{\widetilde{F},n}^{(m+1)}$ and $Q \in W_{\widetilde{F},n+1}^{(m+1)}$ such that
$$ g= Qf+ \Big( A_{d-1}Y^{d-1} + \cdots  + A_1Y + A_0\Big).$$
We have to prove that  $A_0,\cdots , A_{d-1}, Q$ (and their derivatives) are strongly definable in $\Zp$ and that the definitions work uniformly over the algebraic extensions $V_d$.
\begin{Fact}\label{strong def W-coeff} $A_0,\cdots, A_{d-1}$ are strongly definable in terms of $[f,g]$.
\end{Fact}
\begin{proof}
Fix $\overline{x}\in  \Zp^{n}$.
Let $\alpha_1,\cdots , \alpha_d$ be the roots of $f(\overline{x},Y)$ in $V_d$ (we take in account multiplicities). Then, $A_0(\overline{x}),\cdots , A_{d-1}(\overline{x})$ are uniquely determined by these roots.
Indeed, first assume that the roots are distinct. In this case, $A_0(\overline{x}),\cdots , A_{d-1}(\overline{x})$ are determined by the relations:
$$\alpha_i\not=\alpha_j \mbox{ for all }i,j$$
$$f(\overline{x},\alpha_i)=0 \mbox{ for all }i$$
$$\left(\begin{array}{cccc}
1 & \alpha_1 &\cdots & \alpha_1^{d-1}\\
\vdots & \vdots && \vdots \\ 
1 & \alpha_d &\cdots & \alpha_d^{d-1}
\end{array}\right)
\left(\begin{array}{c}
A_0(\overline{x})\\
\vdots\\
A_{d-1}(\overline{x})
\end{array}\right) =
\left(\begin{array}{c}
g(\overline{x},\alpha_1)\\
\vdots\\
g(\overline{x},\alpha_d)
\end{array}\right).$$

If $f(\overline{x},Y)$ admits singular roots, say $\alpha_1=\alpha_2$ and $\alpha_i\not= \alpha_j$ for all $i\not=j$, $i,j\not=2$ for instance, then we replace the $d$ equations $f(\overline{x},\alpha_1)=\cdots = f(\overline{x}, \alpha_d)=0$ by $f(\overline{x}, \alpha_1)=\frac{\partial f}{\partial Y}(\overline{x}, \alpha_1)=f(\overline{x},\alpha_3)=\cdots = f(\overline{x}, \alpha_d)=0$. The functions $A_i$ are determined in this case by the relations:
$$\alpha_i\not=\alpha_j \mbox{ for all }i\not= j,\ j\not=2$$
$$f(\overline{x},\alpha_i)=0 \mbox{ for all }i\not=2$$
$$\frac{\partial f}{\partial Y}(\overline{x}, \alpha_1)=0$$
$$\left(\begin{array}{cccc}
1 & \alpha_1 &\cdots & \alpha_1^{d-1}\\
0 & 1 &\cdots & (d-1)\alpha_1^{d-2}\\
1 & \alpha_3 &\cdots & \alpha_3^{d-1}\\
\vdots & \vdots && \vdots \\ 
1 & \alpha_d &\cdots & \alpha_d^{d-1}
\end{array}\right)
\left(\begin{array}{c}
A_0(\overline{x})\\
A_1(\overline{x})\\
A_2(\overline{x})\\
\vdots\\
A_{d-1}(\overline{x})
\end{array}\right) =
\left(\begin{array}{c}
g(\overline{x},\alpha_1)\\
\frac{\partial g}{\partial Y}(\overline{x},\alpha_1)\\
g(\overline{x},\alpha_3)\\
\vdots\\
g(\overline{x},\alpha_d)
\end{array}\right).$$
For each configuration of multiplicities of the roots of $f(\overline{x},Y)$, the coefficients $A_i$ are completely determined by a system like above. We proceed to a disjunction over all possible cases to define the graphs of $A_0,\cdots , A_{d-1}$ on $\Zp^{n}$.
\par Let $\Psi(\overline{x},A_0(\overline{x}),\cdots, A_{d-1}(\overline{x}),\overline{\alpha})$ be the disjunction of all possible systems like above. Then, the following formula gives an existential definition of the graphs of $A_0,\cdots , A_{d-1}$:
$$\exists \alpha_1\cdots \alpha_d\in V_d\ \Psi(\overline{x},A_0(\overline{x}),\cdots, A_{d-1}(\overline{x}),\overline{\alpha}). $$
Let us remark that the above definition is an existential definitions where we quantify over $V_d$. We interpret this formulas in $\Zp$. So, formally, the $\alpha_i$'s are replaced by tuples. The additions, multiplications (in $V_d$ in $\Psi$) are replaced by their interpretation in $\Z_p$. Similarly, the functions $f,g$, their derivatives are also replaced by their interpretations in $\Zp$ (which exists by inductive hypothesis).
\par Note also that the $\alpha_i$'s are only unique up to permutation. It means that so far, we have only existentially defined the graphs of the $A_i$'s. This is a consequence of the existence of Skolem function in $\Qp$: We transform this existential definition into a strong existential formula using \cite{Denef}. In this paper, J. Denef gives a formula of definable selection for finite sets i.e.\ a quantifier-free formula $D (x,X)$ (where $X$ is a new predicate) such that for all $X(\overline{v})$ a predicate corresponding to a finite set in $\Qp$:
$$
\begin{array}{rl}
\Qp\vDash \exists v_1,\cdots, v_s &\Big[ \bigwedge_i X(v_i)\wedge \bigwedge_{i,j} v_i\not=v_j\Big]\\
& \rightarrow \exists !v_1,\cdots \exists ! v_s \Big[ \bigwedge_i X(v_i) \wedge \bigwedge_iD(v_i,X)\wedge \bigwedge_{i,j} v_i\not=v_j \Big].
\end{array}$$
We use this formula with $X$ equals to the set $\{\alpha_1,\cdots ,\alpha_d\}$ (interpreted in $\Zp$) to get a strong definition of the graphs of the $A_i$'s.
\end{proof}
Note that the above formula works uniformly over the algebraic extensions. Therefore, the graphs of the $A_i's$ as functions from $V_d^n$ to $V_d$ are also strongly definable.

\begin{Fact}[Lemma 3.4 \cite{vdDries4}]\label{strong def W-unit}  $Q$ and its derivatives (with respect to $Y$) are strongly definable in terms of $[f,g], A_0,\cdots, A_{d-1}$.
\end{Fact}
\begin{Fact}[Proposition 3.8 \cite{vdDries4}]\label{strong def W-deriv1}  For all $I, j$, $\frac{\partial^I A_0}{\partial \overline{X}^I},\cdots , \frac{\partial^I A_{d-1}}{\partial \overline{X}^I}$ and $\frac{\partial^I\ \ \partial^j Q}{\partial\overline{X}^I\partial Y^j}$ are strongly definable in terms of $[f,g], A_0,\cdots, A_{d-1}, Q$.
\end{Fact}
One can adapt these formula in the $p$-adic context as above. Again, it leads to existential definitions where the quantifiers are over $V_d$ and we have to interpret these formulas in $\Zp$. Note that the definitions also work uniformly over finite algebraic extensions.

\par This proves that $A_0, \cdots , A_{d-1},Q$ and their derivatives are strongly definable functions in terms of functions in $W_{\widetilde{F},n+1}^{(m)}$ and therefore completes the proof of the proposition.

\end{proof}
The first main theorem follows immediately from Theorem \ref{W-EQ} and Proposition \ref{strong def W-syst}
\begin{theorem}\label{strong model-completeness}
Let $F$ be a family of restricted analytic functions. Assume that the set of $\mathcal{L}_F$-terms is closed under derivation. Let $\widetilde{F}$ be the extension of $F$ by the decomposition functions of $f$ for each $f\in F$. Then, $\mathbb{Z}_{p,\widetilde{F}}$ is strongly model-complete in $\mathcal{L}_{\widetilde{F}}$.
\end{theorem}

\section{Effective Weierstrass system}\label{Effective Weierstrass system}

\par We are now interested in an effective version of Theorem \ref{strong model-completeness} i.e.\ is there an algorithm which takes for entry a $\mathcal{L}_{\widetilde{F}}$-formula and return an existential $\mathcal{L}_{\widetilde{F}}$-formula equivalent to it. First, we remark that we need some way to encode formulas. This is only possible if $F$ is countable. Therefore, from now on, we will assume that this is the case. We fix a G\"odel numbering for the language $\mathcal{L}_{\widetilde{F}}$. Then every term and formula has a code attached to it. In Theorem \ref{strong model-completeness}, we assume that the set of $\mathcal{L}_F$-terms is closed under derivation. It is important that the derivation is effective i.e.\ that for all variable $X_i$, there is an algorithm which takes for entry (code for) a $\mathcal{L}_F$-term (say $f(\overline{X})$) and return (the code for) the $\mathcal{L}_F$-term $\frac{\partial f}{\partial X_i}$. Whenever all these hypotheses are satisfied we will say that $F$ is an \emph{effective family of restricted analytic functions}. Note that we do not need to assume that the set of $\mathcal{L}_{\widetilde{F}}$-terms is closed under (effective) derivation: by definition of the decomposition functions, this set is closed under derivation and we have an explicit formula for the derivation of the decomposition functions in terms of the elements of $F$.

\par Let $F$ be an effective family of restricted analytic functions and $W_{\widetilde{F}}$ be the Weierstrass system generated by the $\mathcal{L}_{\widetilde{F}}$-terms. Then for each $f\in W_{\widetilde{F}}$ there exists an existential $\mathcal{L}_{\widetilde{F}}$-formula that defines the graph of $f$: this is the statement of Proposition \ref{strong def W-syst}. Let $\mathcal{E}_F$ be the set of existential $\mathcal{L}_{\widetilde{F}}$-formulas in the form of Proposition \ref{strong def W-syst}. By the proof of the proposition, this set of formulas is recursively enumerable. We also fix some recursive rule so that our formula is written in such way that we keep track of each step of the inductive procedure and of the different cases that are used i.e.\ given a code for a function we have to be able to reconstruct how that function has been obtained (using operations (a)-(e) in the definition of Weierstrass system generated by the $\mathcal{L}_F$-terms).
 Each element of this set has a code attached to it and given a code for an $\mathcal{L}_{\widetilde{F}}$-formula, we can determined whether or not this code correspond to an element of $\mathcal{E}_F$. Note that it is possible that an element of $\mathcal{E}_F$ does not interpret in $\Zp$ the graph of an element of $W_F$ (for instance, if we write a formula that is the definition of the coefficient of Weierstrass division applied to a nonregular function). This is not an issue: indeed, in the course of the proof of model-completeness, we only use elements in $\mathcal{E}_F$ that are constructed as the graph of an element of $W_F$ (and a code for such an element is computable from the way it is constructed). In an other direction, the same element of $W_F$ can be coded by two different elements of $\mathcal{E}_F$: again it is not an issue as we are only interested in the model-completeness. The full decidability of the theory would require to determine whether or not two such codes (i.e.\ existential formulas) interpret the graph of the same function.

\par Let us recall Theorem \ref{W-EQ}: the theory of $\Zp$ in the language of Macintyre expanded by symbols for the element of a Weierstrass system and division admits the elimination of quantifiers.
Assume that the  Weierstrass system is of the type $W_{\widetilde{F}}$ for some $F$ effective family of restricted analytic functions. Then, an inspection of the proof given in \ref{W-EQ} shows that it is recursive except for the use of Proposition \ref{Weierstrass Denef-vdD lemma 1.4} and the construction of $\widetilde{f}$. Let $\Phi$ be a $\mathcal{L}_{\widetilde{F}}$-formula. Once we have determined a procedure to compute $d(f)$ and a code for $\widetilde{f}$ (from the code $f$), any function that appear in the proof of quantifier elimination is in $W_{\widetilde{F}}$ and we can compute a code attached to it. So, we can code a quantifier-free equivalent to $\phi$ (in $\mathcal{L}_{W_{\widetilde{F}}}$). Then by Proposition \ref{strong def W-syst}, we can compute an existential $\mathcal{L}_{\widetilde{F}}$-formula equivalent to $\Phi$.
\par In Lemma \ref{c-lemma 1}, Lemma \ref{c-lemma 2} and Proposition \ref{c-prop 1}, we give an explicit description of the construction of $\widetilde{f}$ and of the functions $b_{IJ}$. Furthermore, the $d$ that appear in Proposition \ref{Weierstrass Denef-vdD lemma 1.4} can be bounded in terms of $S(f)$ and $S(g_1),\cdots, S(g_l)$ where $g_i\in W_{\widetilde{F}}$ is obtained from $f$ in an explicit way i.e.\ there is a computable element of $\mathcal{E}_F$ attached to each $g_i$. So, if $S(g)$ is computable from a code for $g$, the proof of Theorem \ref{W-EQ} can be done recursively and therefore, we obtain effective model-completeness. We will say that a Weierstrass system generated by $\mathcal{L}_{\widetilde{F}}$-terms is effective if the constants $S(g)$ are computable for all $g\in W_{\widetilde{F}}$:
\begin{definition}
A Weierstrass system $W_{\widetilde{F}}$ is called \emph{effective} if there exists an algorithm which takes for entry $e$ a code for an element of $\mathcal{E}_F$ and return an integer $S(e)$ such that, if $e$ is the code for a function interpreted in $\Zp$ by the graph of an element $f(\overline{X},Y)$ in $W_{\widetilde{F}}$, for all $\overline{x}\in\Zp$ the set
$$\{y\in \mathcal{O}_p\mid\ f(\overline{x},y)=0\}$$
is either infinite or has cardinality less than $S(e)$.
\end{definition}
Note for $f\in W_{\widetilde{F}}$ that appear in the proof of Theorem \ref{W-EQ} we have a code $e\in W_{\widetilde{F}}$ attached to it. We set $S(f)=S(e)$. This is a slight abuse of notation as it could happen that some $f'$ that is construct in some other way is equal to $f$. In that case, we have a code $e'$ attached to $f'$ and it could happen that $S(e)\not=S(e')$. This is not an issue in our case as we are only interested by model-completness. So, we have no need to check whether or not two existentential formulas that define the graph of a function interprete the same function in $\Zp$ (so, there is not harm to assume this is not the case). 
\par  If $W_{\widetilde{F}}$ the Weierstrass system generated by the $\mathcal{L}_{\widetilde{F}}$-terms is effective then the strong model-completeness in Theorem \ref{strong model-completeness} is effective. In fact, under some extra-hypotheses on $F$, it will turn out that it is sufficient to have control on the $\mathcal{L}_F$-terms to obtain an effective Weierstrass system. 
\begin{definition} Let $F$ be an effective family of restricted analytic functions. We say that $F$ has an effective Weierstrass bound if there is an algorithm which takes for entry a code for a $\mathcal{L}_{\widetilde{F}}$-term $f$ and return an integer $d(f)$ which satisfies the properties of Fact \ref{Denef-vdD lemma 1.4 bis} i.e.\ that if $f(\overline{X},\overline{Y})=\sum a_I(\overline{X})\overline{Y}^I$, there are $b_{IJ}(\overline{X})\in \Zp\Wsyst{\overline{X}}$ with $\|b_{IJ}(\overline{X})\|<1$ and $\|b_{IJ}\|\rightarrow 0$ as $|I|\rightarrow \infty$, such that 
for all $I$ with $|I|\geq d(f)$,
$$a_I(\overline{X}) = \sum_{|J|<d(f)}b_{IJ}(\overline{X}) a_J(\overline{X}).$$

\end{definition}
Note that in the above definition, $d$ depends on a choice of a partition $(\overline{X},\overline{Y})$ on the variables of $f$. In fact, in the definition, either we need an algorithm for each possible partitions or we may also take $d(f)$ to be the max of all $d_{(\overline{X},\overline{Y})}(f)$ over all possible partition (i.e.\ we may assume that $d(f)$ is independent of a particular choice of partition). Let us remark that it is not required that the series $b_{IJ}$ are (effectively) existentially definable. We explain now the link between $S(f)$ for $f\in W_{\widetilde{F}}$ and $d(g)$ for $\mathcal{L}_{\widetilde{F}}$-term $g$. 
Note that in the above definition, $d$ depends on a choice of a partition $(\overline{X},\overline{Y})$ on the variables of $f$. In fact, in the definition, either we need an algorithm for each possible partitions or we may also take $d(f)$ to be the max of all $d_{(\overline{X},\overline{Y})}(f)$ over all possible partition (i.e.\ we may assume that $d(f)$ is independent of a particular choice of partition). Let us remark that it is not required that the series $b_{IJ}$ are (effectively) existentially definable. We explain now the link between $S(f)$ for $f\in W_{\widetilde{F}}$ and $d(g)$ for $\mathcal{L}_{\widetilde{F}}$-term $g$. 
\par  Let $f$ be an $\mathcal{L}_{\widetilde{F}}$-term and $d(f)$ as defined above. Then, $S(f)\leq d(f)$. Indeed, $S(f)$ is a uniform bound on the the number of solutions in $\mathcal{O}_p$ of $f(X,\overline{y})=0$. Strassmann's theorem states that it is equals to the index of the last coefficient of minimal valuation. By definition, $d(f)$ is an upper bound for the index of this coefficient. 
\par In general, let $f$ be a function in our Weierstrass system. Then, there are integers $n$ and $m+1$ such that $f\in W_{{\widetilde{F}},n}^{(m+1)}$. The function $f$ has an existential definition in terms of functions in $W_{{\widetilde{F}},n+1}^{(m)}$:  there exist $g_1,\cdots , g_k\in W_{{\widetilde{F}},n+1}^{(m)}$ such that $f\in \langle g_1,\cdots , g_k\rangle$. We will see that $S(f)$ can be bounded in terms of $d(g_1),\cdots d(g_k)$ (actually; one also need to take in account derivatives of these functions and composition with polynomials).
Going down by induction, we may assume that the $g_i$'s are $\mathcal{L}_{\widetilde{F}}$-terms. So, assuming that $d(g_1),\cdots, d(g_k)$ are computable for all $g_i$ $\mathcal{L}_{\widetilde{F}}$-terms (i.e.\ $F$ has an effective Weierstrass bound), we will be able to compute $S(f)$.
\par The cases where $f$ is obtained from a function $g$ by inversion, permutation of the variables or division by a constant are rather easy (the number of zeros of $f$ is immediately determined by the number of zeros of $g$). The main difficulty is the case where $f$ is obtained using Weierstrass division. In this case, by the definitions given in the facts \ref{strong def W-coeff} to \ref{strong def W-deriv1} in Proposition \ref{strong def W-syst}, we see that zeros of such a function correspond to zeros of systems of $n'$ equations in $W_{F,n'}^{(m)}$ (with the same parameters as the one that appear in $f$).

\par In section \ref{Effective bound}, we will bound the number of solutions in $(\mathcal{O}_p)^n$ of a general system of $n$ analytic functions with $n$ variables (uniformly over parameters) in an effective way (depending on the constants $d(f)$ for any $f$ in the system or one of its derivatives). For this, we will use results of tropical analytic geometry from \cite{Rabinoff}. These results relate the number of solutions of the system to a geometric volume. This volume will be in turn bounded effectively in terms of $d(g)$'s where $g$ is any function in the system or one of its derivatives. However, to use the results we may need to apply a small perturbation to our system as follow:
\begin{definition}
Let $f\in\Zp\{\overline{X}\}$. We say that $f$ is overconvergent if there is a ball $B$ around zero that strictly contains $\mathcal{O}_p^n$ and such that $f$ converges on $B$.
\end{definition}
Let $f\in \Zp\{\overline{X}\}$ overconvergent. Let $t\in \mathcal{O}_p$ with positive valuation less than $\varepsilon$. Let $\widehat{f}=f(t^{-1}X)$. If $f$ is overconvergent, then $\widehat{f}\in \Zp\{\overline{X}\}$ if $\varepsilon$ is small enough (in fact, $\widehat{f}\in \Qp\{\overline{X}\}$ but as we are interested in the zeros of $f$, we can multiply by a scalar (that only depends on $\varepsilon$ and not on $t$) so that all coefficients are in $\Zp$). As $\widehat{f}$ is a restricted power series, we may defined $d(\widehat{f})$ and $S(\widehat{f})$ as before. To get effective model-completeness, it will be required that $d(\widehat{f})$ is computable (for all term $f$ in our language). 
\par Let us give some more precise definition of our setting. The key example to keep in main is the case of exponential terms.
We consider $F$ such that each $\mathcal{L}_F$-term is overconvergent (the radius of convergence may be different for each term). We assume that $F$ is closed under decomposition functions and that the set of $\mathcal{L}_F$-terms is closed under derivation in an effective way (in the sense of the beginning of the section). We will finally assume that $F$ satisfies the following definition:
\begin{definition}
We say that the set of $\mathcal{L}_{\widetilde{F}}$-terms has an effective generalised Weierstrass bound if there is an algorithm which takes for entry a code for a $\mathcal{L}_{\widetilde{F}}$-term and return an integer $d'(f)$ such that there exists $\varepsilon(f)>0$ such that for all $t\in \mathcal{O}_p$ with positive valuation less than $\varepsilon(f)$ , $d(\widehat{f})\leq d'(f)$ where $\widehat{f}$ is obtained from $f$ after the change of variable $X_i\rightarrow X_it^{-1}$ for all $i$. 
\end{definition}
Note that in the above definition, $\varepsilon(f)$ may depend on $f$ but we do not require that the $\varepsilon(f)$ is computable. The existence of $\varepsilon(f)$ for the interpretation of the language is sufficient. This existence will be guaranteed by the hypothesis of overconvergence. In fact, the algorithm takes for entry $\mathcal{L}_{\widetilde{F}}$-term i.e.\ a syntaxic object and return $d'(f)$ so that for the interpretation of the language there is some $\varepsilon$ such that for all $t$ with $0<v(t)<\varepsilon$, $d(\widehat{f})\leq d'(f)$. In our application in the exponential case, we will take  $d'(f)$ to be $d(\widehat{f})$ for some $t$ with computable valuation (so in that case, $\varepsilon$ is computable).
The above definition makes sense as we are interested in change of variables with $v(t)$ small enough and because of the following inequalities:
\begin{lemma}
 Let $t,t'\in \mathcal{O}_p$ with $0<v(t)<v(t')$. Let $\widehat{f}$ (resp. $\widehat{f}'$) be the series obtained from $f$ after the change of variables $X_i\rightarrow X_it^{-1}$ (resp. $X_i\rightarrow X_it^{-1}$). Then, $d(\widehat{f})\leq d(\widehat{f}')$ (and, $S(\widehat{f})\leq S(\widehat{f'})$).
\end{lemma}
\begin{proof}
Let $f(\overline{X},\overline{Y})=\sum_I a_I(\overline{X})\overline{Y}^I$. Then,
$$\widehat{f}(\overline{X},\overline{Y})=\sum a_I(\overline{X}) \frac{\overline{Y}^I}{t^{|I|}}=:\sum \widehat{a}_I(\overline{X})\overline{Y}^I;$$
$$\widehat{f'}(\overline{X},\overline{Y})=\sum a_I(\overline{X}) \frac{\overline{Y}^I}{{t'}^{|I|}}=:\sum \widehat{a}_I'(\overline{X})\overline{Y}^I.$$
As $v(t')>v(t)$, $t'=ct$ for some $c$ with $v(c)>0$. Assume that for all $I$, $|I|\geq d(\widehat{f}')$, there are $b_{IJ}$ such that $\|b_{IJ}\|\leq 1$, $\|b_{IJ}\|\rightarrow 0$ as $|I|\rightarrow \infty$ and for all $I$ with $|I|\geq d(\widehat{f}')$
$$\widehat{a}_I'=\sum_{|J|<d(\widehat{f'})} \widehat{a}_J' b_{IJ},$$
By definition of $\widehat{a}_I'$ and as $t'=ct$, it implies that
$$\frac{a_I}{{t'}^{|I|}}=\frac{a_I}{t^{|I|}c^{|I|}}=\sum_{|J|<d(\widehat{f'})} \frac{a_J}{t^{|J|}c^{|J|}} b_{IJ}$$
i.e.\
$$\frac{a_I}{t^{|I|}}=\widehat{a}_I=\sum_{|J|<d(\widehat{f'})} \widehat{a}_J \frac{c^{|I|}}{c^{|J|}} b_{IJ}.$$
As $|I|>|J|$, $v(\frac{c^{|I|}}{c^{|J|}} )>0$. Set ${b'}_{IJ}:= \frac{c^{|I|}}{c^{|J|}}b_{IJ}$. Then for all $I$, $|I|\geq d(\widehat{f}')$, $\|{b'}_{IJ}\|\leq 1$, $\|{b'}_{IJ}\|\rightarrow 0$ as $|I|\rightarrow \infty$ and for all $I$ with $|I|\geq d(\widehat{f}')$
$$\widehat{a}_I=\sum_{|J|<d(\widehat{f'})} \widehat{a}_J {b'}_{IJ}.$$
So, $d(\widehat{f})\leq d(\widehat{f}')$.
\end{proof}

Our second main theorem is that assuming $F$ to be an effective family of restricted overconvergent analytic functions and that the set of $\mathcal{L}_{\widetilde{F}}$-terms has an effective generalised Weierstrass bound, then $W_{\widetilde{F}}$ is an effective Weierstrass system and so we have effective model-completeness. But first, we prove the promised result on the effective bound for system of analytic functions.

\section{Effective bound on the number of solutions in $\mathcal{O}_p$ of some effective analytic system}\label{Effective bound}

\par First, we start this section by stating some results and definitions from \cite{Rabinoff} that will be used in our proofs. Let us remark that we do not state the definitions nor the results in full generality but we have restricted them in the case of our interest. In particular, the results hold if we replace $\Qp$ by any of its finite algebraic extension or by $\Cp$.

\par Let $P=\prod [r_i,\infty)\subset \R^n$ with $r_i\in \Q$. Then, $\Zp\langle P\rangle$ denotes the set of power series in $\Zp[[\overline{X}]]$ convergent on the product of balls with center $0$ and radius $p^{-r_i}$ i.e.\

$$\Zp\langle P\rangle = \left\{\sum a_I\overline{X}^I\mid\ v(a_I)+ \langle I, v(\overline{x})\rangle\rightarrow \infty\ \forall \overline{x}\mbox{ such that }v(\overline{x})\in P\right\},$$
(where $\langle \cdot, \cdot \rangle$ denotes the usual scalar product and the limit is taken over $|I|\rightarrow \infty$). For instance, if $P=\prod [0,\infty)^n$ then $\Zp\langle P\rangle=\Zp\{\overline{X}\}$.
\par Let $\overline{x}\in \Cp^n$. The \emph{tropicalization of $\overline{x}$}, denoted by $trop(\overline{x})$, is the tuple formed by the valuations of the $x_i$'s:
$$trop(\overline{x})=(v(x_1),\cdots , v(x_n)). $$ 
Let $f\in \Zp\langle P\rangle$ and $C\subseteq \overline{P}:=\prod [r_i,\infty]$, we denote
$$V(f;C)=\{\overline{x}\in\Cp\mid\ trop(\overline{x})\in C \mbox{ and } f(\overline{x})=0\}. $$
If $C=\overline{P}$, we denote the above set by $V(f)$.\\
We define the \emph{tropicalization of $f$} as the closure of the set
$$\{\nu\in \overline{P}\mid \mbox{  there exists $\overline{x}\in V(f)$ such that and } trop(\overline{x})=\nu\},$$
where the closure is taken in $\overline{P}$. We denote this set by $Trop(f, P)$ or by $Trop(f)$ when $P$ is clear from the context. Similarly, if $Y$ is a subset of $\Cp^n$, $Trop(Y)$ denote the image of $Y$ by the map $trop$ in $(\R\cup\{\infty\})^n$.
\par $Trop(f)$ is actually completely determined by the coefficients of $f$:
Let $f=\sum a_I\overline{X}^I\in \Zp\langle P\rangle$. Fix $\nu\in P$. Let
\begin{align*}
vert_\nu(f) =\{(I,v(a_I))\mid\ &v(a_I)+\langle I,\nu\rangle\leq val(a_{I'})+\langle I',\nu\rangle \\
																					 &\mbox{ for all monomials $a_{I'}\overline{X}^{I'}$ of }f\}.
\end{align*}
This is the set of points such that the valuation of the monomial $a_I\overline{x}^I$ is minimal (among all valuation of the monomial of the series) for $trop(\overline{x})=\nu$. As $f\in \Zp\langle P\rangle$, $v(a_I)+\langle I,\nu\rangle\rightarrow \infty$. So, $vert_\nu(f)$ is actually a finite set. Furthermore, it is proved in \cite{Rabinoff} (Lemma 8.2) that $vert_P(f)=\bigcup_{\nu\in P} vert_\nu(P)$ is finite.
\par We define the \emph{initial form of $f$ with respect to $\nu$} to be
$$ in_\nu(f)=\sum_{(I,v(a_I))\in vert_\nu(f)}a_I\overline{X}^I\in \Zp[\overline{X}].$$
Let us remark that
$$vert_\nu(f)=\{(I,v(a_I))\mid\ a_I\overline{X}^I\mbox{ is a monomial of } in_\nu(f)\}.$$

Let $f\in \Zp\langle P\rangle$. Let $\overline{t}\in \Cp^n$ such that $f(\overline{t})=0$. By the ultrametric inequality, we have that for some $I,I'\in \N^n$ distinct, $v(a_I\overline{t}^I)=val(a_{I'}\overline{t}^{I'})=\min_J\{val(a_J\overline{t}^J)\}$. So, if $\nu=v(\overline{t})\in Trop(f)$, $inv_\nu(f)$ is not a monomial. A crucial result in \cite{Rabinoff} is that the converse is true:
\begin{lemma}[Lemma 8.4 in \cite{Rabinoff}] Let $f\in \Zp\langle P\rangle$ nonzero. Then,
$$Trop(f)=\{\nu\in \overline{P}\mid\ inv_\nu(f)\mbox{ is not a monomial}\}.$$
\end{lemma}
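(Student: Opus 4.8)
The plan is to prove the two inclusions of the asserted equality separately; the inclusion from left to right is elementary, while the reverse inclusion — the statement that the combinatorial tropical hypersurface is realised by genuine zeros of $f$ — is the real content. For $Trop(f)\subseteq\{\nu\in\overline{P}\mid in_\nu(f)\text{ is not a monomial}\}$: if $\overline{t}\in\Cp^n$ is a zero of $f$ and $\nu=trop(\overline{t})$, write $0=f(\overline{t})=\sum_I a_I\overline{t}^I$, the sum running over those $I$ with $\overline{t}^I\neq 0$, for which $v(a_I\overline{t}^I)=v(a_I)+\langle I,\nu\rangle$; since in a non-archimedean field a nonzero sum has the valuation of its strictly dominant term, this sum can vanish only if the minimum of $v(a_I)+\langle I,\nu\rangle$ is attained by at least two indices, i.e. $vert_\nu(f)$ has at least two elements and $in_\nu(f)$ is not a monomial. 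Moreover, by the finiteness of $vert_P(f)$ recalled above, on $P$ the tropical polynomial $T(\nu):=\min_I(v(a_I)+\langle I,\nu\rangle)$ is the minimum of a fixed finite set of affine forms with integral data, hence concave and piecewise affine, and the locus where its minimum is attained by a single index is open; so the right-hand set is closed, and as $Trop(f)$ is by definition the closure of $\{trop(\overline{t})\mid\overline{t}\in V(f)\}$, the inclusion follows.

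For the reverse inclusion I would induct on $n$. If $\nu$ has a coordinate equal to $\infty$, say $\nu_n=\infty$, then only the $X_n$-free monomials of $f$ contribute to $in_\nu(f)$, so $in_\nu(f)=in_{\nu'}(f|_{X_n=0})$ with $\nu'=(\nu_1,\dots,\nu_{n-1})$; the inductive hypothesis gives zeros of $f|_{X_n=0}$ with tropicalisation near $\nu'$, and appending a zero last coordinate gives zeros of $f$ with tropicalisation near $\nu$, so $\nu\in Trop(f)$. Hence assume $\nu$ finite; since (with the integral data above) the right-hand set is a rational polyhedral complex and $Trop(f)$ is closed, it is enough to treat $\nu\in\Q^n$ with $in_\nu(f)$ not a monomial. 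Pick $s_i\in\Cp^\times$ with $v(s_i)=\nu_i$ and $\lambda\in\Cp^\times$ with $v(\lambda)=-\min_I(v(a_I)+\langle I,\nu\rangle)$, and put $h(\overline{X}):=\lambda\,f(s_1X_1,\dots,s_nX_n)$. Then $h$ converges on the closed unit polydisc, has integral coefficients, and its coefficients that are units are exactly those indexed by $vert_\nu(f)$; hence its reduction $\overline{h}$ over the (algebraically closed) residue field $k$ of $\Cp$ has support $vert_\nu(f)$, in particular is not a monomial, and a zero of $h$ with all coordinates of absolute value $1$ corresponds to a zero of $f$ with tropicalisation $\nu$. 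Everything is thereby reduced to the statement: \emph{a power series that converges on the closed unit polydisc, has integral coefficients, and whose reduction over $k$ is not a monomial, has a zero all of whose coordinates have absolute value $1$.}

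To prove this I would collapse to one variable. Choose positive integers $e_1,\dots,e_{n-1}$ such that $I\mapsto i_1e_1+\dots+i_{n-1}e_{n-1}+i_n$ is injective on the (finite) support of $\overline{h}$ — e.g. $e_j=M^{n-j}$ with $M$ larger than every exponent occurring in $\overline{h}$ — together with units $c_1,\dots,c_{n-1}\in\mathcal{O}_p^\times$, and set $g(Y):=h(c_1Y^{e_1},\dots,c_{n-1}Y^{e_{n-1}},Y)$, which converges on the closed unit disc. For each fixed degree only finitely many multi-indices contribute to the corresponding coefficient of $g$, so reduction commutes with this substitution and $\overline{g}(Y)=\overline{h}(\bar c_1Y^{e_1},\dots,\bar c_{n-1}Y^{e_{n-1}},Y)$ is a one-variable polynomial with the same number of monomials as $\overline{h}$, now of pairwise distinct degrees and all of nonzero coefficient; in particular $\overline{g}$ is not a monomial, and $g$ has a unit coefficient. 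By the Weierstrass preparation theorem in the Tate algebra $\Cp\langle Y\rangle$, $g=U\cdot P$ with $U$ a unit and $P\in\mathcal{O}_p[Y]$ monic with all roots integral; hence $\overline{P}=\overline{U}^{-1}\overline{g}$ is a monic, non-monomial polynomial over the algebraically closed field $k$, so it has a root $\bar\rho\neq 0$, and by Hensel's factorisation lemma $P$ — hence $g$ — has a root $\rho$ with $\overline{\rho}=\bar\rho$, so $|\rho|=1$. Then $(c_1\rho^{e_1},\dots,c_{n-1}\rho^{e_{n-1}},\rho)$ is a zero of $h$ with all coordinates units; undoing the substitutions yields a zero of $f$ with tropicalisation $\nu$, whence $\nu\in Trop(f)$.

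The main obstacle is the reverse inclusion, and within it the passage from the combinatorial hypothesis ``$in_\nu(f)$ is not a monomial'' to an honest zero of $f$: this is precisely where genuine non-archimedean analysis is needed — the Weierstrass/Newton-polygon theory of $\Cp\langle Y\rangle$ together with Hensel's lemma — and one must verify that the collapse to one variable is compatible with passing to the residue field. The remaining points (coordinates of $\nu$ at infinity, or $f$ divisible by a coordinate) are governed by the conventions defining $in_\nu$ and are absorbed by the induction, contributing nothing essential.
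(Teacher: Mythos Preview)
The paper does not itself prove this lemma: it explains the forward inclusion via the ultrametric inequality (exactly as you do in your first paragraph) and then simply cites \cite{Rabinoff} for the converse. Your proposal therefore goes well beyond what the paper offers, supplying a self-contained argument for the substantive direction.

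Your argument for the reverse inclusion is correct. The reduction to finite rational $\nu$, the rescaling to the unit polydisc, and the passage to the residue field are all sound; the finiteness of $vert_P(f)$ recalled in the paper just before the lemma is precisely what makes the right-hand side closed in $P$ and justifies your density reduction. The key device --- collapsing to one variable by a monomial substitution $X_i\mapsto c_iY^{e_i}$ with exponents chosen so that $I\mapsto\langle I,e\rangle$ is injective on the finite support of $\overline{h}$ --- is a clean elementary trick: it preserves the number of residual monomials, so $\overline{g}$ is not a monomial either, and one-variable Weierstrass preparation over $\Cp$ then produces a unit root. Two small remarks: the units $c_i$ are harmless but unnecessary (take them equal to $1$); and Hensel is not actually needed at the last step, since $P$ already splits completely over the algebraically closed field $\Cp$ and the factorisation of $\overline{P}$ shows directly that some root of $P$ is a unit. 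The convergence of $g$ on the closed unit disc and the commutation of reduction with the substitution both hold because only finitely many multi-indices contribute to each $Y$-degree, as you note.

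Since the paper imports the result wholesale from Rabinoff, there is no in-paper proof to compare against. Your route is more elementary and hands-on than the general affinoid machinery in \cite{Rabinoff}, and it works cleanly here because the domain is a product of discs, which makes both the rescaling and the monomial collapse available. The boundary cases you flag at the end (coordinates of $\nu$ at infinity, $f$ divisible by a variable) are indeed matters of convention for $in_\nu$ and are absorbed by your induction as claimed.
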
 
So, $Trop(f)$ is determined by $inv_\nu(f)$ i.e.\ by the coefficients of $f$.
$Trop(f)\cap \R^n$ is actually a rather simple subset of $\R^n$ : a polyhedral complex.
\begin{definition}
A \emph{polyhedron} is a finite intersection of half-hyperplane in $\R^n$. The dimension of a polyhedron $P$ is the dimension of the smallest affine subspace of $\R^n$ containing $P$.
We refer to \cite{Rabinoff} section 2 for the formal definitions of faces and other notion from convex geometry.
A \emph{polyhedral complex} is a finite collection $\Pi$ of polyhedra in $\R^n$ (called \emph{faces} or \emph{cells} of $\Pi$) such that 
\begin{itemize}
	\item if $P,P'\in \Pi$, $P\cap P'\not=\varnothing$, then $P\cap P'$ is a face of $P$ and a face of $P'$;
	\item for all $P\in \Pi$ if $F$ is a face of $P$ then $F\in \Pi$.
\end{itemize}
The \emph{support} of $\Pi$, denoted $\lvert\Pi\rvert$ is the set $\bigcup_{P\in \Pi}P$. The dimension of $\Pi$ is the dimension of the highest dimensional cell of $\Pi$.
\end{definition}
For $\nu\in Trop(f)\cap \R^n$, we define
$$\gamma_\nu = \{\nu'\in Trop(f)\cap \R^n\mid\ vert_{\nu'}(f)\supseteq vert_\nu(f)\}.$$
If $Trop(f)$ is non-empty and $f$ nonzero, the collection $\{\gamma_\nu, \nu\in Trop(f)\cap \R^n\}$ is a polyhedral complex in $\R^n$ of codimension at least $1$ (i.e.\ all maximal cells have dimension at most $n-1$). The support of this complex is exactly $Trop(f)\cap \R^n$. We will denote by $Trop(f)\cap \R^n$ the complex as well as its support.
\par Let $\pi: \N^n\times \R\longrightarrow \N^n$ denote the projection on the $n$ first coordinates. We define
$$ \check{\gamma}_\nu = \pi(conv(vert_\nu(f)));$$
where $conv()$ denotes the convex closure of the set in $\R^n$.
This a bounded polyhedron. The \emph{Newton complex of $f$} is the collection of polyhedra $\{\check{\gamma}_\nu\mid\ \nu\in P\}$. We denote by $New(f, P)$ this set or by $New(f)$ when $P$ is clear from the context. In general this set is not a polyhedral complex: some face of a polyhedron in $New(f)$ may not belong to $New(f)$. Indeed, a 
 face of a polyhedron $\check{\gamma}_\nu$ may correspond to the projection of a set $conv(vert_\nu(f))$ where $\nu\notin P$ (or $f$ is not convergent at elements of
  tropicalization $\nu$). It turns out that it is a polyhedral complex in the case where $f$ is polynomial (in which case we consider the set of all $\check{\gamma}_\nu$ for $\nu\in \R^n$). The support of $New(f)$ is 
$$\lvert New(f)\rvert =conv\{I\in \N\mid (I,val(a_I))\in vert_\nu(f)\mbox{ for some }\nu\in Trop(f)\cap \R^n\}.$$
We will also denote this support by $New(f)$. The complexes $New(f)$ and  $Trop(f)\cap \R^n$ are dual to each other in the following sense:
\begin{prop}[J. Rabinoff \cite{Rabinoff} Proposition 8.6.2]\label{Rabinoff prop 8.6.2}
\begin{enumerate}
	\item For all $\nu,\nu'\in Trop(f)\cap \R^n$, $\gamma_\nu$ is a face of $\gamma_{\nu'}$ iff $\check{\gamma}_{\nu'}$ is a face of $\check{\gamma_{\nu}}$.
	\item For all $\nu\in  Trop(f)\cap \R^n$, $\gamma_\nu$ and $\check{\gamma_\nu}$ are orthogonal in the sense that the linear subspaces of $\R^n$ associated to the affine spans of $\gamma_\nu$ and $\check{\gamma}_\nu$ are orthogonal. Furthermore, $dim(\gamma_\nu)+dim(\check{\gamma}_\nu)=dim(\R^n)$.
\end{enumerate}
\end{prop}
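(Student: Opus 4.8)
The plan is to reduce the statement to a purely combinatorial duality between the two polyhedral complexes attached to $f$, and to deduce everything from the definitions of $\gamma_\nu$ and $\check\gamma_\nu$ together with the fact that both are governed by the finite sets $vert_\nu(f)$. First I would fix the observation that the function $\nu\mapsto vert_\nu(f)$ is, on $Trop(f)\cap\R^n$, anti-monotone with respect to the face relation: $\gamma_\nu$ is by definition the locus of $\nu'$ with $vert_{\nu'}(f)\supseteq vert_\nu(f)$, so as one passes to a face of $\gamma_\nu$ the set $vert$ can only grow, while on the Newton side $\check\gamma_\nu=\pi(conv(vert_\nu(f)))$ grows precisely when $vert_\nu(f)$ grows. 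This immediately suggests that "face of" on one side corresponds to "contains" on the other, which is the content of part~1.

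For part~1, I would argue as follows. Suppose $\gamma_\nu$ is a face of $\gamma_{\nu'}$. Pick a relative interior point of $\gamma_{\nu'}$; for such a point the set $vert(f)$ equals $vert_{\nu'}(f)$ and is minimal among all points of $\gamma_{\nu'}$, whereas on the (lower-dimensional) face $\gamma_\nu$ the set $vert$ jumps up to $vert_\nu(f)\supseteq vert_{\nu'}(f)$. Taking convex hulls and projecting, $\check\gamma_{\nu'}=\pi(conv(vert_{\nu'}(f)))\subseteq\pi(conv(vert_\nu(f)))=\check\gamma_\nu$, and one checks this inclusion realizes $\check\gamma_{\nu'}$ as a face of $\check\gamma_\nu$: the linear functional $(\nu',1)$ (suitably perturbed toward $\nu$) attains its minimum on $conv(vert_\nu(f))$ exactly along $conv(vert_{\nu'}(f))$, hence $\check\gamma_{\nu'}$ is cut out of $\check\gamma_\nu$ by a supporting hyperplane. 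The converse direction is symmetric, using that $\nu\in\gamma_{\nu'}$ iff $vert_{\nu'}(f)\subseteq vert_\nu(f)$ iff $\check\gamma_\nu\subseteq\check\gamma_{\nu'}$ as a face. Since $\nu,\nu'$ were arbitrary points of $Trop(f)\cap\R^n$, this gives the stated equivalence, and at the same time shows that $New(f)$ is built from the faces $\check\gamma_\nu$ in a way order-reversing to $Trop(f)\cap\R^n$.

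For part~2, the key point is that $\gamma_\nu$ is, up to translation, the normal cone to the face $conv(vert_\nu(f))$ of $conv(vert(f))$, intersected with $Trop(f)\cap\R^n$. Concretely, $\nu'\in\gamma_\nu$ forces the linear functional $(\nu',1)$ to be constant on $vert_\nu(f)$ and hence on its convex hull; so every direction $\nu'-\nu''$ tangent to $\gamma_\nu$ annihilates every difference $I-I'$ with $I,I'$ vertices of $conv(vert_\nu(f))$, i.e.\ the tangent space of $\gamma_\nu$ is orthogonal to the affine span of $\check\gamma_\nu$. Conversely any direction orthogonal to $\check\gamma_\nu$ keeps $(\,\cdot\,,1)$ constant on $vert_\nu(f)$, so it is tangent to $\gamma_\nu$ within $Trop(f)\cap\R^n$; this gives that the two linear spaces are exact orthogonal complements of each other, whence $\dim(\gamma_\nu)+\dim(\check\gamma_\nu)=n$.

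The main obstacle I anticipate is not the orthogonality computation but the bookkeeping needed to make "face of" precise on the Newton side, since — as the text already warns — $New(f)$ need not be a genuine polyhedral complex for non-polynomial $f$: a face of some $\check\gamma_\nu$ may arise only from a $\nu$ outside $P$ (or where $f$ diverges). So one must be careful to phrase part~1 only for $\nu,\nu'$ actually lying in $Trop(f)\cap\R^n$ and to check that the supporting-hyperplane argument produces a face which is itself of the form $\check\gamma_{\nu'}$ with $\nu'\in Trop(f)\cap\R^n$, rather than a spurious face. This is exactly where the finiteness of $vert_P(f)$ and lemma~8.4 (that $\nu\in Trop(f)$ iff $in_\nu(f)$ is not a monomial) are used: they guarantee that every face of $conv(vert(f))$ which meets the relevant normal directions is genuinely hit by some admissible $\nu$. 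Once that correspondence is nailed down, parts~1 and~2 follow formally; since these are results quoted verbatim from \cite{Rabinoff}, I would in the paper simply cite the relevant statements there and restrict the proof to indicating the dictionary above.
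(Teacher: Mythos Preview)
The paper gives no proof of this proposition at all: it is stated as a result of Rabinoff and simply cited from \cite{Rabinoff}, with no argument or sketch provided. Your proposal therefore goes well beyond what the paper does; your outline of the normal-fan/Newton-polytope duality is the standard one and is essentially correct, and your closing remark that in the paper one would ``simply cite the relevant statements there'' is exactly what the author does.
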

The above proposition implies that we have one-to-one correspondence between cells of $Trop(f)\cap \R^n$ and positive dimensional polyhedra in $New(f)$.
\begin{Exp}\label{Newton polydrehon 2}
 Let $f(x,y)=px+x^p+y^p$. We have drawn the tropicalization and the Newton polygon of $f$ in figure \ref{figure}.
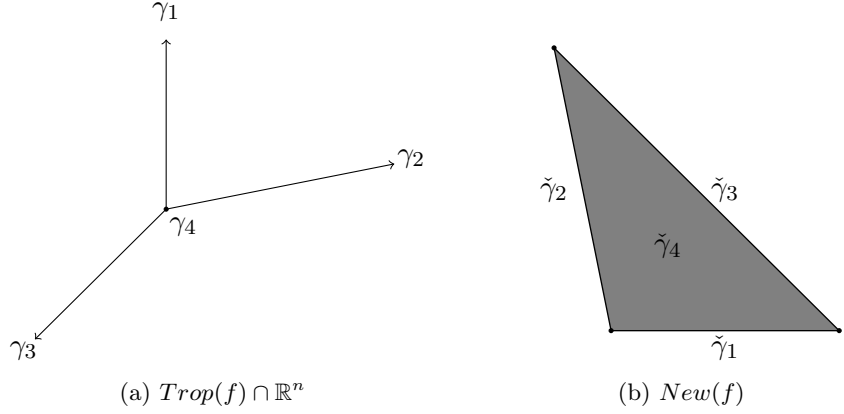
\begin{figure}
\centering
\subfloat[$Trop(f)\cap \R^n$]{
\begin{tikzpicture}[scale=0.75]\label{figure1}
   \draw[->] (2.5,2.5)--(0.2,0.2);
   \node at (0.0,0.0) {$\gamma_3$};
   \draw[->] (2.5,2.5)--(2.5,5.5);
   \node at (2.5,6) {$\gamma_1$};
   \draw[->] (2.5,2.5)--(6.5,3.3);
   \node at (6.8,3.4) {$\gamma_2$};
   \draw[fill=black](2.5,2.5) circle (1.0pt);
   \node at (2.8,2.2) {$\gamma_4$};
\end{tikzpicture}}
\hspace{1cm}
\subfloat[$New(f)$]{
\begin{tikzpicture}[scale=0.75]\label{figure2}

		\path[fill=gray] (1.0,2.0) -- (5.0,2.0)  -- (0.0,7.0) -- cycle;
    \draw[fill=black](1.0,2.0) circle (1.0pt);	
    \draw[fill=black](5.0,2.0) circle (1.0pt);
    \draw[fill=black](0.0,7.0) circle (1.0pt);
    \draw[line width=0.5] (1.0,2.0)-- (5.0,2.0);
		\draw[line width=0.5] (1.0,2.0)-- (0.0,7.0);
		\draw[line width=0.5] (0.0,7.0)-- (5.0,2.0);
		\node at (3.0,1.7) {$\check{\gamma}_1$};
		\node at (0.0,4.5) {$\check{\gamma}_2$};
		\node at (3.0,4.5) {$\check{\gamma}_3$};
		\node at (2.0,3.5) {$\check{\gamma}_4$};
\end{tikzpicture}}
\caption{The tropicalization and Newton complex of $px+x^p+y^p$.}
    \label{figure}
\end{figure}
Where in these figures, we take $P=(-\infty,+\infty)^2$ (with the obvious extensions of the definitions). If $P=[r,\infty)\times[s,\infty)$, then $Trop(f)\cap \R^n$ is the intersection between the set described in the above figure and $P$. $New(f)$ is the collection of all $\check{\gamma_i}$ such that $\gamma_i\cap P$ has the same dimension that $\gamma_i$.
\end{Exp}

One of the main result of \cite{Rabinoff} is a generalization of the classical result on Newton polygons for power series in $\Zp$. It relates the number of solutions  with a given valuation to the (mixed) volume of some polyhedron in $New(f)$. First let us define the notion of mixed volume:
\begin{definition} Let $P_1,\cdots , P_n$ be bounded polyhedra in $\R^n$. The \emph{Minkowsky sum of $P_1,\cdots ,P_n$} is 
$$ P_1+\cdots+P_n = \{v_1+\cdots +v_n\mid\ v_i\in P_i\}.$$
For $\lambda\in \R_{\geq 0}$, we set $\lambda P_i=\{\lambda v\mid\ v\in P_i\}$. We define the function 
$$\begin{array}{rll}
	V_{P_1\cdots P_n}: \R_{\geq 0}^n & \longrightarrow & \R \\
	(\lambda_1,\cdots , \lambda_n) &\longmapsto & vol(\lambda_1P_1+\cdots +\lambda_nP_n)
\end{array}$$
where $vol$ is the usual Euclidean volume. The function $V_{P_1\cdots P_n}$ is actually a homogeneous polynomial in $\lambda_1\cdots \lambda_n$ of degree $n$. The \emph{mixed volume} $MV(P_1\cdots P_n$) is defined to be the coefficient of the $\lambda_1\cdots\lambda_n$-term of $V_{P_1\cdots P_n}$.
\end{definition}
\begin{Remark} The function $MV$ is monotonic. So, if $P_1,\cdots, P_n\subset P$,
$$MV(P_1,\cdots , P_n)\leq MV(P,\cdots , P)=Vol(P).$$
\end{Remark}
\begin{theorem}[J. Rabinoff \cite{Rabinoff} Theorem 11.7]\label{Rabinoff 11.7} Let $f_1,\cdots , f_n\in \Zp\langle P\rangle$. Then for all $\nu \in \bigcap_i Trop(f_i)\cap \R^n$ isolated in the interior of $P$, let $\check{\gamma_i}=\pi (vert_\nu(f_i))\in New(f_i)$. Then
$$ \left|  \bigcap_iV(f_i;\{\nu\})\right|\leq MV(\check{\gamma_1},\cdots , \check{\gamma_n}). $$
\end{theorem}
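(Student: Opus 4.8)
The plan is to deduce Theorem~\ref{Rabinoff 11.7} from the general machinery of tropical analytic geometry developed in \cite{Rabinoff}, following the same route that in the polynomial case leads from Bernstein's theorem to the Bernstein--Kushnirenko bound. First I would record the setup: since $\nu$ is an isolated point of $\bigcap_i Trop(f_i)\cap\R^n$ and $\bigcap_i V(f_i)$ is finite, the fibre $\bigcap_i V(f_i;\{\nu\})$ is a finite set of points of $\Cp^n$, all of tropicalization $\nu$. The quantity I want to bound is an intersection multiplicity localized at the ``tropical point'' $\nu$, and Rabinoff's main intersection-theoretic result expresses this localized intersection number in terms of the initial forms $in_\nu(f_i)$ together with the combinatorics of the dual cells. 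Concretely, for a point $\overline{x}$ with $trop(\overline{x})=\nu$, whether $f_i(\overline{x})=0$ depends, to leading order, only on $in_\nu(f_i)$ evaluated at the ``unit part'' of $\overline{x}$; so I would reduce the counting problem over $\Cp$ to a counting problem for the system $in_\nu(f_1),\dots,in_\nu(f_n)$ of Laurent polynomials over the residue field, restricted to the torus.

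Second, I would invoke the classical Bernstein--Kushnirenko theorem for that reduced system: the number of solutions (with multiplicity) of a system of $n$ Laurent polynomials in $(\bar k^\times)^n$ is bounded above by the mixed volume of their Newton polytopes, with equality for generic coefficients. The Newton polytope of $in_\nu(f_i)$ is precisely $\check\gamma_i=\pi(conv(vert_\nu(f_i)))$ by the identity $vert_\nu(f)=\{(I,v(a_I))\mid a_I\overline X^I \text{ is a monomial of } in_\nu(f)\}$ recorded just before Lemma~8.4. Hence the mixed-volume bound for the residual system reads $MV(\check\gamma_1,\dots,\check\gamma_n)$, which is exactly the asserted right-hand side. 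The passage from the $\Cp$-count to the residual count must be an \emph{inequality} (not equality), and this is where I would lean on Rabinoff's comparison results: each $\Cp$-solution in the fibre over $\nu$ specializes to a $\bar k^\times$-solution of the initial system, and the multiplicities only add up on the side of $\Cp$, so the count upstairs is at most the count downstairs, which is at most the mixed volume.

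The key steps in order are therefore: (1) use isolatedness of $\nu$ and finiteness of $\bigcap V(f_i)$ to guarantee that $\bigcap_i V(f_i;\{\nu\})$ is a genuinely finite, isolated piece of the analytic intersection; (2) identify, via the duality of Proposition~\ref{Rabinoff prop 8.6.2} and Lemma~8.4, the local tropical data at $\nu$ with the Newton polytopes $\check\gamma_i$ and the initial forms $in_\nu(f_i)$; (3) apply the specialization/comparison lemmas of \cite{Rabinoff} to bound $\lvert\bigcap_i V(f_i;\{\nu\})\rvert$ by the number of torus solutions of the system of initial forms; (4) apply Bernstein--Kushnirenko to bound that number by $MV(\check\gamma_1,\dots,\check\gamma_n)$.

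The main obstacle I anticipate is step (3): making precise the specialization from analytic zeros over $\Cp$ to zeros of the initial-form system over the residue field, and checking that multiplicities behave correctly (i.e. that a single residual solution can only ``absorb'' finitely many $\Cp$-solutions and that the total count does not exceed the residual intersection number). Technically this amounts to a careful use of the formal/rigid-geometric models underlying $\Zp\langle P\rangle$ and of the flatness and properness statements in \cite{Rabinoff} that justify semicontinuity of fibre lengths under reduction; isolatedness of $\nu$ is exactly what lets us localize and avoid contributions from other tropical strata. Once that comparison is in hand, the combinatorial input is entirely standard and the bound follows. Since this statement is quoted verbatim from \cite{Rabinoff}, in the paper itself I would simply cite it; the sketch above indicates how its proof goes.
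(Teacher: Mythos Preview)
Your proposal is appropriate: the paper does not prove this theorem at all but simply cites it from \cite{Rabinoff}, and you correctly note this at the end of your proposal. Your sketch of how the argument in \cite{Rabinoff} proceeds (reduction to initial forms over the residue field followed by the Bernstein--Kushnirenko bound) is a reasonable outline of the underlying mechanism, though strictly speaking nothing beyond the citation is required here.
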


We fix now $F$ a family of restricted analytic functions like in section \ref{Effective Weierstrass system} i.e.\ such that the set of $\mathcal{L}_F$-terms is closed under derivation (in a effective way). We will also assume that the set of  $\mathcal{L}_{F}$-terms has an effective generalised Weierstrass bound. Let $W_F^{(0)}$ denote the set of $\mathcal{L}_{F}$-terms.
\par We will now prove that if $f_1,\cdots ,f_n\in W_F^{(0)}$ then uniformly over the parameters $\overline{y}$, we can compute a bound on the number of isolated points in $\bigcap Trop(f_i)$ and on the number of zeros  in $\bigcap_i V(f_i)$ with tropicalization $\nu$  (for a fixed isolated valuation $\nu$ in $\bigcap Trop(f_i)\cap\R_{>0}^n$, the bound will not depend on the choice of $\nu$). Actually, these number will be bound by a recursive function depending on some $d(g)$'s where the $g$'s could be $f_i$ or some of their derivatives.

\par The key result is that we can compute a set in which lives the support of $New(f)$:
\begin{lemma}\label{recursive box} Let $f\in W_F^{(0)}$. Then, we can effectively find an integer $E(f)$ such that for all $\overline{y}\in \Zp^m$, either $f(\overline{X},\overline{y})$ is identically zero or $New(f(\overline{X},\overline{y}))\subseteq B_{\max} (E(f))$. 
\end{lemma}
In this lemma, $B_{\max} (E)$ denotes the set $\{I\in \R^n\mid\ \max_k\{|i_k|\}\leq E\}$. Note also that we have identified $New(f)$ and its support.
\begin{proof}
Let us recall that an element of $New(f)$ is the projection of a set $vert_\nu(f)$ (for $\nu\in \R^n$, $\nu=trop(\overline{x})$ for some $\overline{x}\in (\mathcal{O}_p^*)^n$) i.e.\ is the set of indexes $J$ such that $v(a_J(\overline{y}))+\langle\nu ,J\rangle$ reaches the minimum of the set $\{v(a_I(\overline{y}))+\langle\nu ,I\rangle;\ I\in \N^n\}$ for some $\nu\in [0,\infty)^n$. So, it is sufficient to show that for all $\nu\in [0,\infty )^n$ the projection of the set $vert_\nu(f)$ is contained in $B_{\max} (E(f))$ for suitable (computable) $E(f)$.
\par As $f\in W_F^{(0)}$, we know that there exists $d(f)$ (computable) such that for all $\lvert I\rvert\geq d(f)$,
 $$a_I(\overline{Y}) = \sum_{|J|<d(f)}b_{IJ}(\overline{Y}) a_J(\overline{Y}), $$
where $b_{IJ}\in W_F$ (the Weierstrass system generated by the $\mathcal{L}_F$-terms) with $\|b_{IJ}\|<1$.
Fix $\overline{y}\in \Zp$ and assume $f(\overline{X},\overline{y})\not\equiv 0$ i.e.\ $a_I(\overline{y})\not=0$ for some $|I|<d(f)$. First, let us remark that for all $I$ such that $i_1,\cdots, i_n\geq d(f)$, for all $\overline{x}\in (\mathcal{O}_p^*)^n$, we can find $J$ with $|J|< d(f)$ such that
\begin{align*}
v(a_I(\overline{y}))+\langle I,trop(\overline{x})\rangle &\geq  \min_{|K|< d(f)}\{ v(b_{IK}(\overline{y}))+v(a_K(\overline{y})) +\langle K,trop(\overline{x})\rangle\}\\
&> v(a_J(\overline{y}))+\langle J,trop(\overline{x})\rangle. 
\end{align*}
If $n=1$, take $E(f)=d(f)$ and we are done by the above inequality.
\par In the general case, we already know by the above inequality that no index $I$ that satisfies $i_1,\cdots, i_n\geq d(f)$ can be a point of $vert_\nu(f)$. It remains to bound indexes in $vert_\nu(f)$ with at least one coordinate less than $d(f)$.
\par  Fix $1\leq k\leq n$ and $1\leq s\leq d(f)$. Fix a coefficient $I$ whose $k$th coordinate is $s$. Then, $a_I(\overline{y})\overline{X}^I$ is the $(i_1,\cdots, i_{k-1},s,i_{k+1},\cdots, i_n)$th coefficient of the function $f_{s,k}(\overline{X},\overline{y})X_k^s$ where
 $$f_{s,k}(\overline{X},\overline{y})= (1/s!)\frac{\partial^s f}{\partial x_k^s}(X_1,\dots, X_{k-1},0,X_{k+1},\cdots, X_n,\overline{y}).$$
  Then, as $f_{s,k}\in W_F^{(0)}$ (by hypothesis this is closed under derivation), there is $d(f,s,k):=d(f_{s,k})$ such that for all $I$ with $\max_{j\not=k} \{i_j\}\geq d(f,s,k)$,
$$v(a_I(\overline{y}))+\sum_{l\not= k} i_l v(x_l) >\min \{v(a_{(j_1,\cdots, j_{k-1},s,j_{k+1},\cdots, j_n)}(\overline{y}))+\sum_{l\not= k} j_l v(x_l)\}.$$
where the min is taken in $\{J': |J'|=|(j_1,\cdots, j_{k-1},j_{k+1},\cdots, j_n)|<d(f,s,k)\}$. We set:
 $$E'(f)=\max_{k\leq n}\max_{ s\leq d(f)}\{d(f,k,s),d(f)\}.$$
 If $n=2$, we can take $E(f)=E'(f)$. Otherwise, we can compute $E(f_{s,k})$ for all $s\leq d(f)$ and $k\leq n$ by induction: we proceed like above with $f=f_{s,k}$. Then, we take $E(f)=\max_{s,k}\{E(f_{s,k}), E'(f)\}$.
\end{proof}
\begin{Remark}Note that in the above lemma, we can make vary the parameter $\overline{y}$ over $\mathcal{O}_p^m$. Then, it does not change the bound $E(f)$. This is also true for all the below result: the bounds we find also works if the parameters vary over $\mathcal{O}_p$ instead of $\Zp$.
\end{Remark}

We can now bound effectively the number of roots of a system $f$ with isolated tropicalization.
\begin{lemma}\label{effective isolated roots} Let $f=(f_1,\cdots, f_n)\subset W_F^{(0)}$. Then, one can compute integers $D_1$ and $D_2$ (depending only on $f$) such that for all $\overline{y}\in \Zp^m$, either $\bigcap V(f_i(\overline{X},\overline{y}))$ is infinite, or $\bigcap Trop(f_i(\overline{X},\overline{y}))\cap\R^n$ has less than $D_1$ isolated points and for each such a point $\nu$, the cardinality of $\bigcap V(f_i(\overline{X},\overline{y}),\{\nu\})$ is less than $D_2$. 
\end{lemma}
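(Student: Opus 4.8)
The plan is to combine Lemma \ref{recursive box} with Theorem \ref{Rabinoff 11.7} and the classical bounds (Strassmann, mixed volume) in a uniform way. First I would apply Lemma \ref{recursive box} to each $f_i$ to get computable integers $E(f_i)$ such that, for every $\overline{y}\in\Zp^m$ with $f_{i,\overline{y}}\not\equiv 0$, we have $New(f_{i,\overline{y}})\subseteq B_{\max}(E(f_i))$. Set $E=\max_i E(f_i)$, so all the Newton complexes that arise live inside the fixed box $B_{\max}(E)$, regardless of the parameter. This is the key point that makes the bounds uniform: although the polyhedra $\check\gamma_\nu$ themselves depend on $\overline{y}$, they are all faces of a projection of a subset of the finite lattice-point set $\N^n\cap B_{\max}(E)$, and there are only finitely many such polyhedra (a number depending only on $n$ and $E$, hence computable).

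Next I would bound $D_2$. For a fixed isolated $\nu\in\bigcap_i Trop(f_{i,\overline{y}})\cap\R^n$, Theorem \ref{Rabinoff 11.7} gives $|\bigcap_i V(f_{i,\overline{y}};\{\nu\})|\le MV(\check\gamma_1,\dots,\check\gamma_n)$ where $\check\gamma_i=\pi(vert_\nu(f_{i,\overline{y}}))\in New(f_{i,\overline{y}})$. Since each $\check\gamma_i\subseteq B_{\max}(E)$, monotonicity of mixed volume (the Remark after the definition) gives $MV(\check\gamma_1,\dots,\check\gamma_n)\le MV(B_{\max}(E),\dots,B_{\max}(E))=Vol(B_{\max}(E))=(2E)^n$. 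So I can take $D_2=(2E)^n$, which is visibly computable and independent of $\overline{y}$ and of $\nu$.

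For $D_1$, the bound on the number of isolated points of $\bigcap_i Trop(f_{i,\overline{y}})\cap\R^n$, I would argue as follows. By Proposition \ref{Rabinoff prop 8.6.2}, the cells of $Trop(f_{i,\overline{y}})\cap\R^n$ correspond to positive-dimensional faces of $New(f_{i,\overline{y}})$; an isolated point $\nu$ of the intersection $\bigcap_i Trop(f_{i,\overline{y}})\cap\R^n$ corresponds (via the duality, applied to each factor) to an $n$-tuple of faces $\check\gamma_i$ of the respective Newton complexes whose orthogonal complements intersect in the single point $\nu$. Since each $New(f_{i,\overline{y}})$ is a subcomplex of the set of faces of $conv(\N^n\cap B_{\max}(E))$, there are at most some computable number $C(n,E)$ of possible faces for each $i$, hence at most $C(n,E)^n$ possible $n$-tuples, and each such tuple gives rise to at most one isolated point $\nu$. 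So I can take $D_1=C(n,E)^n$ where $C(n,E)$ is the (easily bounded, e.g. by $2^{|\N^n\cap B_{\max}(E)|}$) number of subsets of $\N^n\cap B_{\max}(E)$. This is crude but computable, which is all that is required.

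The main obstacle I expect is making the correspondence ``isolated $\nu$ $\leftrightarrow$ tuple of Newton faces'' precise and checking that it is genuinely at-most-one-to-one: one must verify that an isolated point of an intersection of polyhedral complexes is determined by the minimal cells of each complex containing it, and that these cells are recovered as $\gamma_{\nu}$ in each $Trop(f_{i,\overline{y}})$, whose duals $\check\gamma_\nu$ then lie in $New(f_{i,\overline{y}})\subseteq B_{\max}(E)$. The subtlety is that ``isolated in the intersection'' is not the same as ``vertex of each complex'', so one cannot simply count vertices; instead one counts tuples of (arbitrary-dimensional) cells and uses that their intersection, when zero-dimensional, is a single point. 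Once this bookkeeping is done, $D_1$ and $D_2$ are both explicit functions of the $E(f_i)$ and hence computable, completing the proof.
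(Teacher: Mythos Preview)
Your proposal is correct and follows essentially the same route as the paper: apply Lemma~\ref{recursive box} to confine every $New(f_{i,\overline{y}})$ to a fixed box, then use monotonicity of mixed volume together with Theorem~\ref{Rabinoff 11.7} for $D_2$, and use the duality of Proposition~\ref{Rabinoff prop 8.6.2} to bound the number of cells (hence of candidate tuples, hence of isolated intersection points) for $D_1$. The only differences are cosmetic: the paper records $D_2=E(f)^n$ (using that the Newton complex actually sits in $[0,E]^n\subset B_{\max}(E)$) rather than your $(2E)^n$, and for $D_1$ it phrases the count in terms of ``half-hyperplanes'' and takes $\prod_i d_i$ with $d_i$ the maximal number of faces of a polytope in $B_{\max}(E(f_i))$, whereas you use the cruder $C(n,E)^n$; your explicit discussion of why a tuple of cells yields at most one isolated point is a point the paper leaves implicit.
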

In particular, under these hypotheses, whenever the system $f$ has finitely may solutions in $(\mathcal{O}_p)^n$, it has at most $D_1\cdot D_2$ solutions in $(\mathcal{O}_p^*)^n$ with isolated tropicalization with positive valuation by Theorem \ref{Rabinoff 11.7}.
\begin{proof}
Assume that we have chosen $\overline{y}$ such that the number of solutions of the system is nonzero and finite. Then, by Lemma \ref{recursive box}, $New(f_{i,\overline{y}})$ is contained in $B_{\max} (E(f_i))$. So, for all $i$ and $\nu$, $\check{\gamma}_\nu(f_i):=\check{\gamma}_\nu(f_i(\overline{X},\overline{y}))\subset B_{\max} (E(f_i))$. As $MV$ is monotonic, 
$$MV(\check{\gamma}_\nu(f_1),\cdots ,\check{\gamma}_\nu(f_n))\leq MV(B_{\max} (E(f)), \cdots , B_{\max} (E(f)))= E(f)^n,$$
where $E(f)=\max_i E(f_i)$. Take $D_2=E(f)^n$. By Theorem \ref{Rabinoff 11.7}, $D_2$ satisfies the conditions of our lemma.
\par Let us recall that the points of $\bigcap Trop(f_i(\overline{X},\overline{y}))\cap \R_{>0}^n$ are determined by a system of linear equations. Each equation corresponds to an half-hyperplane contained in $Trop(f_i(\overline{X},\overline{y}))\cap\R_{>0}^n$ (determined by some $\gamma_\nu$). As these half-hyperplanes are in bijection with the faces of $New(f_i)$ (the $\check{\gamma}_v$'s, see Proposition \ref{Rabinoff prop 8.6.2}), we can bound the number of systems:
\par  Consider the  polygon contained in $B_{\max} (E(f_i))$  with the maximal number of faces (say this polygon has $d_i$ faces). Note that $d_i$ is computable. Then, $Trop(f_i(\overline{X},\overline{y}))\cap\R^n$ has at most $d_i$ half-hyperplanes. So, the number of isolated points contained in the intersection of all $Trop(f_j(\overline{X},\overline{y}))\cap \R^n$ is no more than $\prod_i d_i$. We define $D_1$ to be the product of all $d_i$'s.
\end{proof}

\par In general, we have points in the tropicalization of the system that are not isolated. But in fact, after a sufficiently small perturbation, the system can be reduced to this case. The next results and definitions come from \cite{Rabinoff}:
\begin{definition}
Let $P=\bigcap_i\{v\in \R^n\mid\ \langle u_i,v\rangle \leq a_i\}$ be a polyhedron in $\R^n$. A \emph{$\varepsilon$-thickening} of $P$ is a polyhedron of the form
$$ P' = \bigcap_i\{v\in N_\mathbf{R}\mid\ \langle u_i,v\rangle \leq a_i+\varepsilon\}.$$
More generally, if $\Pi$ is a polyhedral complex, a \emph{thickening} $\mathcal{P}$ of $\Pi$ is a collection of polyhedra of the form $\mathcal{P}=\{P'\mid\ P\in \Pi\}$, where $P'$ is a thickening of $P$. We set
$$ \lvert\mathcal{P}\rvert=\bigcup P'\qquad \mbox{and}\qquad int(\mathcal{P})=\bigcup int(P'),$$
where $int(P')$ denotes the interior of $P'$.
\end{definition}
So far, we use series from $\Zp\{\overline{X}\}$ in our language.  
We will now assume that the terms in our language are in $\Zp\langle P\rangle$ for some $P=\prod [r_i,\infty)$ which contains $P_0:=\prod [0,\infty)$ in its interior i.e.\ the series are overconvergent.
\par Let $f_1,\cdots, f_n\in \Zp\langle P\rangle$ be a system of overconvergent series. Let $C$ be a connected component of $\bigcap_i Trop(f_i,P)$ and $C_0$ be its restriction to $P_0$. Then, if we apply a small perturbation to the system, the component $C_0$ becomes a finite set of point:
\begin{lemma}\label{Rabinoff 12.5}
Let $C$ be a connected component of $\bigcap Trop(f_i,P)$. Then there is $\delta$, $P'$ a $\delta$-thickening of $P_0$ contained in $P$ and $\mathcal{P}$ a thickening of $C_0$ contained in $P'$ such that $\lvert\mathcal{P}\rvert\cap \bigcap_i Trop(f_i,P')=C'$ where $C'=C\cap P'$. There also exist $\overline{v}_1,\cdots , \overline{v}_n\in \N^n$ and $\varepsilon\in\Q_{\geq 0}$ such that for all $t\in (0,\varepsilon]$, the intersection 
$$ \lvert\mathcal{P}\rvert\cap \bigcap_i \Big(Trop(f_i,P')+t\overline{v}_i\Big)$$
is a finite set of points contained in $int(P')$. Furthermore, each of these point is determined by the intersection of affine polyhedra $\gamma_v$ contained in the tropicalizations $ Trop(f_i,P')$.
\end{lemma}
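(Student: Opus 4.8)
The statement is essentially Lemma~12.5 of \cite{Rabinoff} rephrased in the present notation, so the plan is to follow Rabinoff's perturbation argument. The first thing I would record is the finiteness that makes everything go through: since each $f_i$ converges on $B(\widetilde{P})$, the set $vert_{\widetilde{P}}(f_i)$ is finite, so $Trop(f_i,\widetilde{P})$ is a finite polyhedral complex, and --- because the exponents and the valuations $v(a_I)$ are integers while $P$ and $\widetilde{P}$ have rational endpoints --- all of its cells are rational polyhedra with integer facet normals. Hence $T:=\bigcap_i Trop(f_i,\widetilde{P})$ is a finite rational polyhedral complex with finitely many connected components, and $\bigcap_i Trop(f_i,P')$ is just $T$ cut down to $P'$ for every polyhedron $P'$ between $P$ and $\widetilde{P}$.

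Next I would isolate $C$. Using that $T$ has only finitely many cells, there is a uniform $\rho>0$ bounding below both the distance between connected components of $T$ that lie in distinct components and the distance from the component through $C$ to $\partial\widetilde{P}$. Taking $\delta$ small compared with $\rho$ and letting $P'$ be the $\delta$-thickening of $P$, the component $C'$ of $\bigcap_i Trop(f_i,P')$ extending $C$ --- unique by the remark preceding the lemma --- stays at distance $>\rho/2$ from every other component. Choosing $\mathcal{P}$ to be a thickening of the subcomplex $C'$ by some $\delta'<\rho/2$ small enough that $|\mathcal{P}|\subseteq P'$ then yields $|\mathcal{P}|\cap\bigcap_i Trop(f_i,P')=C'$, which is the first assertion.

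The core of the argument is the perturbation step. Each $Trop(f_i,P')$ has pure codimension $1$ with rational maximal cells, so for a translation vector $w=(w_1,\dots,w_n)\in(\R^n)^n$ the intersection $\bigcap_i(\gamma_i+w_i)$ of cells $\gamma_i$ of $Trop(f_i,P')$ has, for generic $w$, dimension $\sum_i\dim\gamma_i-n(n-1)$ when it is nonempty; since $\dim\gamma_i\le n-1$, the only way to reach dimension $0$ is with all $\gamma_i$ maximal, in which case the generic intersection is a single transverse point, and there are only finitely many such tuples of cells. The tuples $(\gamma_1,\dots,\gamma_n)$ that could force a positive-dimensional or non-transverse intersection impose finitely many closed conditions on $w$, each a proper affine-algebraic subset of $(\R^n)^n$ defined over $\Q$ (this is just the standard genericity of transversality, with rationality preserved because the $\gamma_i$ are rational). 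I would then pick $(v_1,\dots,v_n)\in(\Z^n)^n$ so that the ray $\{\,t(v_1,\dots,v_n):t>0\,\}$ meets the union of these bad loci only at finitely many values of $t$, and let $\varepsilon\in\Q_{>0}$ be smaller than the least positive such value. For $t\in(0,\varepsilon]$ the intersection $\bigcap_i(Trop(f_i,P')+tv_i)$ is then a finite set of transverse intersection points, each of the form $\bigcap_i(\gamma_{\nu_i}+tv_i)$ for maximal cells $\gamma_{\nu_i}$, which gives the last assertion. Finally, since $Trop(f_i,P')+tv_i\to Trop(f_i,P')$ as $t\to0$, every point of this intersection lying in $|\mathcal{P}|$ is within distance $O(t)$ of $C'$, so after shrinking $\varepsilon$ once more they all fall inside $int(\mathcal{P})$.

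The hard part is that perturbation step: making precise the general-position statement that translating $n$ pure-codimension-one rational polyhedral complexes by a generic but integral and arbitrarily small amount leaves only transverse $0$-dimensional intersections, while simultaneously keeping those intersection points inside the thickening $\mathcal{P}$ when $C'$ may be unbounded. This is exactly what Rabinoff establishes using the duality between $Trop$ and $New$ (Proposition~\ref{Rabinoff prop 8.6.2}) together with a BKK-type genericity argument; the subtle points are that the ``generic'' translation directions can be taken rational (hence integral after clearing denominators) and that the bad values of $t$ accumulate only away from $0$, so that an honest $\varepsilon>0$ exists.
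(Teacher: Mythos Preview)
Your proposal is correct and follows the same route as the paper, which simply records that the first assertion is immediate from the definitions and defers the perturbation step to Rabinoff (the paper cites lemma~12.13 rather than 12.5 of \cite{Rabinoff}, so you may want to double-check the numbering). Your sketch of the transversality argument is more detailed than anything the paper itself provides, but it accurately summarizes Rabinoff's method.
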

This follows from the definitions and from the proof of Lemma 12.5 in \cite{Rabinoff}. In fact, we know that $Trop(f_i,P)$ is the finite reunion of half-hyperplanes. After perturbation, the hyperplanes of each $Trop(f_i)$ intersect in at most one point. Indeed, up to a small perturbation, the intersection of $n$ hyperplanes in $\R^n$ is either empty or one point. These are the set of point in the above interesection. Furthermore, if $\delta$ is small enough, we can assume that $P'$ does not contains branching points on its boundary (i.e.\ the limit points of the intersection when $t$ tends to zero). In that case, the points in the above intersection are defined by intersection of half-hyperplanes in each $Trop(f_i,P')$ and are in the interior of $P'$.
\begin{Remark}
Note that $\bigcap Trop(f_i,P_0)$ has a finite number of component. We can apply the above lemma to each component so that $\lvert\mathcal{P}\rvert\cap \bigcap_i \Big(Trop(f_i,P')+tv_i\Big)$ is a finite set of points contained in the interior of $P'$ where $\mathcal{P}$ is a thickening of $P_0$.
\end{Remark}
\par We fix $t\in \Q$ and $\xi$ in some algebraic extension $K$ of $\Qp$ such that $v(\xi)=t$. Let $\overline{v}\in \N^n$.
We denote by $\widetilde{f}$ the image of the map:
$$
\begin{array}{rll}
	K\langle P\rangle &\longrightarrow &K \langle t\overline{v}+P\rangle\\
	f(x_1,\cdots , x_n)&\longmapsto & f(x_1\xi^{-v_1},\cdots , x_n\xi^{-v_n}).
\end{array}
$$
Then, $Trop(\widetilde{f})=Trop(f)+t\overline{v}$. Let us remark that $Trop(\widetilde{f})$ and $New(\widetilde{f})$ are independent of the choice of $\xi$ with $v(\xi)=t$ (as these sets are determined uniquely by the valuations of the coefficients of $\widetilde{f}$).
\par Let $f_1,\cdots, f_n$ be overconvergent series in $W_F^{(0)}$. Then, by Lemma \ref{Rabinoff 12.5}, the intersection of the tropicalization of the $\widetilde{f_i}$ is a finite set of points (for any suitable choice of $t,\overline{v}$). By Lemma \ref{effective isolated roots}, we can give an upper bound for the number of solutions of the system $(\widetilde{f_1},\cdots, \widetilde{f_n})$. This upper bound is effective if we assume that $W_F^{(0)}$ has an effective extended Weierstrass bound. For let us remark that if we replace $P=\prod [r_i,\infty)$ by a $\varepsilon'$-thickening of $P_0$ for some $\varepsilon'$ small enough, the extended Weierstrass bound is an upper bound for $d(\widetilde{f_i})$. This will be sufficient to estimate the number of solution of the system $(f_1,\cdots,f_n)$ in $\mathcal{O}_p$.

\begin{definition} Let $f_1,\cdots, f_n\in W_F^{(0)}$ overconvergent. If $\nu\in \cap Trop(f_i)$ is isolated, we define
$$i(\nu,Trop(f_1),\cdots, Trop(f_n)):=MV(\check{\gamma_1},\cdots, \check{\gamma_n})$$
where $\check{\gamma_i}:=\pi (vert_\nu(f_i))\in New(f_i)$.
\par Let $\mathcal{P}$ as in the remark after Lemma \ref{Rabinoff 12.5}. We define
$$i(\mathcal{P} ,Trop(f_1),\cdots , Trop(f_n))=\sum_\nu i(\nu, Trop(\widetilde{f}_1),\cdots, Trop(\widetilde{f}_n))$$
where the sum is taken over all $\nu\in \lvert\mathcal{P}\rvert\cap \bigcap_i \Big(Trop(f_i,P)+t\overline{v}_i\Big)$.
\end{definition}

\begin{lemma}\label{effective roots in a connected component}
Let $f_1,\cdots, f_n\in W_F^{(0)}$ where $W_F^{(0)}$ has an extended effective Weierstrass bound. Then, we can compute $T$ such that for all $\overline{y}$,
$$i(\mathcal{P} ,Trop(f_1),\cdots , Trop(f_n))\leq T$$
where $T$ does not depends on any of the choice $\mathcal{P}$, $t,v_i$.
\end{lemma}
\begin{proof}
Let $\widetilde{f}_i(\overline{X})=f_i(\overline{X}t^{-v_i})$ where $t\in \mathcal{O}_p$ has positive sufficiently small valuation. After perturbation, there is only finitely many isolated points by Lemma \ref{Rabinoff 12.5}. Then as in \ref{effective isolated roots}, one can compute an upper bound for the number of roots with isolated tropicalization in $P'$ and for the number of isolated points. For we replace $d(f_i)$ by $d(\widetilde{f}_i)$ in the proof of Lemma \ref{recursive box}. This latter is computable as $W_F^{(0)}$ has an extended effective Weierstrass bound.  We obtain a computable upper bound for $i(\mathcal{P} ,Trop(f_1),\cdots , Trop(f_n))$.
\end{proof}

We relate now the solution of the system $(\widetilde{f}_1,\cdots, \widetilde{f}_n)$ to the solution of $(f_1,\cdots, f_n)$.

\begin{theorem}\label{Rabinoff 12.11} Let $f_1,\cdots , f_n\in W_F^{(0)}$ overconvergent. Assume that the system $V(f_1,\cdots, f_n)$ has a finite number of solutions with tropicalization in $P=\prod (-r_i,\infty]$ (for all $r_i<0$ small enough) and no solutions with zero coordinates.
$$\left| \bigcap_i V(f_i;P')\right|\leq i(\mathcal{P}, Trop(f_1)\cdots Trop(f_n))$$
for all $\mathcal{P}, P'$ with $\varepsilon,\delta$ given in Lemma \ref{Rabinoff 12.5} small enough.
\end{theorem}
\begin{proof}
First, taking $r_i$ small enough, we may assume that $f_j\in\Zp\langle P\rangle$ with $P=\prod_i [-r_i,\infty)$, that the tropicalization of any zero of the system $f_1,\cdots, f_n$ is in $\prod (r_i,T_i)$ (for some $T_i$ large enough) and $Trop(f_i)$ has no branching point on the boundary of $P$. Let $g_i(\overline{X},T)=f_i(X_1T^{v_{1i}},\cdots ,X_nT^{v_{ni}})$ where $\overline{v}_i$ is given by Lemma \ref{Rabinoff 12.5}. Let $Y=\cap_i V(g_i)$ and $Y_t= \cap_i  V(g_{i,t})$ with $g_{i,t}:=f_i(X_1t^{-v_{1i}},\cdots, X_nt^{-v_{ni}})$. Note that $g_{i,1}=f$. So, there are $r'_i=-\delta$, such that $Trop(Y_1)\subset P':= \prod (r'_i,T_i)$. On the other hand, for all $t$ with $v(t)<\varepsilon$ ($\varepsilon$ given by Lemma \ref{Rabinoff 12.5}), $Trop(Y_t)$ is contained in $|\mathcal{P}|\cap \bigcap_i\left( Trop(f_i,P')+tv_i\right)$  i.e.\ in the interior of $P'$ (for $\varepsilon$ small enough, $T_i$ large enough).
So, by \cite{Rabinoff} Theorem 9.8, the cardinality of $Y_t$ (with tropicalization in $P'$) does not depends on $t$. Furthermore, as each point in $Trop(Y_t)$ is isolated, by Theorem \ref{Rabinoff 11.7}, the number of points in $Y_t$ (with $v(t)>0$) with valuation $\nu$ is bounded by the mixed volume of the Newton polygons corresponding to $\nu$.
 Then $\left| \bigcap_i V(f_i;P')\right|\leq i(\mathcal{P}, Trop(f_1)\cdots Trop(f_n))$.
\end{proof}

With this theorem, we are now able to prove the main theorem of this section:

\begin{theorem}\label{effective bound of system} Let $F$ be a family of restricted analytic function overconvergent so that $W_F^{(0)}$ has an extended effective Weierstrass bound.
Let $f=(f_1,\cdots, f_n)\in W_F^{(0)}$. Then, there exists $S(f)$ computable such that for all $\overline{y}\in \Zp^n$, either the system $(f_1(\overline{X},\overline{y}),\cdots, f_n(\overline{X},\overline{y}))$ has infinitely many roots or it has less than $S(f)$ roots with tropicalization in $P=\prod(-r_i,\infty)$ for all $r_i>0$ with $|r_i|$ small enough.
\end{theorem}
\begin{proof} First, let us remark that if $\cap V(f_i,P)$ has a finite number of solutions, then up to a change of variable of the type $X_i\rightarrow X_i-s_i$, we can assume that it has no solution with zero coordinate. We add extra-parameters $\overline{s}$ and replace $f_i$ by $f'_i(\overline{X},\overline{S}):=f_i(\overline{X-S})$.
By the above theorem, if $\cap V(f'_i,P)$ is finite then it is bounded by $i(\mathcal{P}, Trop(f'_1)\cdots Trop(f'_n))$. The result follows now from Lemma \ref{effective roots in a connected component}
\end{proof}

\begin{Remark}
Let $f_1,\cdots, f_{n+m}\in \Zp\{X_1,\cdots, X_n,\overline{Y}\}[X_{n+1},\cdots, X_{n+m}]$ convergent on $B\times\Cp$ and satisfying the hypotheses of the above theorem. Then we can compute a bound for the number of solutions of the system in $(\mathcal{O}_p)^n\times (\C_p)^m$. Indeed, in this case, the size of the box computed in Lemma \ref{recursive box} with respect to the variable $X_{n+i}$ is determined by the degree of $f_k$ as polynomial in $X_{n+i}$. Therefore, using Theorem \ref{Rabinoff 12.11}, for all $r_i$, we can compute a bound for the number of solution with tropicalization in $P \times\prod_i[r_i,\infty)$. Furthermore, we remark that the bound $S(f)$ obtained in this case is independent on the choice of $r_i$ (as so is the box from Lemma \ref{recursive box}; in the polynomial case it depends only on the degree of the polynomials) which means that it is a bound for the number of solutions in $(\mathcal{O}_p)^n\times (\C_p)^m$.
\end{Remark}

\section{Effective model-completeness}\label{Effective model-complenetess}

\par We can now prove the second main theorem:

\begin{theorem}\label{effective model completeness of L_F} Let F be an effective family of restricted analytic functions such that the set of $\mathcal{L}_F$-terms is closed under derivation. Let $\widetilde{F}$ be the extension of $F$ by all decomposition functions of elements in $F$. Assume that each $\mathcal{L}_{\widetilde{F}}$-term is overconvergent and that $W_F^{(0)}$ has an extended effective Weierstrass bound.
\par Then, the theory of $\mathbb{Z}_{p,\widetilde{F}}$ is effectively strongly model-complete in the language $\mathcal{L}_{\widetilde{F}}$. 
\end{theorem}
\begin{proof}
For, as we have seen in section \ref{Effective Weierstrass system}, it is actually sufficient to prove that $W_{\widetilde{F}}$ is an effective Weierstrass system. Let $f\in W_{\widetilde{F},n}^{(k)}$. We have to show that $S(f)$ is computable. We proceed by induction on $k$ and we show that for any $f\in W_{\widetilde{F},n}^{(k)}$, $S(g)$ is computable where $g=f$ or one of its derivatives. The basic step of the induction follows immediately from our hypothesis.
\par So assume that for all $n$, for all $k\leq m$ and for all $g\in W_{\widetilde{F},n}^{(k)}$, $S(g)$ and all its derivatives can be bounded. Let $H\in W_{\widetilde{F},n}^{(m+1)}$. We want to compute $S(H)$ (or more generally, $S(G)$ where $G$ denotes a derivatives of $H$). By definition of the Weierstrass system generated by the $\mathcal{L}_{\widetilde{F}}$-terms, $H$ is a polynomial combination one of the following possibilities:
\begin{enumerate}[(a)]
	\item $h\in W_{\widetilde{F},n}^{(m)}$. In that case, we can compute $S(h)$ by inductive hypothesis.
	\item There are $f\in W_{\widetilde{F},n}^{(m)}$ and a permutation $\sigma$ such that $h(\overline{X})=f(X_{\sigma(1)},\cdots , X_{\sigma(n)})$. In that case, we can compute $S(f)$ by inductive hypothesis and $S(h)=S(f)$. The same holds for any derivative of $h$.
	\item There is $f\in W_{\widetilde{F},n}^{(m)}$ such that $f$ is invertible in $\Zp\{\overline{X}\}$ and $h=f^{-1}$. In that case, $S(f)=S(h)=1$. Also, $S\left(\frac{\partial h}{\partial X_i}\right)=S\left(-\frac{\partial f}{\partial X_i}h^2\right)=S\left(\frac{\partial f}{\partial X_i}\right)$ (this is also bounded by $S(f^2\frac{\partial f}{\partial X_i})$ and similarly for the higher derivatives.
	\item There are $f,g\in W_{\widetilde{F},n}^{(m)}$ such that $h=f/g(0)$. In that case, we can compute $S(f)$ by inductive hypothesis and $S(h)=S(f)$. The same holds for any derivative of $h$.
	\item There are $f\in W_{\widetilde{F},n+1}^{(m)}$ of order $d$ in $X_{n+1}$ and $g\in W_{\widetilde{F},n+1}^{(m)}$ such that $h$ is one of the functions $a_0, \cdots , a_{d-1}\in \Zp\{X_1,\cdots , X_{n}\}$ or $Q\in \Zp\{X_1,\cdots , X_{n+1}\}$ given by the Weierstrass division theorem.
\end{enumerate}
Note that in case (a)-(d), one also get that $d(h)$ is determined by $f$.
In the last case, $h$ (or any of its derivatives) is actually determined by a system of equations (see facts  \ref{strong def W-coeff}  to \ref{strong def W-deriv1} in Proposition \ref{strong def W-syst}). More generally, let $h(\overline{X}) = P(\overline{X},a_0(\overline{X}),\cdots, a_s(\overline{X}))$ where $P$ is any polynomial with coefficients in $\Z$. Then,
\begin{claim}\label{claim thm effective model completeness of L_F}
$S(h)$ and $S(h')$ can be bounded effectively where $h'$ is a derivative of $h$.
\end{claim}
\begin{proof}
We want to compute a bound of $S(h)$. Let $h(Z,\overline{Y}) = P(Z,a_0(Z,\overline{Y}),\cdots, a_s(Z,\overline{Y}), \overline{Y})$. We want to bound the number of solutions of the equation $h(Z,\overline{y})=0$ for any $\overline{y}\subset\Zp^{n+k-1}$ such that this number is finite (where $Z=X_1$ and $\overline{y}$ denotes now $(x_2,\cdots, x_{n-1},y_1,\cdots, y_k)$).  Fix $\overline{y}$ such that the number of roots is finite. Note that if we add an extra-parameters, we can assume that all roots are nonsingular. Let us remark that $z$ is a solution of $h(Z,\overline{y})=0$ if $z,t_0,\cdots , t_s, a_0,\cdots a_s$ are solutions of the system of equations:
$$\left\{
\begin{array}{l}
f(t_0,z,\overline{y})=0\\
\vdots \\
f(t_s,z,\overline{y})=0\\
\left(\begin{array}{cccc}
1 & t_0 &\cdots & t_0^{s}\\
\vdots & \vdots && \vdots \\ 
1 & t_s &\cdots & t_s^{s}
\end{array}\right)
\left(\begin{array}{c}
a_0\\
\vdots\\
a_{s}
\end{array}\right) =
\left(\begin{array}{c}
g(t_0,z,\overline{y})\\
\vdots\\
g(t_s,z,\overline{y})
\end{array}\right)\\
P(z,a_0,\cdots, a_s, \overline{y})=0
\end{array}\right.$$ 
if $t_i\not=t_j$ for all $i\not=j$. To make sure that this last condition is satisfied, we introduce the variables $t_{ij}$ $0\leq i<j\leq s$ and add to the system the equations:
$$ t_{ij}\cdot(t_i-t_j)-1=0.$$
\par Note that this system has finitely many solutions in $(\mathcal{O}_p)^{2s+3}\times (\Cp)^{(s^2+s)/2}$ if $h(Z,\overline{y})$ has finitely many solutions in $\mathcal{O}_p$. Conversely, the number of solution of $h(Z,\overline{y})$ is equal to the sum of the number of the solutions of the different systems taking in account all possible multiplicities of the  $t_i$'s.
\par So, the number of solutions of $h(Z,\overline{y})$ is determined by the sum of the number of solutions of systems $(f_1^{( i)},\cdots f_{N_i}^{( i)})$ where $f_j^{(i)}\in W_{\widetilde{F}}^{(m)}$ (and $i$ varies over all possible multiplicities). Going down by induction (by Proposition \ref{strong def W-syst}, the zeros of any element of $W^{m+1}$ is determined by a system of functions in $W^{(m)}$), we can actually assume that the functions $f_j^{(i)}$ are in $W_{\widetilde{F}}^{(0)}$ (i.e.\ are $\mathcal{L}_{\widetilde{F}}$-terms). So, by Theorem \ref{effective bound of system}, one can compute a bound $S_i$ for the number of solutions of the system $(f_1^{( i)},\cdots f_{N_i}^{( i)})$. Take $S=\sum S_i$. Then $S$ is a bound for $d(h)$.
\par Let $h'$ be a derivative of $h$. We can compute $d(h')$ in a similar way using the definitions given in the facts \ref{strong def W-unit}  and \ref{strong def W-deriv1} in Proposition \ref{strong def W-syst}.
\end{proof}
The cases where $h$ is equal to a function $Q$  like in (e) or one of its derivative is obtained similarly using systems given in Proposition \ref{strong def W-syst}. With the same argument, we can compute $S(H)$ for a general function in $W_{\widetilde{F},n}^{(m+1)}$. Indeed, $H$ is just a polynomial combination of functions of type (a)-(e) and so is also determined by a system of equations whose functions (and their derivatives) are $\mathcal{L}_F$-terms.

\end{proof}

\section{Application: effective model-completeness of the $p$-adic exponential ring}\label{exponential ring}

\par Let us recall that the natural exponential function $exp(x)=\sum x^n/n!$ is convergent iff $v(x)>1/(p-1)$. Unlike the real field, the $p$-adic field does not carry a natural structure of exponential field. Yet we can use $exp(X)$ to define a structure of exponential ring: Let $E_p$ be the map $\Zp\longrightarrow \Zp: x\longmapsto exp(px)$ (if $p\not=2$, in the other case, we set $E_2(x)=exp(4x)$). It induces a structure of exponential ring on $\Zp$ i.e.\ $(\Zp,+, -, \cdot, 0,1,E_p)$ is a ring and $E_p$ is a morphism of groups from $(\Zp,+)$ to $(\Zp^\times, \cdot)$. This structure is a natural equivalent to the structure $(\R, +, -, \cdot, 0, 1, <, exp\mathord{\upharpoonright}_{[-1,1]})$. It is known that the real exponential field is decidable if Schanuel's conjecture is true \cite{Macintyre-Wilkie}. We use the results of this paper as a first step to a $p$-adic equivalent result. In this section, we apply our results to the set $F=\{E_p\}$. In this case we denote the language $\mathcal{L}_{F}$ by $\mathcal{L}_{exp}$.
\begin{Remark} For the rest of this section, we will assume $p\not=2$. The case $p=2$ should be obvious: we have to replace $p$ by $4$ when relevant.
\end{Remark}
\par The model-completeness in this case was first done by A. Macintyre in \cite{Macintyre4}. A first easy observation is that we don't need to add all decomposition functions. Indeed, let $K=\Qp (\alpha)$ and $V=\Zp[\alpha]$. As, $E_p(\sum \alpha^ix_i)=\prod E_p(\alpha^ix_i)$, it is sufficient to add to our language the functions $c_{i,j}$ such that:
$$E(\alpha^i x)=c_{0,i}(x)+\cdots + c_{d-1,i}(x)\alpha^{d-1}.$$
Following Macintyre's terminology, we call these functions \emph{trigonometric functions}.
Let $\mathcal{L}_{pEC}$ be the expansion of the language $\mathcal{L}_{exp}$ by all trigonometric functions of $K_n$ (where  $(K_n)_{n\in\N}$ is the tower of extensions defined in section \ref{Decomposition functions}). Let $\Z_{pEC}$ be the structure with underlying set $\Zp$ and natural interpretations for the symbols of $\mathcal{L}_{pEC}$. Then, by Theorem \ref{strong model-completeness},
\begin{theorem}[Macintyre \cite{Macintyre4}] $Th(\Z_{pEC})$ is strongly model-complete.
\end{theorem}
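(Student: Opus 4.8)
The statement is an instance of Theorem~\ref{strong model-completeness}, so the plan is to check its two hypotheses for $F=\{E_p\}$ and then to verify that the language $\mathcal{L}_{\widetilde{F}}$ it produces agrees with $\mathcal{L}_{pEC}$.

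First I would record that $E_p$ is a restricted analytic function. Since $\exp(X)=\sum X^n/n!$ converges exactly on $\{v(X)>1/(p-1)\}$ and, for $x\in\Zp$, $v(px)=1+v(x)\geq 1>1/(p-1)$ when $p\geq 3$ (resp. $v(4x)=2+v(x)\geq 2>1$ when $p=2$), the series defining $E_p$ lies in $\Zp\{X\}$. Next, the set of $\mathcal{L}_{exp}$-terms is closed under derivation: an $\mathcal{L}_{exp}$-term is built from variables and integer constants by $+$, $\cdot$ and applications of $E_p$, and the chain rule together with $E_p'=pE_p$ gives $\partial_{X_i}E_p(t)=p\,E_p(t)\,\partial_{X_i}t$, so an immediate induction on the construction of the term (the polynomial cases being the Leibniz rule) shows that every $\partial_{X_i}t$ is again an $\mathcal{L}_{exp}$-term; iterating handles all $\partial^k/\partial X_i^k$. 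By Theorem~\ref{strong model-completeness}, $\Z_{p,\widetilde{F}}$ is therefore strongly model-complete in $\mathcal{L}_{\widetilde{F}}$, where $\widetilde{F}$ adjoins to $E_p$ all its decomposition functions $c_{i,E_p,d}$ in the fields $K_d$ of section~\ref{Decomposition functions}.

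It then remains to identify $\mathcal{L}_{\widetilde{F}}$ with $\mathcal{L}_{pEC}$, and here the homomorphism property $E_p(a+b)=E_p(a)E_p(b)$ is the point. For $\overline{y}\in\Zp^{N_d}$ put $y=\sum_j y_j\beta_d^j\in V_d$; then $E_p(y)=\prod_j E_p(\beta_d^j y_j)$, and writing each one-variable factor in the basis $1,\beta_d,\dots,\beta_d^{N_d-1}$ as $E_p(\beta_d^i x)=\sum_j c_{j,i}(x)\beta_d^j$ introduces precisely the functions adjoined in $\mathcal{L}_{pEC}$. Expanding the product and reducing modulo the minimal polynomial of $\beta_d$ — a polynomial operation with integer coefficients — exhibits each $c_{i,E_p,d}(\overline{y})$ as a $\Z$-polynomial in the $c_{j,i}$'s; conversely the $c_{j,i}$ are themselves among the $c_{i,E_p,d}$. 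Thus $\mathcal{L}_{\widetilde{F}}$ and $\mathcal{L}_{pEC}$ are mutually term-definable, strong model-completeness is preserved under such a change of language (a quantifier-free formula in one language rewrites to a quantifier-free formula in the other, and the uniqueness-of-witness clause is unaffected), so $\Z_{pEC}$ is strongly model-complete. One could also, via Proposition~\ref{interpretation of V_d}, record that $(V_d,+,\cdot,0,1,P_n,E_p)$ stays strongly definable in this smaller language.

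The only delicate step is the last one: one must check that the passage from the full system of decomposition functions of $E_p$ on each $V_d$ to the smaller family in $\mathcal{L}_{pEC}$ is genuinely given by polynomials over $\Z$, and not merely by an existential formula — this is exactly what multiplicativity of $E_p$ buys us. Without this reduction the argument still goes through verbatim, but it would only yield the literal conclusion of Theorem~\ref{strong model-completeness} in the a priori larger language $\mathcal{L}_{\widetilde{F}}$ instead of Macintyre's sharper formulation in $\mathcal{L}_{pEC}$.
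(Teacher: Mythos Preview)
Your proposal is correct and follows the same route as the paper: verify closure of the $\mathcal{L}_{exp}$-terms under derivation via $E_p'=pE_p$, invoke Theorem~\ref{strong model-completeness}, and use the multiplicativity $E_p\big(\sum_i y_i\beta_d^i\big)=\prod_i E_p(\beta_d^i y_i)$ to pass from the full family of decomposition functions to the one-variable functions $c_{j,i}$ defining $\mathcal{L}_{pEC}$. The paper compresses this last step into the single remark ``it is sufficient to add to our language the functions $c_{i,j}$'' before stating the theorem; you make the interdefinability explicit, which is a welcome clarification.

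One small imprecision: when you write that the $c_{i,E_p,d}$ are $\Z$-polynomials in the $c_{j,i}$'s, note that the structure constants for multiplication in $V_d=\Zp[\beta_d]$ with respect to the basis $1,\beta_d,\dots,\beta_d^{N_d-1}$ lie a priori only in $\Q\cap\Zp$ (the paper takes $Q_d\in\Q[X]$). This does not affect your conclusion, since multiplication by a fixed element of $\Q\cap\Zp$ is strongly definable (write $qy=rx$ for the relation $y=(r/q)x$), so strong model-completeness still transfers; but ``term-definable over $\Z$'' is slightly stronger than what you actually need or have.
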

Using Theorem \ref{effective model completeness of L_F}, we will now prove that this model-completeness is effective.
First, let $W^{(0)}_E$ be the set of polynomial combinations (over $\Z$) of variables $X_k$, exponential $E_p(X_k)$ and decomposition functions $c_{i,j}(X_k)$ for all $i,j,k$. Let $W_E$ be the Weierstrass system as defined in Section \ref{Weierstrass system generated by} where we replace the set of $\mathcal{L}_F$-terms by $W^{(0)}_E$ at step zero of the definition. Let us remark that as Weierstrass systems are closed under composition $W_E$ is equal to the Weierstrass system generated by the $\mathcal{L}_{pEC}$-terms. On the other hand the condition that $W_F^{(0)}$ has an extended effective Weierstrass bound in Theorem \ref{effective model completeness of L_F} is now easier to prove as it is sufficient to prove it for elements of $W^{(0)}_E$ rather that for the set of $\mathcal{L}_{\widetilde{F}}$-terms. We give now a proof of this condition. Let us start with a simpler computation:
\begin{lemma}\label{Exponential Weierstrass bound}
There is a recursive function $d:\Z[\overline{X},\overline{Y},E_p(\overline{X}),E_p(\overline{Y})]\rightarrow \N$ such that for all $f(\overline{X},\overline{Y})=\sum_L c_l(\overline{X})\overline{Y}^L\in \Z[\overline{X},\overline{Y},E_p(\overline{X}),E_p(\overline{Y})]$, for all $L$ with $|L|\geq d(f)$ and $M$ with $|M|<d(f)$, there is $b_{LM}(\overline{X})\in \Zp\{\overline{X}\}$ with $\|b_{LM}\|<1$ and $\|b_{LM}\|\rightarrow 0$ as $|L|\rightarrow \infty$ such that 
$$c_L(\overline{X}) = \sum_{|M|<d(f)}b_{LM}(\overline{X})c_M(\overline{X}).$$
\end{lemma}

\begin{proof} Let $|\overline{X}|=n$ and $|\overline{Y}|=m$.
Let $f(\overline{X},\overline{Y})=\sum_{|I|\leq D}a_I(\overline{X},\overline{Y})E_p(\langle(\overline{X},\overline{Y});I\rangle)$ (where $\langle\cdot;\cdot\rangle$ denotes the scalar product and $D$ is the polynomial degree of $f$) with $a_I(\overline{X},\overline{Y})=\sum_{|J|\leq D} a_{IJ}(\overline{X})\overline{Y}^J\in \Z[\overline{X},\overline{Y}]$. Then,
$$c_L(\overline{X})=\frac{1}{L!}\frac{\partial^L f}{\partial \overline{Y}^L}(\overline{X},\overline{0}).$$
We will use classical multi-index notation: if $K,L\in \N^n$,
$$K!:=k_1!\cdots k_n!\qquad \binom{L}{K}:=\binom{l_1}{k_1}\cdots \binom{l_n}{k_n} \qquad L^K:=l_1^{k_1}\cdots l_n^{k_n},$$
$$L-K:=(l_1-k_1,\cdots l_n-k_n).$$
By Leibniz rule, for all $L$ with $|L|>D$
\begin{align*}
c_L(\overline{X}) &=\frac{1}{L!} \sum_{|I|\leq d}\sum_{K\leq L} \binom{L}{K} \frac{\partial^K a_I}{\partial \overline{Y}^K}(\overline{X},\overline{0}) \frac{\partial^{L-K} E_p(\langle(\overline{X},\overline{Y});I\rangle)}{\partial \overline{Y}^{L-K}}(\overline{X},\overline{0})\\
 &= \sum_{|I|,|K|\leq D}a_{IK}(\overline{X}){I'}^{L-K}\frac{p^{|L|-|K|}}{(L-K)!}E_p(\langle\overline{X},I'\rangle);
\end{align*}
where in the second line, we sum over $|K|\leq D$ as $a_{IK}=0$ for all $|K|>D$ ($I'$ denote the $m$ last coordinates of $I$).
Let $d(f)\in \N$ (to be defined later) and $V_{LT}\in \Zp$ (also to be defined later) with $\|T\|=\min\{T_i\}>D$, then
$$c_L(\overline{X})-\sum_{|T|<d(f)}V_{LT}c_T(\overline{X})=$$
$$\sum_{|I|,|K|\leq D} a_{IK}(\overline{X})E_p(\langle\overline{X},I'\rangle)\left[\frac{p^{|L|-|K|}}{(L-K)!}{I'}^{L-K}-\sum_{|T|<d(f)}V_{LT}\frac{p^{|T|-|K|}}{(T-K)!}{I'}^{T-K} \right].$$
If we find $V_{LT}$ such that the part between the brackets $[\cdots]$ vanishes, $v_p(V_{LT})>0$ and tends to infinity as $L$ tends to infinity we are done (provided that $d(f)$ is computable). First, we remark that the first condition is true provided that $V_{LT}$'s are solutions of the linear system:
$$\sum_{|T|<d(f)}V_{LT}\frac{p^{|T|-|K|}}{(T-K)!}{I'}^{T-K} = \frac{p^{|L|-|K|}}{(L-K)!}{I'}^{L-K}$$
for all $|I|,|K|\leq D$. We will pick $T_1,\cdots, T_{v}$ ($v$ is the number of equations) such that $|T_i|\leq d(f)$ for all $i$. We will set $V_{LT}=0$ for all $T\not=T_i$. The last variables determines a linear system of $v$ linear equations with $v$ unknowns. Let $A$ be the matrix corresponding to this system i.e. the elements of the matrix are $\frac{p^{|T_i|-|K|}}{(T_i-K)!}{I'}^{T_i-K}$ , the lines are indexed over all $|K|,|I|<D$ and the columns over all $T_i$ ($1\leq i\leq v$).
\begin{claim} There are $T_i$, $\|T_i\|>C_i(f)$ $(1\leq i\leq v$)for some computable $C_i(f)$, such that $det\ A\not=0$.
\end{claim}
\begin{proof}
We prove it by induction on the number of variables taking advantage of the symmetries of $A$. If there is only one variable, this is obvious: $\frac{p^{|T|-|K|}}{(T-K)!}{I'}^{T-K}\not=0$. Notice that in the case where $I'$ has a zero coordinate (say its $k$th coordinate), then either the corresponding coordinate of $L$ is zero (and therefore of similarly for $T$), or the line in the linear system corresponding to $I'$ is trivial. In the first case, as the $k$th coordinate of $T$ is determined, we do an induction on length of $T$. In the second case, we can remove the line $I'$ from the system and a columns corresponding to $T_i$ (for any $i$ we pick). We set $V_{LT_i}$ to be $p$. So, in both case, we may assume that $I'$ as no zero coordinate.
\par We develop the determinant of $A$ along the last line: we get
$$det\ A=\sum_{|I|,|K|\leq D} (-1)^{|I|+|K|}\frac{p^{|T_v|-|K|}}{(T_v-K)!}{I'}^{T_v-K} det\ A_{KI},$$
where $A_{KI}$ denotes the corresponding minor. Note that by the symmetries of the matrix $A$, we will be able to apply the hypothesis of induction to $det\ A_{KI}$. So, we are done if we prove the following: assume that we have found $T_1,\cdots ,T_n$ (with $\|T_i\|>C_i(f)$ for some constant $C_i(f)$ for all $i\leq n$) such that any minor of the matrix $A$ that involves only $T_1,\cdots, T_n$ is non zero. $T_1,\cdots, T_n$ are now fixed. Let $A'$ be a minor involving only $T_1,\cdots, T_{n+1}$. We expand $det\ A'$ along the line of the variable $T_{n+1}$ then:
$$det\ A'=\sum_{I,K} R_{IK}\frac{p^{|T_{n+1}|-|K|}}{(T_{n+1}-K)!}{I'}^{T_{n+1}-K},$$
where $R_{SK}$ is (up to sign) the determinant of some minor involving $T_1,\cdots, T_n$; in particular, $R_{SK}\not=0$. We include everything that does not depends on $T_{n+1}$ in some new (nonzero) constant so that $det\ A'\not=0$ iff
$$\sum_{I,K} {R'}_{IK}\frac{{I'}^{T_{n+1}}p^{|T_{n+1}|}}{(T_{n+1}-K)!},\qquad (*)$$
is non zero (again ${R'}_{IK}\not=0$). Let $I'$ maximal in the lexicographic order among the indexes with at least one the coordinate maximal. Then if
$$p^{|T_{n+1}|}\sum_{K} {R'}_{IK}\frac{1}{(T_{n+1}-K)!},\qquad (**)$$
is non zero, in $(*)$, $\frac{{I'}^{T_{n+1}}}{(T_{n+1}-K)!}$ is the dominant term (for the real topology) as $T_{n+1}$ tends to infinity. Therefore, for $\|T_{n+1}\|>C$, it is non zero. The constant $C$ here depends on $|R'_{IK}|$ i.e.\ on the choice of $T_1,\cdots ,T_{n}$ but this can be done effectively: given $T_1,\cdots, T_n$, one can find effectively $C$ (and therefore $T_{n+1}$) such that $(*)$ is non zero. 
Note that we have to assume that the sum over $|I'|$ is not reduced to one term $I'=(1,\cdots, 1)$; in this latter case it is sufficient to prove that $(**)$ is nonzero.
\par For $\|T_{n+1}\|>C'$, $(**)$ is non zero, as its dominant term (for the real topology) corresponds to $K$ of minimal lexicographic order. One can find such constant $C'$ explicitety given fixed $T_1,\cdots, T_n$. Take $C_{n+1}(f)$ to be the max of $C,C'$ where we take all possible such constant for any choice of minor involving only $T_1,\cdots , T_n$. Take $T_{n+1}$ such that none of the minor $det\ A'$ vanishes (this can be done effectively: pick the smallest $T$ for the lexicographic order such that $\|T\|>C_{n+1}(f)$ and $T\not= T_i$ for $i=1,\cdots ,n$).
 This completes the inductive step and therefore the proof of the claim. 
\end{proof} Let $K_1(f)=\sum C_i(f)$ and $K_2(f)=K_1(f)+v+1$.
Now, we can assume $\|T_i\|>C_i(f)$ and $|T_i|< K_2(f)$ and that $det\ A\not=0$. For all $i$, we find that $V_{LT_i}= (det\ A)^{-1} A_i \frac{p^{|L|-|K|}}{(L-K)!}I^{L-K}$ ($A_i$ denote the $i$th line of $A^{-1}$).
\begin{claim}\label{estimate} $v_p(V_{LT_i})$ tends to infinity as $|L|$ tends to $\infty$. Futhermore for all $|L|>K_3(f)$ (for some computabe $K_3(f)$),  $v_p(V_{LT_i})>0$
\end{claim}
\begin{proof}
Let us give a rough estimate of the valuation of each of the following terms:
\begin{enumerate}[(a)]
	\item $v((det\ A)^{-1} A_i)$: as $|T_i|<K_2(f)$, $|I|,|K|\leq d$ it can take finitely many rational values (each of them can be computed). Let $G$ be the minimum of these values (we take the minimul over all values such that $det\ A$ is nonzero). Then, $v((det\ A)^{-1} A_i)\geq G$.
	\item $v(I^{L-K})$: as $|I|\leq D$, $v(i_j)\leq d$ for all $j$. So, the valuation is at least $-D|K|\geq -D^2$.
	\item $v(\frac{p^{|L|-|K|}}{(L-K)!})$: this is greater that $v(p^{|L|}/L!)-|K|$. By the classical evaluation of $v(L!)$ this is greater than $|L|(p-2)/(p-1)-|K|\geq |L|(p-2)/(p-1)-D$ (we assumed that $p\not=2)$.
\end{enumerate}
Putting everything together, we obtain that $v(V_{LT})\geq |L|(p-2)/(p-1)-D- D^2+G$. Surely, this tends to $\infty$ as $|L|\rightarrow \infty$. Furtermore, if $|L|>(-G+D+D^2)(p-1)/(p-2)$, this is positive. 
\end{proof}
Take $d(f)=\max\{(-G+D+D^2)(p-1)/(p-2),K_2(f)\}$ and $b_{LM}=V_{LM}$ as defined above ($V_{LM}=0$ if it is not determined by the linear system). Then all claims of the lemma are satisfied.
\end{proof}
Let us remark that we can generalise the above lemma in the following ways: Let $\Z[\overline{X}]^{EC}$ be the ring generated by $X_i, E_p(X_i)$ and $c_{jk}(X_i)$ for all $i\leq |\overline{X}|,j,k$. Then,
\begin{lemma}\label{d(f) pEC-term}
There is a recursive function $d:\Z[\overline{X},\overline{Y}]^{E,C}\rightarrow \N$ such that for all $f(\overline{X},\overline{Y})=\sum_L c_l(\overline{X})\overline{Y}^L\in \Z[\overline{X},\overline{Y}]^{E,C}$, for all $L$ with $|L|\geq d(f)$ and $M$ with $|M|<d(f)$, there is $b_{LM}(\overline{X})\in W_E$ with $\|b_{LM}\|<1$ and $\|b_{LM}\|\rightarrow 0$ as $|L|\rightarrow \infty$ such that 
$$c_L(\overline{X}) = \sum_{|M|<d(f)}b_{LM}(\overline{X})c_M(\overline{X}).$$
\end{lemma}
\begin{proof}
Let $f(\overline{X},\overline{Y})\in \Z[\overline{X},\overline{Y}]^{E,C}$. Any such elements is a polynomial combination of $\overline{X},\overline{Y},E_p(\alpha^j\overline{X}),E_p(\alpha^j\overline{Y})$. 
Indeed, let $c_j(X)$ be a trigonometric function function. Then, $c_j(X)=\sum_J \overline{u}^JE_p(\langle (X,\alpha);J\rangle$ (a finite sum, $\overline{u}$ a set of parameters). We define the series $f^+(\overline{X},\overline{Y},\overline{A},\overline{U})$ that is in the exponential polynomial such that $f^+(\overline{X},\overline{Y},\overline{\alpha},\overline{u})=f(\overline{X},\overline{Y})$. Assume that 
$$f(\overline{X},\overline{Y})=\sum_I a_I(\overline{X})\overline{Y}^I, $$
$$f^+(\overline{X},\overline{Y},\overline{A},\overline{U})=\sum_I a_I^+(\overline{X},\overline{A},\overline{U})\overline{Y}^I.$$
Then, $a_I(\overline{X})= a_I^+(\overline{X},\overline{Y},\overline{\alpha},\overline{u})$. By Lemma \ref{Exponential Weierstrass bound}, there is $d(f^+)$ and $b^+_{IJ}\in p\Zp\{\overline{X},\overline{A},\overline{U}\}$ such that 
$$a_I^+(\overline{X},\overline{Y},\overline{A},\overline{U})=\sum_{|J|<d(f^+)} a_J^+(\overline{X},\overline{Y},\overline{A},\overline{U})b^+_{IJ}(\overline{X},\overline{Y},\overline{A},\overline{U}).$$
Then, as $a_I(\overline{X})= a_I^+(\overline{X},\overline{Y},\overline{\alpha},\overline{u})\in \Zp\{\overline{X}\}$, we obtain $b_{IJ}\in p\Zp\{\overline{X}\}$ so that
$$a_I(\overline{X})=\sum_{|J|<d(f^+)} a_J(\overline{X})b_{IJ}(\overline{X}).$$
\end{proof}
The other generalisation is the computation of an extended Weirstrass bound:
\begin{prop}
The ring $W_E^{(0)}$ admits an extended effective Weierstrass bound.
\end{prop}
\begin{proof}
\par Let $f(\overline{X})\in \Z[\overline{X},E_p(\overline{X})]$. Let $t\in \mathcal{O}_p$ with positive valuation at most $1/(p-1)$; say $1/p^2$. 
Then, $f(t^{-1}\overline{X})=P(t^{-1}\overline{X},E_p(t^{-1}\overline{X}))$. Notice that $E_p(t^{-1}X_i)=exp(pt^{-1}X_i)$. Set $E_t(X):=exp(pt^{-1}X)$. Then, to prove the existence of the extended effective Weierstrass bound, we have to prove a version of Lemma \ref{Exponential Weierstrass bound} where we replace $E_p$ by $E_t$. The proof is similar. In claim \ref{estimate}, we have to estimate the valuation of $v((pt^{-1})^{|L|}/L!)$. This is possible as $v(t)=1/p^2<1/(p-1)$, so this valuation is guaranteed to tends to infinity as $|L|$ tends to infinity.
\par The same argument holds for elements of $W^{(0)}_E$ as in the last lemma.
\end{proof}

\begin{Remark} In \cite{Macintyre4}, A. Macintyre gives an algorithm that compute $S(f)$ (an upper bound for the number of roots in $\mathcal{O}_p$ for any choice of parameters such that it is finite) for any $\mathcal{L}_{pEC}$-term $f$. He uses an induction on the 'exponential height' for exponential terms and proceeds as in Lemma \ref{d(f) pEC-term} in general. This result also follows from the above proposition together with Theorem \ref{effective bound of system}. On the other hand, it seems that the bound obtained by Macintyre is sharper than ours.
\end{Remark}

\par Finally, by the last proposition and by Theorem \ref{effective model completeness of L_F},
\begin{theorem} $Th(\Z_{pEC})$ is effectively strongly model-complete.
\end{theorem}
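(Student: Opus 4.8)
The plan is to obtain the statement as a direct application of Theorem~\ref{effective model completeness of L_F} to the family $F=\{E_p\}$, once its hypotheses are checked and once it is observed that $\mathcal{L}_{pEC}$ is an adequate replacement for $\mathcal{L}_{\widetilde{F}}$. First I would record the structural hypotheses. The family $F=\{E_p\}$ is finite, hence recursively enumerable, and since $E_p'=p\,E_p$ (resp. $E_2'=4\,E_2$) the derivation algorithm $\mathcal{D}$ is trivial, so $F$ is an effective family of restricted analytic functions. The same identity together with the product and chain rules gives, by induction on term complexity, that the derivative of any $\Lexp$-term is again an $\Lexp$-term; hence the set of $\Lexp$-terms is closed under derivation, and by the remark at the end of Section~\ref{Decomposition functions} so is the set of $\mathcal{L}_{\widetilde{F}}$-terms. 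Next, $\exp(px)$ (resp. $\exp(4x)$) converges for $v(x)>\tfrac{1}{p-1}-1$ (resp. $v(x)>-1$), i.e. on an open ball strictly containing $\mathcal{O}_p$; since each decomposition function of $E_p$ is a fixed $\Zp$-linear combination of maps $x\mapsto E_p(\alpha^i x)$ evaluated at Galois conjugates, every $\mathcal{L}_{\widetilde{F}}$-term is a polynomial combination of functions convergent on a common open ball $B(f)\supseteq\mathcal{O}_p$.

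The remaining hypothesis of Theorem~\ref{effective model completeness of L_F} is that each $\mathcal{L}_{\widetilde{F}}$-term has an effective $B(f)$-Weierstrass bound, which is exactly the cited lemma of Macintyre \cite{Macintyre4} (see also \cite{MyThesis}): one argues by induction on the complexity of an exponential term $f(X,\overline{Y})$, at each step producing an effective bound, uniform in $\overline{Y}$, on the number of zeros of $f(X,\overline{y})$ in $B(f)$, and converting it via Strassmann's theorem into the integer $d(f)$ of Lemma~\ref{Denef-vdD lemma 1.4}. Granting this, I would then note that $\mathcal{L}_{pEC}$ and $\mathcal{L}_{\widetilde{F}}$ generate the same Weierstrass system: by the multiplicativity $E_p\!\left(\sum_i\alpha^i x_i\right)=\prod_i E_p(\alpha^i x_i)$, each decomposition function $c_{i,E_p,d}$ is a polynomial combination of the symbols $c_{i,j}$ of $\mathcal{L}_{pEC}$, and conversely each $c_{i,j}$ is one of the $c_{i,E_p,d}$; hence $W_{\widetilde{F}}$ is generated by the $\mathcal{L}_{pEC}$-terms and the effectivity conclusion transfers verbatim. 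Applying Theorem~\ref{effective model completeness of L_F} (whose strong model-completeness content for $F=\{E_p\}$ is the already-stated theorem of Macintyre via Theorem~\ref{strong model-completeness}) then yields that $Th(\Z_{pEC})$ is effectively strongly model-complete.

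The main obstacle is the effective $B(f)$-Weierstrass bound for exponential terms — that is, bounding, uniformly in the parameters, the number of zeros in a ball containing $\mathcal{O}_p$ of an iterated exponential--polynomial expression. This is precisely the place where one must run Macintyre's combinatorial induction on the valuative shape of exponential terms (or, alternatively, reduce the one-variable terms to the counting estimate of Theorem~\ref{effective bound of system}); all the rest of the argument is routine verification of the hypotheses of Theorem~\ref{effective model completeness of L_F} and the harmless passage between $\mathcal{L}_{pEC}$ and $\mathcal{L}_{\widetilde{F}}$.
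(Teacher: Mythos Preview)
Your proposal is correct and follows essentially the same route as the paper: the theorem is obtained as a direct application of Theorem~\ref{effective model completeness of L_F} to $F=\{E_p\}$, after checking effectiveness of $F$, closure under derivation, convergence on a ball containing $\mathcal{O}_p$, invoking Macintyre's lemma for the effective $B(f)$-Weierstrass bound, and using the multiplicativity of $E_p$ to pass between $\mathcal{L}_{\widetilde{F}}$ and $\mathcal{L}_{pEC}$. In fact you spell out more of the routine verifications than the paper itself does; the paper's argument is literally ``So, by theorem~\ref{effective model completeness of L_F}'' once the preceding lemma is stated.
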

So, the decidability of the full theory of $Th(\Z_{pEC})$ or of the $p$-adic exponential ring is reduced to the decision problem for $\mathcal{L}_{PEC}$-existential formula i.e.\ is there an algorithm which determines the truth value of existential formula in $\Zp$.
In a subsequent paper, the author will solve this problem assuming a $p$-adic version of Schanuel's conjecture.

\begin{acknowledgement} The results of this paper were part of the author thesis. The author would like to thanks his thesis supervisor A. Wilkie. He also thanks F. Point and A. Macintyre for their support during the writing process. The author is very grateful to the referee for pointing out a mistake in an earlier version of the draft and many helpful comments an suggestions.
\end{acknowledgement}

\bibliographystyle{plain}
\bibliography{Biblio_p-adic}
\noindent Nathana\"el Mariaule\\
Universit\'e de Mons, Belgium\\
\emph{E-mail address: Nathanael.MARIAULE@umons.ac.be}

\end{document}